\documentclass[oneside,english]{amsart}
\usepackage[latin9]{inputenc}
\usepackage{float}
\usepackage{amsthm}
\usepackage{amssymb}
\usepackage{graphicx}

\makeatletter

\providecommand{\tabularnewline}{\\}

\numberwithin{equation}{section}
\numberwithin{figure}{section}
\theoremstyle{plain}

\newtheorem{thm}{\protect\theoremname}
\theoremstyle{definition}
 
\newtheorem{defn}[thm]{\protect\definitionname}
\theoremstyle{plain}

\newtheorem{prop}[thm]{\protect\propositionname}
\theoremstyle{plain}

\newtheorem{lem}[thm]{\protect\lemmaname}
\theoremstyle{plain}

\newtheorem{cor}[thm]{\protect\corollaryname}

\theoremstyle{remark}
\newtheorem{rem}[thm]{\protect\remarkname}

\makeatother

\usepackage{babel}
\providecommand{\corollaryname}{Corollary}
\providecommand{\definitionname}{Definition}
\providecommand{\lemmaname}{Lemma}
\providecommand{\propositionname}{Proposition}
\providecommand{\remarkname}{Remark}
\providecommand{\theoremname}{Theorem}

\begin{document}
\title{Arrow Spaces: An Approach to Inner Product Spaces and Affine Geometry}
\date{\today}
\author{Hussin Albahboh$^{1}$, Harry Gingold$^{2}$ and Jocelyn Quaintance$^{3}$}
\dedicatory{1-Department of Mathematics, West Virginia University, Morgantown,
WV 26506, USA. 3-Department of Computer Science, University of Pennsylvania,
Philadelphia, PA, 19104, USA}
\begin{abstract}
Given a postulated set of points, an algebraic system of axioms is proposed
for an ``arrow space'".  An arrow is defined to be
an ordered set of two points $\left<T, H\right>$, named respectively Tail and Head.
The set of arrows is an arrow space. 
The arrow space is axiomatically endowed with
an arrow space ``pre-inner product" which is analogous to the inner product of a
Euclidean vector space.
Using this arrow space pre-inner product, various properties of the arrow space are derived 
and contrasted with the properties of a Euclidean vector space.
The axioms of a vector space and its associated inner product are derived as
theorems that follow from the axioms of an arrow space since vectors are rigorously shown 
to be equivalence classes of arrows. Applications of using an arrow space to solve
geometric problems in affine geometry are provided. 
\end{abstract}

\maketitle
Keywords: Axiomatic Geometry; Point; Arrow; Pre-inner Product Space; Vectors;
Line; Plane; Inner Product Space; Euclidean Geometry; Hilbert Space; Affine Geometry.

AMS Subject Classification: 51M05; 15A63; 46C05.

\section{Introduction}
An arrow is a fundamental object of mathematics and physics that is manifested graphically as a line segment with a direction. Indeed, the first appearance of a an arrow heads back 
to about circa 62,000 years before the common era. Another fundamental concept of mathematics and physics is the vector.
 Vectors are abstract algebraic quantities that are often represented by arrows. In physics, there are applications where
 vectors are forces and the arrow is a pictorial portrayal of this vector force acting on a point mass. 
 We learn from Crow \cite{key-15} that in 1687 Issac Newton used the main diagonal of the parallelogram 
to represent the resultant of two forces, with the understanding that the addition operation of two such forces {\it must} be commutative. 
 This lead to the widely accepted requirement that the addition of vectors is commutative. But as we will soon see, the addition of arrows is not.

\medskip
Consider three ``points'' $A$,$B$, and $C$. Let an arrow be an ordered pair of ``points"
$\left<T,H\right>$, where $T$ stands for Tail and $H$ stands for Head.  
Two arrows can be added together if and only if the head of the first is equal to the tail of the second.
Consider the five {\it distinct} arrows $\overrightarrow{AB}$, $\overrightarrow{BC}$, $\overrightarrow{AC}$, 
$\overrightarrow{BA}$, $\overrightarrow{BB}$, and $\overrightarrow{AA}$. 
Define the addition operation $+_{A}$ of two arrows $\overrightarrow{AB}+_{A}\overrightarrow{BC}:=_A\overrightarrow{AC}$.
Notice that  $\overrightarrow{BC}+_{A}\overrightarrow{AB} \neq_{A} \overrightarrow{AC}$ since $\overrightarrow{BC}+_{A}\overrightarrow{AB}$ is
undefined. Thus $+_{A}$ is {\it not} commutative. The nature of this noncommutativity is subtle since both $\overrightarrow{AB}+_{A}\overrightarrow{BA}$ and $\overrightarrow{BA}+_{A}\overrightarrow{AB}$ are defined with $\overrightarrow{AB}+_{A}\overrightarrow{BA} =_{A} \overrightarrow{AA}$ and
$\overrightarrow{BA}+_{A}\overrightarrow{BB} =_{A} \overrightarrow{BB}$, yet $\overrightarrow{AA}\neq_{A}\overrightarrow{BB}$.
This difference in the addition operation highlights the fact that the vector, although geometrically represented by arrow, cannot and should not be construed to be the same entity as the arrow.
As we eventually see in Section \ref{sec:7.The-Equivalence-C}, the arrow is in fact a precursor of the vector since the vector represents an equivalence class of arrows.

\medskip
Although $+_A$ is limited to certain pairs of arrows, the addition of arrows, (when well defined), clearly encodes then starting point and the
ending point within its resulting summation.  As an example of the meaning of this sentence, 
take three points $A$, $B$, and $C$ and form Triangle $ABC$. We describe various paths around 
Triangle $ABC$ via arrow addition as show by the three paths illustrated in Figure 0.1.
\begin{align}
\label{[eq:NONCOMMUTATIVE]}
\overrightarrow{AB}+_{A}\overrightarrow{BC}+_{A}\overrightarrow{CA}&=_A\overrightarrow{AA},
\quad\overrightarrow{BA}+_{A}\overrightarrow{AC}+_{A}\overrightarrow{CA}=_A\overrightarrow{BB}\notag\\
\overrightarrow{CB}+_{A}\overrightarrow{BA}+_{A}\overrightarrow{AC}&=_A\overrightarrow{CC}.
\end{align}
\begin{figure}[H]
\label{introfig2}
\includegraphics[width=4.0in,height=2.0in]{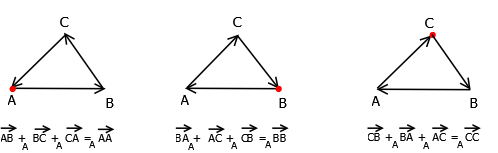}
\end{figure}

The first begins and ends at Vertex $A$, the second begins and ends at Vertex $B$, while the third begins and ends at Vertex $C$.
Vector addition does not provide such precise information.
Let $V$,$W$, and $Z$ be three vectors in a linear space over the field $\mathbb{R}$. Let $+_{V}$ be the addition operation between vectors. Let $0$ denote the identity element of the given linear space. 
The equation $V+_{V}W+_{V}Z=0$ is a manifestation of the three arrow relations in (\ref{[eq:NONCOMMUTATIVE]}). 
If the points $A$,$B$, and $C$ are all distinct, then the vector equation $V+_{V}W+_{V}Z=0$  
cannot tell which of the three relations in (\ref{[eq:NONCOMMUTATIVE]}) is our ultimate destination.
Such information is often useful.
Numerous problems in mathematical physics and in geometry make 
it imperative to know what is the first arrow in a sum of arrows and which one is the last since
the goal is to determine the final destination given an initial point of departure
 Our approach of an arrow as an ordered set of points $\left<T, H\right>$ provides the framework to specifically determine such a final destination. 
 
 \medskip
The collection of such ordered points, 
which we call an arrow space, also imparts a geometric framework to solve many problems of affine geometry without the necessity of a group action.  
This is because our definition of arrow is equivalent
to the approach in \cite{key-10}, \cite{key-20}, where an arrow 
$\overrightarrow{AB}$ representing a vector $V$ is obtained by $V$ acting on the point $A$ via translation, namely $B = A + V$. 
This action is the heart of the definition of an affine space, (see Definition 2.1 in \cite{key-20}), and mimics the action of a force on a point. 
The treatment of \cite{key-10}, \cite{key-20} is consistent with an axiom advanced in the first half of the 20th by K. O. Friedrichs \cite{key-21}, namely
given a vector $V$ and a point $A$ there exists a unique point $B$ such that the arrow $\overrightarrow{AB}$ corresponds to the vector $V$.
 
\medskip
One of the main goals of this article is to propose a rigorous axiomatic setting of arrow spaces and their properties which is not found in either current linear algebra textbooks or the vast collection of vector analysis literature listed by Crow \cite{key-15}. Our axiomatic treatment uses concepts and nomenclature from set theory, algebra, and inner product spaces. In this axiomatic treatment, vectors originate naturally from arrows rather than vice versa. The fundamental nature of arrows having length and direction is being brought out in full force by the modern notion of an arrow space pre-inner product defined on any pair of arrows. The axioms of the set of real numbers together with the supremum axiom are adopted in order guarantee that the real number line has no ``holes". Hence, calculus and its theorems are readily available for use when discussing properties of the arrow space and its associated pre-inner product. The modern set of axioms of vector analysis are derived  as theorems from our axioms of an arrow space since vectors are equivalence classes of arrows, where two arrows are equivalent if and only if they same length and same direction; the concept of same direction is
captured by requiring that a certain arrow pre-inner be equal to $1$. 

\medskip
The notion of a vector as an equivalence class of arrows which share the same length and direction is predated by 
Giusto Bellavitis. According to Crow \cite{key-15}, in 1835 Bellavitis publishes his first exposition on systems of equipollences.
This system has some features in common with the now traditional vector analysis, as is suggested in his definition of equipollent; two straight lines are called equipollent if they are equal, parallel and directed in the same sense. His lines behave in exactly the same manner as complex numbers behave, but it is important to note that he viewed his lines as essentially geometric entities, not as geometric representations of algebraic entities.


\medskip
We stress that the entities relied upon in our axiomatic treatment are algebraic. Geometric entities like point, line, ball, sphere, etc. are used  for the representation of algebraic entities and making them tangible. The algebraic axiomatic foundation of a linear vector space and of a Hilbert space have been scrutinized over numerous decades and withstood the test of time. Thus one may view this current work as an attempt to propose an expanded Euclidean geometry that is supplemented by arrows and vectors and has much in common with linear algebra, with inner product spaces, and with their confirmed foundation. 

\medskip
The order of work in this paper is as follows.
Section \ref{sec:Arrow-Spaces} rigorously defines the notion of an arrow space $\mathcal{P}_{A}$.
Starting with a postulated set of points $\mathcal{P}$, we define
arrows as ordered pairs of points $\overrightarrow{AB} =(A,B)$.
We say that two points $A,\;B$ are equal, denoted by $A=_{P}B$,
if $A$ and $B$ refer to a single point.  Based on this definition of equal
points, equal arrows can be defined as $\overrightarrow{AB}=_{A}\overrightarrow{CD}$
iff $A=_{P}C$ and $B=_{P}D$. An arrow addition, denoted
by $+_{A}$, is then introduced. Two arrows can be added if and only if they
have a point in common, namely
$\overrightarrow{AB}+_{A}\overrightarrow{BC}=_{A}\overrightarrow{AC}$.
The definition of $+_{A}$ implies if $A\neq_{P}B$, then
$\overrightarrow{AB}+_{A}\overrightarrow{BA}\neq_{A}\overrightarrow{BA}+_{A}\overrightarrow{AB}$.
Furthermore, motivated by the axioms and properties
of inner product spaces, we define an arrow pre-inner product, denoted
by $\left<-,-\right>_{A}$, on $\mathcal{P}_{A}$. This enables us to
define the measure of an arrow, denoted $||.||_{A}$,
an arrow scalar multiplication $(t)\;\overrightarrow{AB}$, a
line, betweenness of points, and more. 

\medskip
Sections \ref{sec:Comparing-Arrows-With} and  \ref{sec:Section4} are devoted to making 
comparisons between arrows spaces and vectors spaces.  Section \ref{sec:Comparing-Arrows-With}
emphasizes the differences between arrows and vectors by focusing on how the operation of arrow addition $+_{A}$
deviates from that of vector addition.  Surprisingly, arrow addition
is non-commutative.  A similar comparison is then applied to the respective operations of scalar multiplication. Section \ref{sec:Section4} focuses on the similarities between arrow spaces and vector spaces. We begin by showing that $+_{A}$, like its vector counterpart, is associative, and that both vector spaces and arrow spaces have the notion of an additive identity and an additive inverse.  We also show that arrow scalar multiplication is also associative and then go on to showcase those properties of arrow scalar multiplication which have analogs in terms of vector space scalar multiplication.

 \medskip
In Section \ref{sec:4.Eq.Clss} we use arrow scalar multiplication to define the notion of a line in an arrow space $\mathcal{P}_A$ and show that given any two distinct points $A$ and $B$, there exists
a unique line, denoted by $l_{AB}$, containing $A$ and $B$; see Theorem \ref{thm:WAS.Axiom (3)}. 
We then restrict our attention to the set of points that
lie on a line $l$, namely the set of points $\mathcal{P}_l$.  For $\mathcal{P}_{l_A}$, the arrow space associated
with the line $l$, we define
a relation $\Re_l$
as follows: we say that $\overrightarrow{AB}\;\Re_l\;\overrightarrow{CD}$
if and only if either $A=_{P}B$ and $C=_{P}D$, or
\[
||\overrightarrow{AB}||_{A}=||\overrightarrow{CD}||_{A}\,\,\, \text{and}\,\,\,
\left<\frac{\overrightarrow{AB}}{||\overrightarrow{AB}||_{A}},\;\frac{\overrightarrow{CD}}{||\overrightarrow{CD}||_{A}}\right>_{A}=1.
\]
From the second condition of this relation we can extract 
a definition of parallelism which mimics Euclid's notion of parallel lines; see Subsection 5.3.
But first we prove that $\Re_l$ is an equivalence relation by showing that
if $\overrightarrow{AB}\;\Re_l\;\overrightarrow{CD}$ and $\overrightarrow{EF}\;\Re_l\;\overrightarrow{GH}$, then
$\left<\overrightarrow{AB},\overrightarrow{EF}\right>_{A}=\left<\overrightarrow{CD},\overrightarrow{GH}\right>_{A}$; see Theorem
\ref{thm:WAS.AXIOM (5)}.
We then exploit the definition of a line and the equivalence relation $\Re_l$ to prove that 
given any arrow $\overrightarrow{AB}$ and any point $P$ in $P_{l_{A}}$, there exists
a unique parallel arrow $\overrightarrow{PK}$ (likewise a unique arrow $\overrightarrow{LP}$)
such that $\overrightarrow{AB}\;\Re_l\;\overrightarrow{PK}$ (likewise
$\overrightarrow{AB}\;\Re_l\;\overrightarrow{LP}$); see Theorem \ref{thm:WAS.AXIOM (6)}.

\medskip 
In Section \ref{sec:6.An-Equivalence-Relation} we return to an arbitrary arrow space $\mathcal{P}_A$ and directly define the relation $\Re$ in the context of $\mathcal{P}_A$; see Definition 42. But in order to prove that $\Re$ is an equivalence relation we have to postulate Theorem \ref{thm:WAS.AXIOM (5)} as Axiom 4. 
The obtained equivalence classes, denoted by $[\,-\,]$, will become vectors and turns $\mathcal{P}_A$ into a vector space
$\mathcal{P}_v$. Of course we need to show that $\mathcal{P}_v$ satisfies all the axioms of a vector space. This means we need to convert arrow addition into vector addition, $+_{V}$. Key to this conversion is the existence of a unique parallel arrow through a fixed given point, namely the analog of Theorem \ref{thm:WAS.AXIOM (5)}, which must now be postulated as Axiom 5.  By using Axiom 5 we define $+_{V}$ as follows: given two vectors $[\overrightarrow{AB}]$ and $[\overrightarrow{CD}]$, and an arbitrary point $P$, let $\overrightarrow{LP}$ be the unique arrow such that 
$\overrightarrow{AB}\;\Re\;\overrightarrow{LP}$, and let $\overrightarrow{PK}$ be the unique arrow such that 
$\overrightarrow{CD}\;\Re\;\overrightarrow{PK}$.
(The existence and uniqueness of the two arrows $\overrightarrow{LP}$
and $\overrightarrow{PK}$ are guaranteed by Axiom 5). Then 
\[
[\overrightarrow{AB}]+_{V}[\overrightarrow{CD}]=_{V}[\overrightarrow{LP}]+_{V}[\overrightarrow{PK}]=_{V}[\overrightarrow{LP}+_{A}\overrightarrow{PK}]=_{V}[\overrightarrow{LK}];
\]
see Figure \ref{figintro}.
\begin{figure}[ht]
\label{figintro}
\includegraphics[scale=0.5]{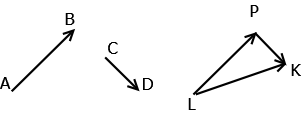}\caption{Adding equivalence classes of arrows.}
\end{figure}
Also arrow scalar multiplication is used to define a vector
scalar multiplication as follows:
\[
t\;[\overrightarrow{AB}]=_{V}[(t)\;\overrightarrow{AB}].
\]

\medskip
In Section \ref{sec:7.The-Equivalence-C} we prove that the set
of equivalence classes of arrows, namely $\mathcal{P}_{v}$, with
the operations of  vector addition and vector scalar multiplication as defined above fulfills all 
the axioms of vector space. Then in Section \ref{sectionaffinegeomex} we demonstrate how the tools of this article can be 
applicable to the field of affine geometry by solving two problems through the context of the vector space $\mathcal{P}_v$ associated with the arrow space $\mathcal{P}_A$.
 The first problem is to show that for a given line $l_{OG}$, for every
point $P\notin l_{OG}$, there exists a unique point $W\in l_{OG}$
such that $\left<\overrightarrow{WO},\overrightarrow{WP}\right>_{A}=0$; see Theorem
\ref{thm:POINT.LINE}.  Then the Cauchy Schwartz inequality for arrows spaces is a corollary of 
Theorem \ref{thm:POINT.LINE}.
The other application is related to the barycentric coordinates of an affine space 
[\cite{key-10}, Page 22]. We show that given a set $\{P_{i}\}_{i=1}^{n}$ of
distinct points in $\mathcal{P}$ and a finite set of real numbers
$\{\lambda_{i}\}_{i=1}^{n}$ such that $\sum_{i=1}^{n}\lambda_{i}=1$,
for a fixed coordinate free origin $O$, there exists a unique
point $M$ such that $\sum_{i=1}^{n}[(\lambda_{i})\overrightarrow{OP_{i}}]=_{V}[\overrightarrow{OM}]$. Furthermore,
the point $M$ is independent from the choice of the origin $O$.

\section{\label{sec:Arrow-Spaces}Arrow Spaces}

In this section we rigorously define the notion of an arrow and an
arrow space. The definition of an arrow and its associated arrow space
depends on a postulated set of points.

\medskip
\noindent {\bf Axiom 0}. There exists a set of points $\mathcal{P}$.

\medskip
We will label individual points with Roman letters and with a slight
abuse of notation denote $\mathcal{P}=\{A,\;B,\;C,...\}$. This labeling
convention will allow us to denote equality among the elements of
$\mathcal{P}$.
\begin{defn}
Let $\mathcal{P}=\{A,\;B,\;C,...\}$ be a set of points. Let $A,\;B\in\mathcal{P}$.
We define $A=_{P}B$ if and only if $A$ and $B$ refer to a single point. Otherwise,
we write $A\neq_{P}B$ meaning that $A$ and $B$ refer to two distinct
points.
\end{defn}

We can now define an arrow as an ordered pair of points.
\begin{defn}
\label{def:ArrowDef.}Let $\mathcal{P}=\{A,\;B,\;C,...\}$ be a set
of points. Let

\[
\mathcal{P\times P}=\{(A,B)\;|\;A,\;B\in\mathcal{P}\}
\]
be the Cartesian product of $\mathcal{P}$. Given any two points $A,\;B\in\mathcal{P}$,
we define an {\it arrow}, denoted by $\overrightarrow{AB}$, to be the ordered
pair $(A,B)$. The two points $A$ and $B$ are the {\it tail} and the {\it head}
of the arrow $\overrightarrow{AB}$ respectively. If $A=_{P}B$,
then $(A,B)=(A,A)$ and we denote the associated arrow by $\overrightarrow{AA}$.
We call the set of all arrows, whose tails and heads are the points
of a set $\mathcal{P}$, an {\it arrow space} and denote it by $\mathcal{P}_{A}$.
\end{defn}

In the following definition we define equality among arrows.
\begin{defn}
\label{def:equal arrows}Let $\overrightarrow{AB},\;\overrightarrow{CD}$
be two arrows in $\mathcal{P}_{A}$. We put $\overrightarrow{AB}=_{A}\overrightarrow{CD}$
if and only if $A=_{P}C$ and $B=_{P}D$. If $A\neq_{P}C$ or $B\neq_{P}D$,
we say that the two arrows are different and write $\overrightarrow{AB}\neq_{A}\overrightarrow{CD}$.
\end{defn}

Next comes a technical definition, the negation of an arrow.
\begin{defn}
\label{def:(-)Minuse.arrowAB}Given the arrow $\overrightarrow{AB}$,
we define $-\;\overrightarrow{AB}$ as $-\;\overrightarrow{AB}=\overrightarrow{BA}$; see Figure 3.1.
\end{defn}

Note that if $A\neq_{P}B$, then $\overrightarrow{AB}\neq_{A}-\;\overrightarrow{AB}$.

\medskip
In order to use arrows as a tool for solving various problems in affine
geometry, and since arrows are actually precursors to vectors (see Section \ref{sec:6.An-Equivalence-Relation}),
we want to be able to manipulate them in a way that is reminiscent
of the way we manipulate vectors. This means we need to define
the binary operation of arrow addition and the notion of scalar multiplication
acting on an arrow. Arrow addition is rather straightforward
as seen by the following definition.
\begin{defn}
\label{def:ARROWSADDITIONHEADTAIL}Let $\overrightarrow{AB}$ and $\overrightarrow{BC}$
be any two arrows in $\mathcal{P}_{A}$. We define {\it arrow addition}, denoted
by $+_{A}$, of $\overrightarrow{AB}$ and $\overrightarrow{BC}$
as $\overrightarrow{AB}+_{A}\overrightarrow{BC}=_{A}\overrightarrow{AC}$.

\begin{figure}[ht]
\includegraphics[scale=0.3]{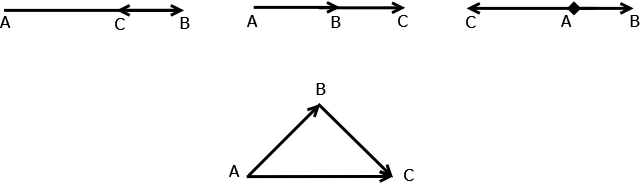}
\caption{\label{fig:AB+BC=00003DAC}An illustration of Definition \ref{def:ARROWSADDITIONHEADTAIL} where  $\protect\overrightarrow{AB}+_{A}\protect\overrightarrow{BC}=_{A}\protect\overrightarrow{AC}.$}
\end{figure}
\end{defn}

However, to define the notion scalar multiplication in an arrow space
we need metric notions of length and distance, along with the Euclidean
notion of an angle. These crucial notions are algebraically captured
by a postulated arrow pre-inner product, namely a symmetric, positive definite
``bilinear'' mapping $\left<-,-\right>_{A}:\mathcal{P}_{A}\times\mathcal{P}_{A}\longrightarrow\mathbb{R}$.
This arrow pre-inner product will be a tool for defining the measure of
an arrow, the definition of a line, and notion of betweenness for points that lie on
a line. 

\medskip
\noindent {\bf Axiom 1}. There exists a mapping  $\left<-,-\right>_{A}:\mathcal{P}_{A}\times\mathcal{P}_{A}\longrightarrow\mathbb{R}$
such that
\begin{itemize}
\item[1. ] ({\it positive definiteness})

\begin{equation}
\left<\overrightarrow{AB},\overrightarrow{AB}\right>_{A}\geq0\;\;
\text{and}\;\left<\overrightarrow{AB},\overrightarrow{AB}\right>_{A}=0\;\text{if and only if}\;A=_{P}B,\label{eq:AXIOMz}
\end{equation}

\item[2.] ({\it symmetry})
\begin{equation}
\left<\overrightarrow{AB},\overrightarrow{CD}\right>_{A}=\left<\overrightarrow{CD},\overrightarrow{AB}\right>_{A},\label{eq:AXIOMb}
\end{equation}

\item[3.] ({\it arrow addition linearity})
\begin{equation}
\left<\overrightarrow{AB}+_{A}\overrightarrow{BC},\overrightarrow{LM}\right>_{A} =
\left<\overrightarrow{AB},\overrightarrow{LM}\right>_{A} +
 \left<\overrightarrow{BC},\overrightarrow{MR}\right>_{A},\label{eq:AXIOMc0}
\end{equation}

\item[4.]({\it negation rule})
\begin{equation}
\left<-\;\overrightarrow{AB},\overrightarrow{CD}\right>_{A}= (-1)\;\left<\overrightarrow{AB},\overrightarrow{CD}\right>_{A}.\label{eq:A01-1a}
\end{equation}

\end{itemize}

Regarding Axiom 1, we make two observations.
First, Equation (\ref{eq:AXIOMb}), when combined with Equation (\ref{eq:AXIOMc0}), implies that
\begin{align}
\left<\overrightarrow{AB}+_{A}\overrightarrow{BC},\overrightarrow{LM}+_{A}\overrightarrow{MR}\right>_{A} 
& =\left<\overrightarrow{AB},\overrightarrow{LM}\right>_{A}+\left<\overrightarrow{AB},\overrightarrow{MR}\right>_{A}\notag\\
 &+ \left<\overrightarrow{BC},\overrightarrow{LM}\right>_{A}+\left<\overrightarrow{BC},\overrightarrow{MR}\right>_{A}.\label{eq:AXIOMc}
 \end{align}
Secondly, Equation (\ref{eq:AXIOMb}), when combined with Equation (\ref{eq:A01-1a}), implies that
\begin{equation}  
 \left<\overrightarrow{AB},-\;\overrightarrow{CD}\right>_{A}
 =(-1)\;\left<\overrightarrow{AB},\overrightarrow{CD}\right>_{A} = \left<-\;\overrightarrow{AB},\overrightarrow{CD}\right>_{A}. \label{eq:A01-1}
\end{equation}

\medskip
Intuitively, we associate $\overrightarrow{AA}$ with the ``zero''
arrow, one of the infinitely many ``zero" arrows. Thus we would like pre-inner product of Axiom 1 to behave
correctly with respect to zero, namely that $\left<\overrightarrow{AA},-\right>=0$.
This is indeed the case as evidenced by the following proposition.
\begin{prop}
\label{prop:AACDZERO}For any two arrows $\overrightarrow{AA}$ and $\overrightarrow{CD}$ of 
$\mathcal{P}_A$, we have $\left<\overrightarrow{AA},\overrightarrow{CD}\right>_{A}=0$.
\end{prop}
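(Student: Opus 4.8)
The plan is to exploit the fact that the arrow $\overrightarrow{AA}$ is its own negation and then invoke the negation rule of Axiom 1. By Definition \ref{def:(-)Minuse.arrowAB} we have $-\,\overrightarrow{AA}=\overrightarrow{AA}$, since reversing the tail and head of $\overrightarrow{AA}$ reproduces the very same arrow. This single structural observation is what drives the entire argument.

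With this in hand, I would apply the negation rule (Equation (\ref{eq:A01-1a})) to the pair $\overrightarrow{AA}$ and $\overrightarrow{CD}$, obtaining $\left<-\,\overrightarrow{AA},\overrightarrow{CD}\right>_{A}=(-1)\left<\overrightarrow{AA},\overrightarrow{CD}\right>_{A}$. Substituting $-\,\overrightarrow{AA}=\overrightarrow{AA}$ on the left-hand side gives $\left<\overrightarrow{AA},\overrightarrow{CD}\right>_{A}=(-1)\left<\overrightarrow{AA},\overrightarrow{CD}\right>_{A}$. Adding $\left<\overrightarrow{AA},\overrightarrow{CD}\right>_{A}$ to both sides yields $2\left<\overrightarrow{AA},\overrightarrow{CD}\right>_{A}=0$, and since the pre-inner product takes values in $\mathbb{R}$, this forces $\left<\overrightarrow{AA},\overrightarrow{CD}\right>_{A}=0$, as desired.

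An equally short alternative route uses arrow-addition linearity instead of the negation rule. Because the head of $\overrightarrow{AA}$ equals the tail of $\overrightarrow{AA}$, Definition \ref{def:ARROWSADDITIONHEADTAIL} gives the idempotence $\overrightarrow{AA}+_{A}\overrightarrow{AA}=_{A}\overrightarrow{AA}$. Feeding this into linearity of $\left<-,-\right>_{A}$ in its first argument (Axiom 1, part 3, Equation (\ref{eq:AXIOMc0})) produces $\left<\overrightarrow{AA},\overrightarrow{CD}\right>_{A}=\left<\overrightarrow{AA}+_{A}\overrightarrow{AA},\overrightarrow{CD}\right>_{A}=2\left<\overrightarrow{AA},\overrightarrow{CD}\right>_{A}$, whence again $\left<\overrightarrow{AA},\overrightarrow{CD}\right>_{A}=0$. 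I would lead with the negation argument, since it is the most self-contained.

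I do not anticipate a genuine obstacle; the result is essentially immediate once the right structural fact is isolated. The only point requiring any care is recognizing that $\overrightarrow{AA}$ coincides with its own negation, which is precisely what makes the coefficient $(-1)$ supplied by the negation rule collapse the pre-inner product to zero. Notably, positive definiteness plays no role here: the proposition is purely a manifestation of the linear behavior of $\left<-,-\right>_{A}$ at a zero arrow.
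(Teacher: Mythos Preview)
Your proof is correct, and both of your routes are genuinely more economical than the paper's. The paper instead decomposes \emph{both} arrows through auxiliary points: it writes $\overrightarrow{AA}=_{A}\overrightarrow{AD}+_{A}\overrightarrow{DA}$ and $\overrightarrow{CD}=_{A}\overrightarrow{CA}+_{A}\overrightarrow{AD}$, expands $\left<\overrightarrow{AA},\overrightarrow{CD}\right>_{A}$ into four terms via bilinearity (Equation (\ref{eq:AXIOMc})), and then uses the negation rule to cancel those terms in pairs. Your first argument bypasses all of this by recognizing at the outset that $\overrightarrow{AA}$ is its own negation, so a single application of Equation (\ref{eq:A01-1a}) already forces the value to equal its negative; your second argument similarly exploits the idempotence $\overrightarrow{AA}+_{A}\overrightarrow{AA}=_{A}\overrightarrow{AA}$ to get the same ``$x=2x$'' collapse. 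What the paper's longer route buys is perhaps pedagogical: it exercises the full bilinear expansion in a concrete case before that machinery is used heavily later. What your approach buys is clarity about \emph{why} the result holds---it isolates the single structural fact (self-negation or self-summation of a zero arrow) that is really doing the work.
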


\begin{proof}
Let $\overrightarrow{AA}$ and $\overrightarrow{CD}$ be any two arrows.
Since
\[
\overrightarrow{AA}=_{A}\overrightarrow{AD}+_{A}\overrightarrow{DA}\qquad\text{and}\qquad
\;\overrightarrow{CD}=_{A}\overrightarrow{CA}+_{A}\overrightarrow{AD},
\]
Definition \ref{def:(-)Minuse.arrowAB} and Equations (\ref{eq:AXIOMc}) and (\ref{eq:A01-1})  imply that 

\begin{align*}
\left<\overrightarrow{AA},\overrightarrow{CD}\right>_{A} & =\left<\overrightarrow{AD}+_{A}\overrightarrow{DA},\overrightarrow{CA}+_{A}\overrightarrow{AD}\right>_{A}\\
 & =\left<\overrightarrow{AD},\overrightarrow{CA}\right>_{A}+\left<\overrightarrow{AD},\overrightarrow{AD}\right>_{A}+\left<\overrightarrow{DA},\overrightarrow{CA}\right>_{A}+\left<\overrightarrow{DA},\overrightarrow{AD}\right>_{A}\\
 & =\left<\overrightarrow{AD},\overrightarrow{CA}\right>_{A}+\left<\overrightarrow{AD},\overrightarrow{AD}\right>_{A}-\left<\overrightarrow{AD},\overrightarrow{CA}\right>_{A}-\left<\overrightarrow{AD},\overrightarrow{AD}\right>_{A}=0.
\end{align*}
\end{proof}

The arrow pre-inner product of Axiom 1 provides a way of defining the
measure (or length) of any arrow.
\begin{defn}
\label{def:MeasureOf-any-arrow}For any arrow $\overrightarrow{AB} \in \mathcal{P}_A$,
we define a {\it measure}, denoted $||-||_{A}$, as follows: $||\overrightarrow{AB}||_{A}=\sqrt{\left<\overrightarrow{AB},\overrightarrow{AB}\right>_{A}}$. 
If $||\overrightarrow{AB}||_{A}=1$, then we call this arrow a {\it unit
arrow}.
\end{defn}

The following lemma is an immediate consequence of Equation (\ref{eq:AXIOMz})
and Definition \ref{def:MeasureOf-any-arrow}.
\begin{lem}
\label{lem:||AB||=00003D0 IFF}For any arrow $\overrightarrow{AB}\in \mathcal{P}_A$,
we have $||\overrightarrow{AB}||_{A}=0$ if and only if $A=_{P}B$.
\end{lem}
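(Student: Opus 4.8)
The plan is to unwind Definition \ref{def:MeasureOf-any-arrow} and read the conclusion directly off the positive-definiteness clause of Axiom 1. Since the measure is defined as a square root of the self pre-inner product, the entire argument is a short chain of equivalences requiring no genuine computation.

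First I would record that, by Definition \ref{def:MeasureOf-any-arrow}, $||\overrightarrow{AB}||_{A}=\sqrt{\left<\overrightarrow{AB},\overrightarrow{AB}\right>_{A}}$, and that the radicand is a nonnegative real number by the first inequality in Equation (\ref{eq:AXIOMz}); this is what guarantees the square root is a well-defined real number in the first place. Next I would invoke the elementary fact that for a nonnegative real number $x$ one has $\sqrt{x}=0$ if and only if $x=0$, which yields $||\overrightarrow{AB}||_{A}=0$ if and only if $\left<\overrightarrow{AB},\overrightarrow{AB}\right>_{A}=0$. Finally I would apply the ``if and only if $A=_{P}B$'' half of Equation (\ref{eq:AXIOMz}) to conclude that $\left<\overrightarrow{AB},\overrightarrow{AB}\right>_{A}=0$ exactly when $A=_{P}B$; chaining the two equivalences gives the lemma.

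There is no real obstacle here: the only point requiring any care is the legitimacy of the square root, which is precisely why the nonnegativity half of positive definiteness is invoked before the equivalence $\sqrt{x}=0\iff x=0$ can be applied. Everything else is an immediate transcription of Axiom 1, consistent with the remark preceding the statement that the lemma follows at once from Equation (\ref{eq:AXIOMz}) and Definition \ref{def:MeasureOf-any-arrow}.
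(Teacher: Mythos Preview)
Your proposal is correct and matches the paper's approach exactly: the paper does not even write out a proof, merely noting that the lemma is an immediate consequence of Equation (\ref{eq:AXIOMz}) and Definition \ref{def:MeasureOf-any-arrow}, which is precisely the chain of equivalences you spell out.
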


We take advantage of the arrow pre-inner product and the measure to define the notion of scalar
multiplication in an arrow space. To avoid confusion with the negation operation of Definition \ref{def:(-)Minuse.arrowAB},
we will {\it always} surround the scalar multiple with parenthesis.
\begin{defn} \label{def:(Scalar-Multiplication-of}Let $\overrightarrow{AB}$
be any arrow in $\mathcal{P}_{A}$ and $t\in\mathbb{R}$. If $A=_{P}B$,
or $t=0$, we put $(t)\overrightarrow{AB}=\overrightarrow{AA}$ .
If $A\neq_{P}B$ and $t\neq0$, we put $(t)\overrightarrow{AB}=_{A}\overrightarrow{AD}$,
where $D$ is a point in $\mathcal{P}$ such that\\

\begin{itemize}
\item[1.] $||\overrightarrow{AD}||_{A}=|t|\;||\overrightarrow{AB}||_{A}$,\\

\item[2.]
$\left<\overrightarrow{AB},\overrightarrow{AD}\right>_{A}
= ||\overrightarrow{AB}||_{A}||\overrightarrow{AD}||_{A}$
if $t>0$,\\ and $\left<\overrightarrow{AB},\overrightarrow{AD}\right>_{A}
= -||\overrightarrow{AB}||_{A}||\overrightarrow{AD}||_{A}$
if $t<0$.
\end{itemize}
\end{defn}

Observe that Definition \ref{def:(Scalar-Multiplication-of} only
deals with the existence of the point $D$. We will see in Section
\ref{sec:4.Eq.Clss} that this $D$ is in fact unique; see Theorem \ref{thm:WAS.AXIOM (6)}.

\medskip The arrow pre-inner product, along with Definition \ref{def:(Scalar-Multiplication-of} (2), suggests the following definition which is an
algebraic quantification for when two arrow have the same direction.
\begin{defn}
\label{arrowdirdef}
Let  $A$, $B$, $C$, and $D$ be four distinct points of $\mathcal{P}$. We say $\overrightarrow{AB}$ has the {\it same direction} as 
$\overrightarrow{CD}$ if and only if $\left<\overrightarrow{AB},\overrightarrow{AD}\right>_{A}
= ||\overrightarrow{AB}||_{A}||\overrightarrow{AD}||_{A}$, or equivalently if and only if 
$\left<\frac{\overrightarrow{AB}}{|\overrightarrow{AB}||_{A}},\frac{\overrightarrow{CD}}{|\overrightarrow{CD}||_{A}}\right> = 1$.
We say $\overrightarrow{AB}$ has the {\it opposite direction} as 
$\overrightarrow{CD}$ if and only if $\left<\overrightarrow{AB},\overrightarrow{AD}\right>_{A}
= -||\overrightarrow{AB}||_{A}||\overrightarrow{AD}||_{A}$, or equivalently if and only if 
$\left<\frac{\overrightarrow{AB}}{|\overrightarrow{AB}||_{A}},\frac{\overrightarrow{CD}}{|\overrightarrow{CD}||_{A}}\right> = -1$.
Let $O$, $P$, $Q$, and $R$ be four (not necessarily distinct) points of $\mathcal{P}$. We say $\overrightarrow{OP}$ is {\it perpendicular} to, or 
forms a {right angle} with $\overrightarrow{QR}$ if and only if $\left<\overrightarrow{OP},\overrightarrow{QR}\right>_{A} = 0$.
\end{defn}

\begin{rem}
Definition \ref{arrowdirdef} implicitly measures when ``angle" between two nontrivial arrows is zero (same direction), when it is
$\pi$ (opposite direction), and when it is $\pi/2$ (right angle). 
These are the only three situations of angle measurement that are necessary for the axiomatic presentation of an arrow space presented 
in this paper.
A thorough treatment of angles between two arrows is discussed in our LAA paper and in the Gingold/Salah paper.
\end{rem}

\medskip
Since we now have the notion of scalar multiplication, we require
that the arrow pre-inner product of Axiom 1 behaves correctly with respect
to this operation. In other words, we have the following axiom.

\medskip
\noindent {\bf Axiom 2}. ({\it scalar multiplication linearity})
For $\overrightarrow{AB}$ and $\overrightarrow{CD}$
any two arrows of $\mathcal{P}_A$ and $t\in\mathbb{R}$, we have
\begin{equation}
\left<(t)\;\overrightarrow{AB},\overrightarrow{CD}\right>_{A} = t\left<\overrightarrow{AB},\overrightarrow{CD}\right>_{A}.\label{eq:A01s}
\end{equation}

Observe that Equation (\ref{eq:A01s}), when combined with Equation (\ref{eq:AXIOMb}), implies that
\begin{equation}
\left<(t)\;\overrightarrow{AB},(s)\;\overrightarrow{CD}\right>_{A}=t\;s\left<\overrightarrow{AB},\overrightarrow{CD}\right>_{A},\label{eq:A01}
\end{equation}
a fact that will be extremely useful for the proofs in the paper.

\medskip
The following lemma shows the relationship between length, scalar multiplication, and negation.
\begin{lem}
\label{lem:tAB}For any arrow $\overrightarrow{AB}\in \mathcal{P}_A$ and any
$t\in\mathbb{R}$ we have

\begin{itemize}
\item[1.]
\begin{equation}
||(t)\;\overrightarrow{AB}||_{A}=|t|\;||\overrightarrow{AB}||_{A},\label{eq:LEM00}
\end{equation}

\item[2.]
\begin{equation}
||\overrightarrow{AB}||_{A}=||\overrightarrow{BA}||_{A}.\label{eq:LEM11}
\end{equation}
\end{itemize}
\end{lem}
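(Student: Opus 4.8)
The plan is to reduce both statements to the defining formula $||\overrightarrow{AB}||_{A}=\sqrt{\left<\overrightarrow{AB},\overrightarrow{AB}\right>_{A}}$ of Definition \ref{def:MeasureOf-any-arrow}, and then to evaluate the resulting self-pre-inner-products using the bilinearity and negation identities already recorded in Axioms 1 and 2. In both cases the measure is nonnegative, so taking the positive square root is unambiguous.

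For Part 1, I would begin by writing
\[
||(t)\;\overrightarrow{AB}||_{A}=\sqrt{\left<(t)\;\overrightarrow{AB},(t)\;\overrightarrow{AB}\right>_{A}}.
\]
Applying Equation (\ref{eq:A01}) with $s=t$ and $\overrightarrow{CD}=\overrightarrow{AB}$ gives $\left<(t)\;\overrightarrow{AB},(t)\;\overrightarrow{AB}\right>_{A}=t^{2}\left<\overrightarrow{AB},\overrightarrow{AB}\right>_{A}$. Extracting the square root and using $\sqrt{t^{2}}=|t|$ together with Definition \ref{def:MeasureOf-any-arrow} then yields $||(t)\;\overrightarrow{AB}||_{A}=|t|\;||\overrightarrow{AB}||_{A}$. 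I note that this single computation covers every case uniformly: since Axiom 2, and hence the derived Equation (\ref{eq:A01}), is asserted for \emph{all} arrows and \emph{all} real $t$, no separate treatment of the degenerate instances $t=0$ or $A=_{P}B$ is required, each of which simply forces both sides to vanish.

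For Part 2, I would invoke Definition \ref{def:(-)Minuse.arrowAB} to replace $\overrightarrow{BA}$ by $-\;\overrightarrow{AB}$ and then apply the negation rule twice. Concretely,
\[
\left<\overrightarrow{BA},\overrightarrow{BA}\right>_{A}=\left<-\;\overrightarrow{AB},-\;\overrightarrow{AB}\right>_{A}=(-1)(-1)\left<\overrightarrow{AB},\overrightarrow{AB}\right>_{A}=\left<\overrightarrow{AB},\overrightarrow{AB}\right>_{A},
\]
where the middle step uses Equation (\ref{eq:A01-1a}) on the first slot and Equation (\ref{eq:A01-1}) on the second. Extracting square roots via Definition \ref{def:MeasureOf-any-arrow} then gives $||\overrightarrow{BA}||_{A}=||\overrightarrow{AB}||_{A}$.

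Neither part presents a serious obstacle, as both arguments are direct consequences of the axioms; the only point demanding care arises in Part 2. It is tempting to shortcut the proof by writing $\overrightarrow{BA}=(-1)\;\overrightarrow{AB}$ and appealing to Part 1 with $t=-1$. I would deliberately resist this, because at this stage of the paper the equality between the negation $-\;\overrightarrow{AB}$ of Definition \ref{def:(-)Minuse.arrowAB} and the scalar multiple $(-1)\;\overrightarrow{AB}$ of Definition \ref{def:(Scalar-Multiplication-of} has not yet been established. Routing Part 2 through the negation rule (item 4 of Axiom 1) instead keeps the argument self-contained and sidesteps this unproven identification.
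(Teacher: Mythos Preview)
Your proposal is correct and follows essentially the same route as the paper: both parts compute the squared measure via Definition \ref{def:MeasureOf-any-arrow}, invoke Equation (\ref{eq:A01}) for Part 1 and the negation rule (\ref{eq:A01-1a})/(\ref{eq:A01-1}) for Part 2, and then take positive square roots. Your caution against conflating $-\;\overrightarrow{AB}$ with $(-1)\;\overrightarrow{AB}$ is well-placed and consistent with the paper's later Proposition \ref{prop:(-1)AB.NOT=00003D-AB}.
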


\begin{proof}
1) Definition \ref{def:MeasureOf-any-arrow} and Equation (\ref{eq:A01}) imply that
\[
||(t)\;\overrightarrow{AB}||_{A}^{2}  = \left<(t)\;\overrightarrow{AB},(t)\;\overrightarrow{AB}\right>_{A}
=t^{2}\left<\overrightarrow{AB},\overrightarrow{AB}\right>_{A}=t^{2}||\overrightarrow{AB}||_{A}^{2}.
\]

Taking the positive square root of both sides preceding equation gives Equation (\ref{eq:LEM00}).  

\medskip
 2) Definitions \ref{def:(-)Minuse.arrowAB} and \ref{def:MeasureOf-any-arrow}, along with Equation (\ref{eq:A01-1}), imply that 
\[
||\overrightarrow{BA}||_{A}^{2}  = \left<\overrightarrow{BA},\overrightarrow{BA}\right>_{A}
 =\left<-\;\overrightarrow{AB},-\;\overrightarrow{AB}\right>_{A}
 =(-1)\;(-1)\left<\overrightarrow{AB},\overrightarrow{AB}\right>_{A}
 =||\overrightarrow{AB}||_{A}^{2}.
\]
If we take the positive square root of both sides of the preceding equation we obtain Equation (\ref{eq:LEM11}).
\end{proof}

We end this section with a theorem which relates scalar multiplication with equality of arrows.
\begin{thm}
\label{thm:aABbAB}Let $\overrightarrow{AB}$ be an arrow in $\mathcal{P}_A$ such that
$A\neq_{P}B$. If $(a)\;\overrightarrow{AB}=_{A}(b)\;\overrightarrow{AB}$
for some real numbers $a$ and $b$, then we must have $a=b$.
\end{thm}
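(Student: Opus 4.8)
The plan is to pair both sides of the hypothesized equality with the arrow $\overrightarrow{AB}$ itself and then invoke scalar-multiplication linearity together with positive definiteness. The key observation is that the pre-inner product $\left<-,-\right>_{A}$ of Axiom 1 is a genuine mapping on $\mathcal{P}_{A}\times\mathcal{P}_{A}$ into $\mathbb{R}$, so it respects the arrow equality $=_{A}$; consequently, from the hypothesis $(a)\;\overrightarrow{AB}=_{A}(b)\;\overrightarrow{AB}$ one immediately obtains
\[
\left<(a)\;\overrightarrow{AB},\overrightarrow{AB}\right>_{A}=\left<(b)\;\overrightarrow{AB},\overrightarrow{AB}\right>_{A}.
\]

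Next I would apply Axiom 2, namely Equation (\ref{eq:A01s}), to each side in order to extract the scalars, which yields
\[
a\left<\overrightarrow{AB},\overrightarrow{AB}\right>_{A}=b\left<\overrightarrow{AB},\overrightarrow{AB}\right>_{A},
\]
equivalently $(a-b)\left<\overrightarrow{AB},\overrightarrow{AB}\right>_{A}=0$. Since $\left<\overrightarrow{AB},\overrightarrow{AB}\right>_{A}=||\overrightarrow{AB}||_{A}^{2}$ by Definition \ref{def:MeasureOf-any-arrow}, and since the assumption $A\neq_{P}B$ forces $\left<\overrightarrow{AB},\overrightarrow{AB}\right>_{A}>0$ by the positive definiteness condition (\ref{eq:AXIOMz}), this scalar factor is nonzero. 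Dividing by it gives $a-b=0$, that is, $a=b$.

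There is no serious obstacle here; the only point that deserves to be spelled out is the very first step, where one must justify that feeding two arrows known to be equal into the pre-inner product returns equal real numbers. This is automatic, because $\left<-,-\right>_{A}$ is postulated in Axiom 1 as a function into $\mathbb{R}$ and any function sends equal inputs to equal outputs; in particular no appeal to the still-unproven uniqueness of the point $D$ in Definition \ref{def:(Scalar-Multiplication-of} is required. I would also remark that the tempting alternative route through the measure, comparing $||(a)\;\overrightarrow{AB}||_{A}=|a|\,||\overrightarrow{AB}||_{A}$ with $||(b)\;\overrightarrow{AB}||_{A}=|b|\,||\overrightarrow{AB}||_{A}$ via Lemma \ref{lem:tAB}, only delivers $|a|=|b|$ and then demands extra casework on the signs of $a$ and $b$ (using Definition \ref{def:(Scalar-Multiplication-of} part 2) to rule out the spurious possibility $a=-b$; the inner-product argument above sidesteps this casework entirely and is therefore the cleaner path.
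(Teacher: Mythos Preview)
Your proof is correct and uses essentially the same ingredients as the paper's argument---Axiom~2 (scalar-multiplication linearity), Definition~\ref{def:MeasureOf-any-arrow}, and positive definiteness---but your execution is cleaner. The paper pairs $(a)\,\overrightarrow{AB}$ with \emph{itself} and then substitutes one factor by $(b)\,\overrightarrow{AB}$, obtaining $a^{2}\,||\overrightarrow{AB}||_{A}^{2}=ab\,||\overrightarrow{AB}||_{A}^{2}$; cancelling the norm gives $a^{2}=ab$, which only yields $a=b$ after dividing by $a$, so the paper must first dispose of the boundary case $a=0$ (and implicitly $b=0$) separately via Definition~\ref{def:(Scalar-Multiplication-of}. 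By contrast, you pair with the fixed arrow $\overrightarrow{AB}$, obtain $a\left<\overrightarrow{AB},\overrightarrow{AB}\right>_{A}=b\left<\overrightarrow{AB},\overrightarrow{AB}\right>_{A}$ directly, and divide by the strictly positive quantity $\left<\overrightarrow{AB},\overrightarrow{AB}\right>_{A}$; no case split on the vanishing of $a$ or $b$ is needed. Your explicit remark that the well-definedness of $\left<-,-\right>_{A}$ as a function handles the substitution step without appealing to the (not-yet-established) uniqueness of the point $D$ in Definition~\ref{def:(Scalar-Multiplication-of} is also a point the paper leaves implicit.
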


\begin{proof}
Let $\overrightarrow{AB}$ be an arrow such that $A\neq_{P}B$ and
$(a)\;\overrightarrow{AB}=_{A}(b)\;\overrightarrow{AB}$ for some real
numbers $a$ and $b$. If $a=0$, then it follows by Definition \ref{def:(Scalar-Multiplication-of}
that $(a)\;\overrightarrow{AB}=_{A}(0)\;\overrightarrow{AB}=_{A}\overrightarrow{AA} =
(b)\;\overrightarrow{AB}$. Since
$A\neq_{P}B$,
it follows again by Definition \ref{def:(Scalar-Multiplication-of} that
$b=0$, which means that $a=b$. Now suppose that $a\neq0$ and $b\neq0$.
Then Definition \ref{def:MeasureOf-any-arrow} implies that
\begin{equation}
||(a)\;\overrightarrow{AB}||_{A}^{2}=\left<(a)\;\overrightarrow{AB},(a)\;\overrightarrow{AB}\right>_{A}.\label{eq:m}
\end{equation}
Lemma \ref{lem:tAB} implies that $||(a)\;\overrightarrow{AB}||_{A}^{2}=|a|^{2}\;||\overrightarrow{AB}||_{A}^{2}$.
Also, it is given that $(a)\;\overrightarrow{AB}=_{A}(b)\;\overrightarrow{AB}$.
Thus, the Equation (\ref{eq:m}) can be rewritten as 
\[
|a|^{2}\;||\overrightarrow{AB}||_{A}^{2}=\left<(a)\;\overrightarrow{AB},(b)\;\overrightarrow{AB}\right>_{A}
= ab\;\left<\overrightarrow{AB},\overrightarrow{AB}\right>_{A}.
\]
where the last equality follows from by Equation (\ref{eq:A01}).
Since $\left<\overrightarrow{AB},\overrightarrow{AB}\right>_{A}=||\overrightarrow{AB}||_{A}^{2}$
(by Definition \ref{def:MeasureOf-any-arrow}), the above equation implies that $|a|^{2}=a^2 = ab$, or that
 $a=b$.
\end{proof}

\section{\label{sec:Comparing-Arrows-With}How Arrows Differ From Vectors}
For this section and the next we assume that we have an arrow space $\mathcal{P}_A$ 
and an arbitrary real vector space $V$.  We want to examine the differences between $\mathcal{P}_A$ and 
$V$ and conclude that arrows are different entities than vectors. 
As a case in point, we show that arrow addition is restricted
to certain pairs of arrows (the head of the first arrow must be the same as the
tail of the second arrow) and cannot be performed on any two arbitrary 
arrows of $\mathcal{P}_{A}$; see Proposition \ref{arrowadditionnotwelldefined}.
This is a fundamental difference from vector addition which is {\it always} defined for any two vectors in $V$.
We also show in Proposition \ref{prop:arr.add not comutv} that arrow addition is not commutative, 
a fact which draws a clear distinction
between arrows and vectors. More differences between arrows and vectors
will be further demonstrated.

\medskip We begin our analysis of the differences between arrows and vectors by recalling the well
known fact that for any vector
$v\in V$, it is always true that $(-1)\;v=-\;v$. However, this analog will not hold for arrows as
witnessed by the fact that the two expressions
$-\;\overrightarrow{AB}$ and $(-1)\;\overrightarrow{AB}$ in $\mathcal{P}_{A}$
are not the same. 

\begin{prop}
\label{prop:(-1)AB.NOT=00003D-AB}For any $A\neq_{P}B$, we have $-\;\overrightarrow{AB}\neq_{A}(-1)\;\overrightarrow{AB}$.
\end{prop}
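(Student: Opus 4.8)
The plan is to settle the inequality directly by comparing tails, using Definition \ref{def:equal arrows} as the criterion for arrow equality. The essential observation is that the two constructions put their tails at different points, and arrow equality is strict about tails (it demands tail equals tail and head equals head).

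First I would unwind each side. By Definition \ref{def:(-)Minuse.arrowAB}, the negation is simply $-\;\overrightarrow{AB}=\overrightarrow{BA}$, an arrow whose tail is the point $B$. For the other side, since $A\neq_{P}B$ and the scalar $t=-1$ is nonzero, Definition \ref{def:(Scalar-Multiplication-of} applies and gives $(-1)\;\overrightarrow{AB}=_{A}\overrightarrow{AD}$ for some point $D\in\mathcal{P}$ (with $\|\overrightarrow{AD}\|_{A}=\|\overrightarrow{AB}\|_{A}$ and $\left<\overrightarrow{AB},\overrightarrow{AD}\right>_{A}=-\|\overrightarrow{AB}\|_{A}\|\overrightarrow{AD}\|_{A}$). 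The only feature I need from this is that $\overrightarrow{AD}$ has tail $A$.

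The conclusion then follows by invoking Definition \ref{def:equal arrows}: two arrows coincide iff both their tails and their heads agree. The tail of $-\;\overrightarrow{AB}=\overrightarrow{BA}$ is $B$, whereas the tail of $(-1)\;\overrightarrow{AB}=_{A}\overrightarrow{AD}$ is $A$; since $A\neq_{P}B$ by hypothesis, the tails already disagree, so $\overrightarrow{BA}\neq_{A}\overrightarrow{AD}$, which is exactly $-\;\overrightarrow{AB}\neq_{A}(-1)\;\overrightarrow{AB}$.

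There is no genuinely hard step here: the entire content is that scalar multiplication by $-1$ leaves the tail pinned at $A$ while negation swaps tail and head to $B$. The only point requiring care is conceptual rather than computational, namely not to conflate ``opposite direction'' with ``equal as arrows.'' The two arrows do lie on the same line and point in opposite directions, and their heads may differ as well, but the decisive and fully sufficient discrepancy is in the tails. In particular I would not need the exact location of $D$, nor its uniqueness from Theorem \ref{thm:WAS.AXIOM (6)}, to carry out this argument.
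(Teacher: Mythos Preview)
Your proposal is correct and follows essentially the same argument as the paper: unwind both sides via Definitions \ref{def:(-)Minuse.arrowAB} and \ref{def:(Scalar-Multiplication-of}, then invoke Definition \ref{def:equal arrows} to see that the tails $B$ and $A$ disagree. Your write-up is slightly more explicit about why Definition \ref{def:(Scalar-Multiplication-of} applies (nonzero scalar, $A\neq_{P}B$) and about which feature of $D$ is actually needed, but the logical route is identical.
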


\begin{figure}[ht]
\includegraphics[scale=0.5]{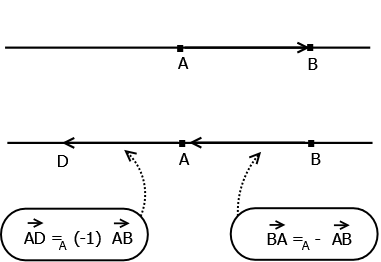}
\caption{ An illustration of Proposition \ref{prop:(-1)AB.NOT=00003D-AB}. Note that $(-1)\;\protect\overrightarrow{AB}$ and  $-\;\protect\overrightarrow{AB}$ have different heads and tails.}
\end{figure}

\begin{proof}
Definition \ref{def:(-)Minuse.arrowAB} implies that $-\;\overrightarrow{AB}=_{A}\overrightarrow{BA}$,
while Definition \ref{def:(Scalar-Multiplication-of} implies that $(-1)\;\overrightarrow{AB}=_{A}\overrightarrow{AD}$, where $D\in\mathcal{P}$. 
Since $A\neq_{P}B$, an application of Definition
\ref{def:equal arrows} shows that $\overrightarrow{BA}\neq_{A}\overrightarrow{AD}$,
which means that $-\;\overrightarrow{AB}\neq_{A}(-1)\;\overrightarrow{AB}$.
\end{proof}

\medskip
Next we turn our attention to the ways in which arrow addition
differs from that of vector addition.
The addition of arrows, as given by Definition \ref{def:ARROWSADDITIONHEADTAIL},
seems natural. If we have an arrow $\overrightarrow{AB}$
which has $B$ as its head, and at the same time $B$ is the tail of
another arrow $\overrightarrow{BC}$, then the resultant
arrow obtained from adding these two arrows is the arrow
that has its tail as the tail of the first arrow
and its head as the head of the second arrow, namely
$\overrightarrow{AC}$; see Figure 2.1.
But this natural definition has some drawbacks.
The first drawback is that arrow addition is not defined for arbitrary pairs
of arrows.

\begin{prop}\label{arrowadditionnotwelldefined}
The operation $+_{A}$ given by Definition \ref{def:ARROWSADDITIONHEADTAIL}
is not well defined for arbitrary elements of $\mathcal{P}_A\times \mathcal{P}_A$.
\end{prop}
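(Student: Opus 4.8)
The plan is to show that $+_{A}$ fails to be a total binary operation on $\mathcal{P}_{A}\times\mathcal{P}_{A}$ by exhibiting a single pair of arrows to which Definition \ref{def:ARROWSADDITIONHEADTAIL} assigns no value. A genuine binary operation on $\mathcal{P}_{A}$ would be a function $\mathcal{P}_{A}\times\mathcal{P}_{A}\longrightarrow\mathcal{P}_{A}$ defined on every ordered pair; by contrast, Definition \ref{def:ARROWSADDITIONHEADTAIL} only produces an output for a pair presented in the form $(\overrightarrow{XY},\overrightarrow{YZ})$, that is, when the head of the first arrow coincides with the tail of the second. So it suffices to produce one pair that cannot be put into this form.

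For the counterexample I would invoke the interesting case in which $\mathcal{P}$ contains at least two distinct points, say $A\neq_{P}B$ (when $\mathcal{P}$ is a single point the assertion is vacuous, since every arrow is then $\overrightarrow{AA}$ and all sums are defined). I would then consider the ordered pair $(\overrightarrow{AB},\overrightarrow{AB})$. Here the head of the first arrow is $B$ and the tail of the second is $A$; since $A\neq_{P}B$, Definition \ref{def:ARROWSADDITIONHEADTAIL} simply does not apply, so $\overrightarrow{AB}+_{A}\overrightarrow{AB}$ has no defined value. Equally one may take $(\overrightarrow{AB},\overrightarrow{CD})$ with $B\neq_{P}C$.

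The one point that must be pinned down --- and the only place where any care is needed --- is that such a pair genuinely cannot be coerced into addable form. By Definition \ref{def:ArrowDef.} an arrow is literally the ordered pair (tail, head), and by Definition \ref{def:equal arrows} two arrows are equal precisely when their tails agree and their heads agree. Hence the representation of $\overrightarrow{AB}$ as a tail--head pair is unique: it cannot be rewritten as $\overrightarrow{AY}$ with $Y\neq_{P}B$, nor as $\overrightarrow{XB}$ with $X\neq_{P}A$. Consequently there is no admissible relabeling that makes the head of the first arrow equal to the tail of the second, so Definition \ref{def:ARROWSADDITIONHEADTAIL} is truly inapplicable to this pair. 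This establishes that $+_{A}$ is not well defined on arbitrary elements of $\mathcal{P}_{A}\times\mathcal{P}_{A}$.
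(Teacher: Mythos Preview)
Your proposal is correct and follows essentially the same approach as the paper: exhibit a pair of arrows whose first head differs from the second tail, so that Definition~\ref{def:ARROWSADDITIONHEADTAIL} assigns no value. The paper's proof is a one-liner using $\overrightarrow{AB}+_{A}\overrightarrow{DC}$ with $B\neq_{P}D$; you give the same idea with the specific pair $(\overrightarrow{AB},\overrightarrow{AB})$ and add helpful (if not strictly necessary) remarks on the degenerate one-point case and on why Definitions~\ref{def:ArrowDef.} and~\ref{def:equal arrows} preclude any relabeling.
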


\begin{proof}
This is an immediate consequence of Definition \ref{def:ARROWSADDITIONHEADTAIL} 
since $\overrightarrow{AB}+_{A}\overrightarrow{DC}$
is not well defined if $B\neq_{P}D$.
\end{proof}

Proposition \ref{arrowadditionnotwelldefined} demonstrates a fundamental difference between arrow addition and 
vector addition since vector addition is defined for any element of $V\times V$. Another fundamental difference between these
two operations involves commutativity.  Vector addition is commutative binary operation.
However, this is surprisingly not the case for arrow addition as demonstrated by the following proposition.
For this reason alone, arrows and vectors should not be thought of as a single notion.
\begin{prop}
\label{prop:arr.add not comutv}The operation $+_{A}$ given by Definition \ref{def:ARROWSADDITIONHEADTAIL} is not commutative.
\end{prop}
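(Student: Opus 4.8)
The plan is to exhibit an explicit pair of arrows whose two orders of addition fail to agree, thereby showing $+_{A}$ is not commutative. The cleanest witnesses are the two arrows $\overrightarrow{AB}$ and $\overrightarrow{BA}$ arising from two distinct points $A \neq_{P} B$, since both sums $\overrightarrow{AB} +_{A} \overrightarrow{BA}$ and $\overrightarrow{BA} +_{A} \overrightarrow{AB}$ are simultaneously well defined (the head of each summand matches the tail of the next), so the failure of commutativity cannot be dismissed as a mere domain issue. This is preferable to using $\overrightarrow{AB}$ and $\overrightarrow{BC}$, where one of the two orders would be undefined, because the introduction already flagged that subtlety and the reader deserves the sharper example.

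First I would invoke Axiom 0 to fix two distinct points $A, B \in \mathcal{P}$ with $A \neq_{P} B$. Then I would apply Definition \ref{def:ARROWSADDITIONHEADTAIL} twice, once in each order, to compute
\[
\overrightarrow{AB} +_{A} \overrightarrow{BA} =_{A} \overrightarrow{AA}
\qquad\text{and}\qquad
\overrightarrow{BA} +_{A} \overrightarrow{AB} =_{A} \overrightarrow{BB}.
\]
The final step is to conclude that these two resulting arrows are different. By Definition \ref{def:equal arrows}, the arrows $\overrightarrow{AA}$ and $\overrightarrow{BB}$ are equal if and only if $A =_{P} B$ (matching tails) and $A =_{P} B$ (matching heads). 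Since we chose $A \neq_{P} B$, Definition \ref{def:equal arrows} gives $\overrightarrow{AA} \neq_{A} \overrightarrow{BB}$, so $\overrightarrow{AB} +_{A} \overrightarrow{BA} \neq_{A} \overrightarrow{BA} +_{A} \overrightarrow{AB}$, establishing non-commutativity.

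There is essentially no hard part here; this is a definitional counterexample rather than a genuine obstacle. The only subtlety worth guarding against is the one raised in the introduction: one must ensure the chosen example has both sums defined, lest the proposition be trivialized into a restatement of Proposition \ref{arrowadditionnotwelldefined}. Using the pair $\overrightarrow{AB}, \overrightarrow{BA}$ precisely handles this, making clear that even when commutativity is meaningful to ask about, it still fails. I would also remark that this rests on the existence of at least two distinct points, which Axiom 0 together with the labeling convention $\mathcal{P} = \{A, B, C, \dots\}$ supplies.
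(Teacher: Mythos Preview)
Your proof is correct and essentially identical to the paper's own argument: pick distinct points $A \neq_{P} B$, compute $\overrightarrow{AB}+_{A}\overrightarrow{BA}=_{A}\overrightarrow{AA}$ and $\overrightarrow{BA}+_{A}\overrightarrow{AB}=_{A}\overrightarrow{BB}$, and invoke Definition~\ref{def:equal arrows} to conclude $\overrightarrow{AA}\neq_{A}\overrightarrow{BB}$. Your added remark that this example avoids the undefined-sum issue of Proposition~\ref{arrowadditionnotwelldefined} is a worthwhile clarification, but the core proof matches the paper's exactly.
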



\begin{proof}
If $A$ and $B$ are two distinct points of $\mathcal{P}$, then Definition \ref{def:ARROWSADDITIONHEADTAIL}
implies that
\[
\overrightarrow{AB}+_{A}\overrightarrow{BA}=_{A}\overrightarrow{AA}
\qquad\text{and}\qquad
\overrightarrow{BA}+_{A}\overrightarrow{AB}=_{A}\overrightarrow{BB}.
\]
 If $\overrightarrow{AB}+_{A}\overrightarrow{BA}=_{A}\overrightarrow{BA}+_{A}\overrightarrow{AB}$,
then we would have $\overrightarrow{AA}=_{A}\overrightarrow{BB}$
which is impossible by Definition \ref{def:equal arrows} as $A\neq_{P}B$.
Therefore, $\overrightarrow{AB}+_{A}\overrightarrow{BA}\neq_{A}\overrightarrow{BA}+_{A}\overrightarrow{AB}$
implying that $+_{A}$ is noncommutative.
\end{proof}

\medskip 
We now turn our attention to differences between the corresponding operations of scalar multiplication.
 As was the case for arrow addition, we will discover that certain notions involving scalar multiplication in an arrow
space are not well defined.
First we show that the property $s(v+w) = sv + sw$, where $s \in \mathbb{R}$ and $v, w\in V$, does not necessarily
hold in $\mathcal{P}_A$.

\begin{thm}
Let $A,\;B$, and $C$ be three distinct points of $\mathcal{P}$.
Consider the two arrows $\overrightarrow{AB}$ and $\overrightarrow{BC}$
in $\mathcal{P}_{A}$. For any $s\neq1$, 
$(s)\;(\overrightarrow{AB}+_{A}\overrightarrow{BC})$ and $(s)\;\overrightarrow{AB}+_{A}(s)\;\overrightarrow{BC}$
are not equivalent. In particular, 
$(s)\;\overrightarrow{AB}+_{A}(s)\;\overrightarrow{BC}$ is not well defined.
\end{thm}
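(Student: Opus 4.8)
The plan is to exploit the fact that arrow scalar multiplication preserves the tail of an arrow, whereas arrow addition (Definition \ref{def:ARROWSADDITIONHEADTAIL}) can only be performed when the head of the first summand coincides with the tail of the second. Since $A\neq_{P}B$, Definition \ref{def:(Scalar-Multiplication-of} gives $(s)\;\overrightarrow{AB}=_{A}\overrightarrow{AD}$ for some point $D$ (with $D=_{P}A$ in the degenerate case $s=0$), and likewise, since $B\neq_{P}C$, it gives $(s)\;\overrightarrow{BC}=_{A}\overrightarrow{BE}$ for some point $E$. The crucial structural observation is that the first resulting arrow has tail $A$ and head $D$, while the second has tail $B$. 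Hence $(s)\;\overrightarrow{AB}+_{A}(s)\;\overrightarrow{BC}$ becomes $\overrightarrow{AD}+_{A}\overrightarrow{BE}$, and by Definition \ref{def:ARROWSADDITIONHEADTAIL} this is defined precisely when the head $D$ of the first arrow equals the tail $B$ of the second, i.e. when $D=_{P}B$.

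First I would reduce the whole statement to showing that $D\neq_{P}B$ whenever $s\neq 1$, and then analyze when $D=_{P}B$ could conceivably hold. By Definition \ref{def:(Scalar-Multiplication-of}(1) the head $D$ satisfies $||\overrightarrow{AD}||_{A}=|s|\;||\overrightarrow{AB}||_{A}$; if $D=_{P}B$ this forces $||\overrightarrow{AB}||_{A}=|s|\;||\overrightarrow{AB}||_{A}$, and since $A\neq_{P}B$ gives $||\overrightarrow{AB}||_{A}\neq 0$ (Lemma \ref{lem:||AB||=00003D0 IFF}), we conclude $|s|=1$, that is $s\in\{1,-1\}$. Thus the length condition alone already confines the only possible ``good'' values of $s$ to $1$ and $-1$.

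The main obstacle is disposing of the borderline values $s=-1$ and $s=0$, which the length condition does not exclude. For $s=0$ the argument is immediate: $(0)\;\overrightarrow{AB}=_{A}\overrightarrow{AA}$ has head $A$, and $A\neq_{P}B$, so the head--tail matching fails. For $s=-1$ I would invoke the direction clause of Definition \ref{def:(Scalar-Multiplication-of}(2): if $D=_{P}B$ then $\overrightarrow{AD}=_{A}\overrightarrow{AB}$, so that clause would give $\left<\overrightarrow{AB},\overrightarrow{AB}\right>_{A}=-||\overrightarrow{AB}||_{A}||\overrightarrow{AB}||_{A}=-||\overrightarrow{AB}||_{A}^{2}$, whereas $\left<\overrightarrow{AB},\overrightarrow{AB}\right>_{A}=||\overrightarrow{AB}||_{A}^{2}>0$ by Definition \ref{def:MeasureOf-any-arrow} and Equation (\ref{eq:AXIOMz}); this contradiction shows $D\neq_{P}B$. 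Combining the cases, for every $s\neq 1$ we obtain $D\neq_{P}B$, so $\overrightarrow{AD}+_{A}\overrightarrow{BE}$ violates the head--tail condition and $(s)\;\overrightarrow{AB}+_{A}(s)\;\overrightarrow{BC}$ is not well defined.

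Finally I would observe that the first assertion follows at once. Since $A,B,C$ are distinct we have $\overrightarrow{AB}+_{A}\overrightarrow{BC}=_{A}\overrightarrow{AC}$ with $A\neq_{P}C$, so $(s)\;(\overrightarrow{AB}+_{A}\overrightarrow{BC})=_{A}(s)\;\overrightarrow{AC}$ is a genuine, well-defined arrow, while $(s)\;\overrightarrow{AB}+_{A}(s)\;\overrightarrow{BC}$ has just been shown to be meaningless for $s\neq 1$. A well-defined arrow cannot be equivalent to an undefined expression, which yields the non-equivalence claimed in the statement.
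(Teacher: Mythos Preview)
Your proof is correct and follows the same overall strategy as the paper: both arguments show that $(s)\;\overrightarrow{AB}=_{A}\overrightarrow{AD}$ has head $D\neq_{P}B$ whenever $s\neq 1$, so that $\overrightarrow{AD}+_{A}\overrightarrow{BE}$ violates the head--tail condition of Definition~\ref{def:ARROWSADDITIONHEADTAIL}, while $(s)\;(\overrightarrow{AB}+_{A}\overrightarrow{BC})=_{A}(s)\;\overrightarrow{AC}$ is a genuine arrow.

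The only difference lies in how $D\neq_{P}B$ is justified. The paper dispatches the case $s=0$ separately (getting $\overrightarrow{AA}+_{A}\overrightarrow{BB}$) and for $s\neq 0,1$ simply cites a later result (essentially Theorem~\ref{thm:aABbAB} together with $(1)\overrightarrow{AB}=_{A}\overrightarrow{AB}$) to assert $D\neq_{P}B$. You instead argue directly from Definition~\ref{def:(Scalar-Multiplication-of}: the length clause forces $|s|=1$ if $D=_{P}B$, the case $s=0$ gives $D=_{P}A\neq_{P}B$, and the direction clause rules out $s=-1$ by producing $\langle\overrightarrow{AB},\overrightarrow{AB}\rangle_{A}=-\|\overrightarrow{AB}\|_{A}^{2}$. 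Your route is more self-contained and avoids the forward reference; the paper's is shorter once that reference is granted.
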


\begin{proof}
First, if $s=0$, then Definitions \ref{def:ARROWSADDITIONHEADTAIL} and
\ref{def:(Scalar-Multiplication-of} imply that
\begin{equation}
(s)\;(\overrightarrow{AB}+_{A}\overrightarrow{BC})=_{A}(0)\;\overrightarrow{AC}=_{A}\overrightarrow{AA}.\label{eq:E}
\end{equation}
On the other hand, let us ``evaluate" expression $(s)\;\overrightarrow{AB}+_{A}(s)\;\overrightarrow{BC}$.
Definition \ref{def:(Scalar-Multiplication-of} implies that
$(0)\;\overrightarrow{AB}=_{A}\overrightarrow{AA}$
and that $(0)\;\overrightarrow{BC}=_{A}\overrightarrow{BB}$.
Since $A\neq_{P}B$, $\overrightarrow{AA}+_{A}\overrightarrow{BB}$
is not well defined in the sense of Definition \ref{def:ARROWSADDITIONHEADTAIL}.

\medskip
 Now assume $s\neq0,\;1$.
 It follows by Definitions \ref{def:ARROWSADDITIONHEADTAIL} and
\ref{def:(Scalar-Multiplication-of} that
\begin{equation}
(s)\;(\overrightarrow{AB}+_{A}\overrightarrow{BC})=_{A}(s)\;\overrightarrow{AC}=_{A}\overrightarrow{AM},\label{eq:EE}
\end{equation}
for some points $C$ and $M$ in $\mathcal{P}$. On the other
hand, Definition \ref{def:(Scalar-Multiplication-of} implies that
that $(s)\;\overrightarrow{AB}=_{A}\overrightarrow{AD}$ for some point
$D$ in $\mathcal{P}$ with $D\neq_{P}B$ (as $s\neq1$; see Theorem 23), and that $(s)\;\overrightarrow{BC}=_{A}\overrightarrow{BE}$
for some point $E$ in $\mathcal{P}$. Thus, $(s)\;\overrightarrow{AB}+_{A}(s)\;\overrightarrow{BC}=_{A}\overrightarrow{AD}+_{A}\overrightarrow{BE}$,
and by Definition \ref{def:ARROWSADDITIONHEADTAIL}, as $D\neq_{P}B$, the right side is undefined. 
\end{proof}

\medskip
Next we show that the property $(s+t)v = sv + tv$, where $s,t \in \mathbb{R}$ and $v\in V$, also fails
to hold in $\mathcal{P}_A$.

\begin{thm}
Given an arrow $\overrightarrow{AB}$ in $\mathcal{P}_A$ with $A\neq_{P}B$, let $s,\;t\in\mathbb{R}$
with $s\neq0$. The two expressions $(s+t)\;(\overrightarrow{AB})$
and $(s)\;\overrightarrow{AB}+_{A}(t)\;\overrightarrow{AB}$ are not equivalent. However, if $s=0$, then $(s+t)\;(\overrightarrow{AB})=_{A}(s)\;\overrightarrow{AB}+_{A}(t)\;\overrightarrow{AB}$.
\end{thm}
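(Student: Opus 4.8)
The plan is to follow the template of the preceding theorem: for the main assertion (the case $s\neq 0$) I would show that the right-hand expression $(s)\;\overrightarrow{AB}+_{A}(t)\;\overrightarrow{AB}$ cannot even be evaluated, because the underlying arrow addition is undefined, whereas the left-hand expression $(s+t)\;\overrightarrow{AB}$ is a perfectly good arrow. Since one side is a well-defined arrow and the other is a meaningless symbol, the two cannot be equivalent in the sense of Definition \ref{def:equal arrows}.

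Concretely, for $s\neq 0$ I would invoke Definition \ref{def:(Scalar-Multiplication-of} to write $(s)\;\overrightarrow{AB}=_{A}\overrightarrow{AD}$ and $(t)\;\overrightarrow{AB}=_{A}\overrightarrow{AE}$ for suitable points $D,E\in\mathcal{P}$ (with $\overrightarrow{AE}=_{A}\overrightarrow{AA}$ in the degenerate subcase $t=0$). Note that both scalar multiples share the common tail $A$. The key observation is that $D\neq_{P}A$: by Lemma \ref{lem:tAB} we have $||\overrightarrow{AD}||_{A}=||(s)\;\overrightarrow{AB}||_{A}=|s|\;||\overrightarrow{AB}||_{A}$, and since $s\neq 0$ while $A\neq_{P}B$ forces $||\overrightarrow{AB}||_{A}>0$ by Lemma \ref{lem:||AB||=00003D0 IFF}, we conclude $||\overrightarrow{AD}||_{A}>0$, whence $D\neq_{P}A$ again by Lemma \ref{lem:||AB||=00003D0 IFF}. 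Therefore, in forming $\overrightarrow{AD}+_{A}\overrightarrow{AE}$ the head $D$ of the first arrow does not coincide with the tail $A$ of the second, so by Definition \ref{def:ARROWSADDITIONHEADTAIL} this sum is undefined. This settles the non-equivalence.

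For the complementary claim ($s=0$) the situation reverses. Here $(s)\;\overrightarrow{AB}=_{A}(0)\;\overrightarrow{AB}=_{A}\overrightarrow{AA}$ by Definition \ref{def:(Scalar-Multiplication-of}, while $(t)\;\overrightarrow{AB}=_{A}\overrightarrow{AE}$ still has tail $A$. Now the head of $\overrightarrow{AA}$ is $A$, which \emph{does} coincide with the tail of $\overrightarrow{AE}$, so the arrow addition is legitimate and Definition \ref{def:ARROWSADDITIONHEADTAIL} gives $\overrightarrow{AA}+_{A}\overrightarrow{AE}=_{A}\overrightarrow{AE}=_{A}(t)\;\overrightarrow{AB}$. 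Since $s+t=t$ when $s=0$, the left-hand side $(s+t)\;\overrightarrow{AB}$ likewise equals $(t)\;\overrightarrow{AB}$, yielding the asserted equality (including the case $t=0$, where both sides reduce to $\overrightarrow{AA}$).

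The only delicate point, and the step I would be most careful about, is establishing $D\neq_{P}A$, since this is the exact hinge on which the failure of well-definedness of the arrow addition turns. It rests on combining the length formula of Lemma \ref{lem:tAB} with the positive-definiteness characterization of Lemma \ref{lem:||AB||=00003D0 IFF}, using $s\neq 0$. Everything else is a direct reading of the definitions of scalar multiplication and arrow addition.
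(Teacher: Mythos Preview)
Your proposal is correct and follows essentially the same approach as the paper: both argue that when $s\neq 0$ the scalar multiple $(s)\;\overrightarrow{AB}$ has head distinct from $A$, so the arrow sum $(s)\;\overrightarrow{AB}+_{A}(t)\;\overrightarrow{AB}$ is undefined by Definition~\ref{def:ARROWSADDITIONHEADTAIL}, while $(s+t)\;\overrightarrow{AB}$ is a well-defined arrow; and for $s=0$ both reduce to $\overrightarrow{AA}+_{A}(t)\;\overrightarrow{AB}=_{A}(t)\;\overrightarrow{AB}$. If anything, your justification of the pivotal fact $D\neq_{P}A$ via Lemmas~\ref{lem:tAB} and~\ref{lem:||AB||=00003D0 IFF} is more explicit than the paper's, which simply asserts it.
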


\begin{proof}
First assume that $s=0$. Definition \ref{def:(Scalar-Multiplication-of} implies that
\begin{equation}
(t)\overrightarrow{AB}=_{A}\overrightarrow{AD}\label{eq:LAB}
\end{equation}
for some point $D$ in $\mathcal{P}$ and that
\begin{equation}
(s)\overrightarrow{AB}=_{A}\overrightarrow{AA}.\label{eq:LABS}
\end{equation}
Equations (\ref{eq:LAB}) and (\ref{eq:LABS}), along with Definition 5, imply that implies that 
\[
(s+t)\;(\overrightarrow{AB})=_{A}(t)\overrightarrow{AB}=_{A}\overrightarrow{AD}
=_{A} \overrightarrow{AA} +_{A} \overrightarrow{AD} = (s)\;(\overrightarrow{AB}) +_{A}(t)\overrightarrow{AB}.
\]

\medskip
Now suppose that $s\neq0$ and let $(s)\overrightarrow{AB}=_{A}\overrightarrow{AC}$ and
$(t)\;\overrightarrow{AB}=_{A}\overrightarrow{AD}$
for some points $C$ and $D$ in $\mathcal{P}$, with $C\neq_{P}A$.
The right hand side of the expression
\begin{equation}
(s)\overrightarrow{AB}+_{A}(t)\overrightarrow{AB}=_{A}\overrightarrow{AC}+_{A}\overrightarrow{AD},\label{eq:AX}
\end{equation}
is undefined according to Definition \ref{def:ARROWSADDITIONHEADTAIL}. 
On the other hand, if we put $s+t=r$ for some
real number $r$, then Definition \ref{def:(Scalar-Multiplication-of} implies
that

\begin{equation}
(s+t)\;(\overrightarrow{AB})  =_{A}(r)\overrightarrow{AB} =_{A}\overrightarrow{AE},\label{eq:AXL}
\end{equation}
where $E$ is a point in $\mathcal{P}$. Thus the expression in Equation (\ref{eq:AXL}) is well defined
while the right hand side of the
Equation (\ref{eq:AX}) is not well defined which makes it impossible to compare
the two left hand sides of these equations.
\end{proof}

\medskip
\begin{cor}
For any arrow $\overrightarrow{AB}$ in $\mathcal{P}_A$ with $A\neq_{P}B$,
$\overrightarrow{AB}+_{A}(-1)\;\overrightarrow{AB}$ is not well defined.
\end{cor}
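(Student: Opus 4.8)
The plan is to recognize this statement as the special case $s=1$, $t=-1$ of the immediately preceding theorem, whose hypothesis $s = 1 \neq 0$ is satisfied. Since $\overrightarrow{AB} =_{A} (1)\;\overrightarrow{AB}$, the expression $\overrightarrow{AB} +_{A} (-1)\;\overrightarrow{AB}$ is precisely $(1)\;\overrightarrow{AB} +_{A} (-1)\;\overrightarrow{AB}$, which that theorem already shows to be undefined. Nonetheless, it is cleaner to record a short self-contained argument straight from the definitions, and that is what I would present.

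First I would invoke Definition \ref{def:(Scalar-Multiplication-of}: since $A \neq_{P} B$ and the scalar $-1 \neq 0$, the scalar multiple $(-1)\;\overrightarrow{AB}$ equals $\overrightarrow{AD}$ for some point $D \in \mathcal{P}$. The crucial feature to extract here is that the tail of this arrow is again $A$; scalar multiplication in Definition \ref{def:(Scalar-Multiplication-of} always returns an arrow sharing the tail $A$ of the original arrow $\overrightarrow{AB}$, regardless of the sign of the scalar.

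Next I would form $\overrightarrow{AB} +_{A} (-1)\;\overrightarrow{AB} =_{A} \overrightarrow{AB} +_{A} \overrightarrow{AD}$ and appeal to Definition \ref{def:ARROWSADDITIONHEADTAIL}, which permits the sum of two arrows only when the head of the first coincides with the tail of the second. Here the head of $\overrightarrow{AB}$ is $B$ while the tail of $\overrightarrow{AD}$ is $A$. Since $A \neq_{P} B$, the required coincidence fails, so by Proposition \ref{arrowadditionnotwelldefined} the sum $\overrightarrow{AB} +_{A} \overrightarrow{AD}$ is not well defined, which is exactly the claim.

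The proof is essentially immediate, so there is no serious obstacle; the only point that must not be overlooked is that $(-1)\;\overrightarrow{AB}$ is the arrow $\overrightarrow{AD}$ emanating from $A$ (the opposite-direction arrow of the same length, per Definition \ref{arrowdirdef}), and is emphatically \emph{not} the reversed arrow $\overrightarrow{BA} = -\;\overrightarrow{AB}$. This distinction is the content of Proposition \ref{prop:(-1)AB.NOT=00003D-AB}; were $(-1)\;\overrightarrow{AB}$ equal to $\overrightarrow{BA}$, the sum $\overrightarrow{AB} +_{A} \overrightarrow{BA}$ would in fact be well defined and equal $\overrightarrow{AA}$. Thus the corollary hinges precisely on the tail of $(-1)\;\overrightarrow{AB}$ being $A$ rather than $B$.
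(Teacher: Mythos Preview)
Your proposal is correct and matches the paper's intent: the corollary is stated without proof, so the paper treats it as immediate from the preceding theorem, and your self-contained argument is exactly the specialization of that theorem's proof to this case. One small caveat: your opening sentence invokes $\overrightarrow{AB} =_{A} (1)\;\overrightarrow{AB}$, which is Theorem~\ref{thm:1 AB=00003DAB} and appears only later in Section~\ref{sec:Section4}; fortunately your direct argument from Definitions~\ref{def:ARROWSADDITIONHEADTAIL} and~\ref{def:(Scalar-Multiplication-of} does not actually need that identification, so the proof stands on its own.
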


\section{\label{sec:Section4}Similarities Between Arrow Spaces and Vector Spaces}
So far we have investigated some axioms of vector space $V$ which do not hold
in $\mathcal{P}_{A}$.  In this section we turn our attention to those axioms of $V$ which 
have corresponding analogs in $\mathcal{P}_A$. We first show
that arrow addition $+_{A}$ is associative.
\begin{thm}
\label{thm:ARR.ASSO}Let $\overrightarrow{AB},\;\overrightarrow{BC},\;\overrightarrow{CD}$
be arrows in $\mathcal{P}_{A}$. Then 
\[
(\overrightarrow{AB}+_{A}\overrightarrow{BC})+_{A}\overrightarrow{CD}=_{A}\overrightarrow{AB}+_{A}(\overrightarrow{BC}+_{A}\overrightarrow{CD}),
\]
 that is the addition $+_{A}$ is associative.
\end{thm}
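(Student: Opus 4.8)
The plan is to evaluate each side of the claimed equality by repeatedly applying the definition of arrow addition (Definition \ref{def:ARROWSADDITIONHEADTAIL}) and then to compare the two resulting arrows using the criterion for arrow equality (Definition \ref{def:equal arrows}). The crucial structural feature that makes the proof work is that the three arrows $\overrightarrow{AB},\overrightarrow{BC},\overrightarrow{CD}$ are presented in a chained (head-to-tail) configuration, so that every intermediate sum we form satisfies the head-equals-tail requirement of Definition \ref{def:ARROWSADDITIONHEADTAIL} and is therefore well defined.

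First I would reduce the left-hand side. Since the head of $\overrightarrow{AB}$ is $B$ and the tail of $\overrightarrow{BC}$ is $B$, Definition \ref{def:ARROWSADDITIONHEADTAIL} gives $\overrightarrow{AB}+_{A}\overrightarrow{BC}=_{A}\overrightarrow{AC}$. Adding $\overrightarrow{CD}$, whose tail $C$ matches the head of $\overrightarrow{AC}$, yields $(\overrightarrow{AB}+_{A}\overrightarrow{BC})+_{A}\overrightarrow{CD}=_{A}\overrightarrow{AC}+_{A}\overrightarrow{CD}=_{A}\overrightarrow{AD}$. Next I would reduce the right-hand side in the opposite order: because the head of $\overrightarrow{BC}$ equals the tail of $\overrightarrow{CD}$, Definition \ref{def:ARROWSADDITIONHEADTAIL} gives $\overrightarrow{BC}+_{A}\overrightarrow{CD}=_{A}\overrightarrow{BD}$, and then adding $\overrightarrow{AB}$ on the left (head $B$ matches tail $B$) gives $\overrightarrow{AB}+_{A}(\overrightarrow{BC}+_{A}\overrightarrow{CD})=_{A}\overrightarrow{AB}+_{A}\overrightarrow{BD}=_{A}\overrightarrow{AD}$. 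Both sides therefore reduce to the single arrow $\overrightarrow{AD}$, which shares the same tail $A$ and the same head $D$ on each side, so by Definition \ref{def:equal arrows} they are equal.

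I do not expect any serious obstacle here, since the computation is purely a matter of bookkeeping the tails and heads. The one point requiring care, and the reason the statement is not vacuous, is verifying that each of the two grouped expressions is genuinely well defined before comparing them: the parenthesized sums must themselves be legitimate arrows so that the outer addition can be performed. This is guaranteed precisely by the head-to-tail chaining of the hypothesis, and no non-degeneracy assumption on the points (such as distinctness) is needed, since the definition of $+_{A}$ applies equally to arrows of the form $\overrightarrow{XX}$.
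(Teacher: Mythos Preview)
Your proof is correct and follows essentially the same approach as the paper: both sides are reduced to $\overrightarrow{AD}$ by two applications of Definition \ref{def:ARROWSADDITIONHEADTAIL}, and the resulting arrows are compared. Your additional remarks on well-definedness and the lack of need for distinctness are accurate but go slightly beyond what the paper makes explicit.
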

\begin{figure}[ht]
\centering
\includegraphics[width=0.8\linewidth]{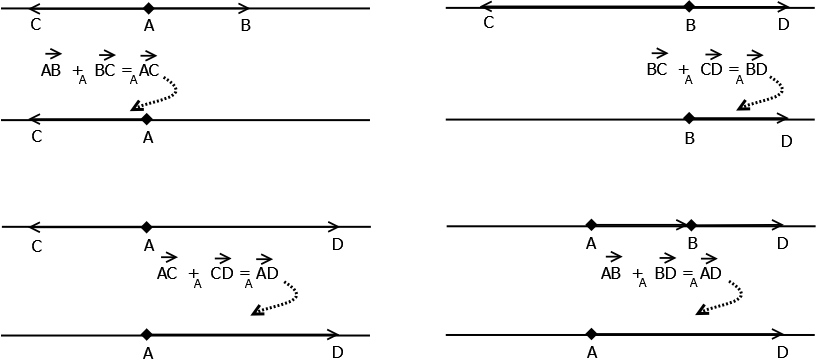}
\caption{An illustration of Theorem \ref{thm:ARR.ASSO}. The left section of this figure explains $(\protect\overrightarrow{AB}+_{A}\protect\overrightarrow{BC})+_{A}\protect\overrightarrow{CD}$, while the right one stands for $\protect\overrightarrow{AB}+_{A}(\protect\overrightarrow{BC}+_{A}\protect\overrightarrow{CD})$.}
\label{fig:THEOREM18}
\end{figure}

\begin{proof}
Given three arrows $\overrightarrow{AB},\;\overrightarrow{BC}$, and
$\overrightarrow{CD}$ in $\mathcal{P}_{A}$, Definition \ref{def:ARROWSADDITIONHEADTAIL} implies that
\begin{equation}
(\overrightarrow{AB}+_{A}\overrightarrow{BC})+_{A}\overrightarrow{CD}=_{A}\overrightarrow{AC}+_{A}\overrightarrow{CD} 
=_{A}\overrightarrow{AD}.\label{eq:*0}
\end{equation}
A similar calculation shows that
\begin{equation}
\overrightarrow{AB}+_{A}(\overrightarrow{BC}+_{A}\overrightarrow{CD}) =_{A}\overrightarrow{AB}+_{A}\overrightarrow{BD}\nonumber \\
 =_{A}\overrightarrow{AD}.\label{eq:*1}
\end{equation}
Combining the Equations (\ref{eq:*0}) and (\ref{eq:*1}) finishes the proof.
\end{proof}

\medskip
A similar result regarding occurs for arrow scalar multiplication.
But in order to prove the associativity of arrow scalar multiplication we will need the following proposition.
This proposition is important in its own right since it will be crucial to proving the existence of line between any two points; see Subsection 5.1.
\begin{prop}
\label{lem:ABAD0}Let $\overrightarrow{AB}$ and $\overrightarrow{AD}$
be two arrows of $\mathcal{P}_{A}$ such that 
$||\overrightarrow{AB}||_{A}=||\overrightarrow{AD}||_{A}$ and $\frac{\left<\overrightarrow{AB},\overrightarrow{AD}\right>_{A}}{||\overrightarrow{AB}||_{A}\;||\overrightarrow{AD}||_{A}}=1$.
Then $B=_{P}D$.
\end{prop}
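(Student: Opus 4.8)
The plan is to prove $B=_{P}D$ by showing that $\overrightarrow{BD}$ is a zero arrow, i.e.\ that $||\overrightarrow{BD}||_{A}=0$, and then invoking Lemma \ref{lem:||AB||=00003D0 IFF}. The strategy exploits the fact that $\overrightarrow{AB}$ and $\overrightarrow{AD}$ share the common tail $A$, so that $\overrightarrow{BD}$ can be presented as the (well-defined) arrow sum $\overrightarrow{BA}+_{A}\overrightarrow{AD}$, after which the measure of $\overrightarrow{BD}$ is expanded by the bilinearity of the pre-inner product and collapses by the hypotheses.

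First I would record the algebraic content of the hypotheses. Writing $r:=||\overrightarrow{AB}||_{A}=||\overrightarrow{AD}||_{A}$ for the common measure, the direction condition clears its (nonzero) denominator to give
\[
\left<\overrightarrow{AB},\overrightarrow{AD}\right>_{A}=||\overrightarrow{AB}||_{A}\,||\overrightarrow{AD}||_{A}=r^{2}.
\]
Since the fractional hypothesis presupposes $r\neq0$, Lemma \ref{lem:||AB||=00003D0 IFF} gives $A\neq_{P}B$ and $A\neq_{P}D$; this incidental fact is not needed below, as the displayed identity together with the equality of measures is all the computation uses.

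Next, using $\overrightarrow{BD}=_{A}\overrightarrow{BA}+_{A}\overrightarrow{AD}$ together with $\overrightarrow{BA}=-\,\overrightarrow{AB}$ (Definition \ref{def:(-)Minuse.arrowAB}), I would expand the squared measure by Equation (\ref{eq:AXIOMc}):
\begin{align*}
||\overrightarrow{BD}||_{A}^{2} &= \left<\overrightarrow{BA}+_{A}\overrightarrow{AD},\overrightarrow{BA}+_{A}\overrightarrow{AD}\right>_{A} \\
&= \left<\overrightarrow{BA},\overrightarrow{BA}\right>_{A}+\left<\overrightarrow{BA},\overrightarrow{AD}\right>_{A}+\left<\overrightarrow{AD},\overrightarrow{BA}\right>_{A}+\left<\overrightarrow{AD},\overrightarrow{AD}\right>_{A}.
\end{align*}
The four terms are then evaluated individually: the outer two give $||\overrightarrow{BA}||_{A}^{2}=||\overrightarrow{AB}||_{A}^{2}=r^{2}$ (by Equation (\ref{eq:LEM11})) and $||\overrightarrow{AD}||_{A}^{2}=r^{2}$, while each cross term equals $\left<-\,\overrightarrow{AB},\overrightarrow{AD}\right>_{A}=-\left<\overrightarrow{AB},\overrightarrow{AD}\right>_{A}=-r^{2}$ by the negation rule (Equation (\ref{eq:A01-1})) and symmetry (Equation (\ref{eq:AXIOMb})). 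Summing gives $||\overrightarrow{BD}||_{A}^{2}=r^{2}-r^{2}-r^{2}+r^{2}=0$, so $||\overrightarrow{BD}||_{A}=0$, and Lemma \ref{lem:||AB||=00003D0 IFF} yields $B=_{P}D$.

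I do not expect any genuine obstacle; the only point needing care is the bookkeeping in the expansion. Specifically, one must verify that $\overrightarrow{BD}$ is legitimately written as $\overrightarrow{BA}+_{A}\overrightarrow{AD}$ (the head of $\overrightarrow{BA}$ and the tail of $\overrightarrow{AD}$ are both $A$, so $+_{A}$ is defined) and that the bilinearity identity (\ref{eq:AXIOMc}) is applied with the common point $A$ serving as the shared vertex in both slots. Once the sum is routed through $A$, the cancellation is forced: the positive contributions from the two diagonal (self) terms are exactly offset by the two cross terms, whose sign is flipped by the negation rule acting on $\overrightarrow{BA}=-\,\overrightarrow{AB}$.
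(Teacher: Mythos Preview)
Your proof is correct and follows essentially the same strategy as the paper: both arguments show $||\overrightarrow{BD}||_{A}^{2}=0$ (equivalently $||\overrightarrow{DB}||_{A}^{2}=0$) by expanding via bilinearity and the hypotheses, then invoke Lemma \ref{lem:||AB||=00003D0 IFF}. The only difference is organizational: the paper frames the argument by contradiction and first establishes the two intermediate orthogonality relations $\left<\overrightarrow{DB},\overrightarrow{DA}\right>_{A}=0$ and $\left<\overrightarrow{DB},\overrightarrow{AB}\right>_{A}=0$ before adding them, whereas your direct four-term expansion of $\left<\overrightarrow{BA}+_{A}\overrightarrow{AD},\overrightarrow{BA}+_{A}\overrightarrow{AD}\right>_{A}$ reaches the same cancellation in one step.
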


\begin{proof}
Assume by way
of contradiction that $B\neq_{P}D$. By hypothesis
we have $||\overrightarrow{AB}||_{A}=||\overrightarrow{AD}||_{A}$
and $\frac{\left<\overrightarrow{AB},\overrightarrow{AD}\right>_{A}}{||\overrightarrow{AB}||_{A}\;||\overrightarrow{AD}||_{A}}=1$,
which implies that
\begin{equation}
\left<\overrightarrow{AB},\overrightarrow{AD}\right>_{A}=\left<\overrightarrow{AD},\overrightarrow{AD}\right>_{A}.\label{eq:T}
\end{equation}
An application of Axiom 1 and Definition
\ref{def:(-)Minuse.arrowAB} to Equation (\ref{eq:T}) yields
\[
\left<\overrightarrow{DA},\overrightarrow{AD}\right>_{A}+\left<\overrightarrow{AB},\overrightarrow{AD}\right>_{A} 
= \left<\overrightarrow{DB},\overrightarrow{AD}\right>_{A}=0.
\]
Since $\overrightarrow{AD}=_{A}-\overrightarrow{DA}$ (see Definition
\ref{def:(-)Minuse.arrowAB}), the preceding equation is equivalent to
\begin{equation}
\left<\overrightarrow{DB},\overrightarrow{DA}\right>_{A}=0.\label{eq:TE}
\end{equation}
On the other hand, since $||\overrightarrow{AB}||_{A}=||\overrightarrow{AD}||_{A}$
we can rewrite Equation (\ref{eq:T}) as $\left<\overrightarrow{AB},\overrightarrow{AD}\right>_{A}=\left<\overrightarrow{AB},\overrightarrow{AB}\right>_{A}$.
Then a similar argument shows that

\begin{equation}
\left<\overrightarrow{DB},\overrightarrow{AB}\right>_{A}=0.\label{eq:TA}
\end{equation}
Adding Equations (\ref{eq:TE}) and (\ref{eq:TA}) and using Axiom 1 gives us
\begin{equation}
\left<\overrightarrow{DB},\overrightarrow{DA}\right>_{A}+\left<\overrightarrow{DB},\overrightarrow{AB}\right>_{A} =
\left<\overrightarrow{DB},\overrightarrow{DB}\right>_{A}=0.\label{eq:TF}
\end{equation}
By Definition \ref{def:MeasureOf-any-arrow}, Equation (\ref{eq:TF})
means that $||\overrightarrow{DB}||_{A}^{2}=0$, that is $||\overrightarrow{DB}||_{A}=0$.
But by Lemma \ref{lem:||AB||=00003D0 IFF}, the equation $||\overrightarrow{DB}||_{A}=0$
implies that $D=_{P}B$ which contradicts our assumption that $B\neq_{P}D$.
Thus we conclude that $B=_{P}D$.
\end{proof}



 
 \begin{thm}\label{scalarmultasscocthm}
For any $s,\;t\in\mathbb{R}$ and any arrow $\overrightarrow{AB}$ in $\mathcal{P}_A$,
 $(s\;t)\overrightarrow{AB}=_{A}(s)((t)\;\overrightarrow{AB})$,
that is the arrow scalar multiplication is associative.
\end{thm}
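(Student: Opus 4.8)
The plan is to reduce to the nondegenerate case and then invoke the uniqueness that is packaged in Proposition \ref{lem:ABAD0}. First I would dispose of the degenerate cases. If $A=_{P}B$, then by Definition \ref{def:(Scalar-Multiplication-of} every scalar multiple of $\overrightarrow{AB}$ equals $\overrightarrow{AA}$, so both sides of the claimed identity are $\overrightarrow{AA}$. Likewise, if $s=0$ or $t=0$, then $st=0$, and one checks directly from Definition \ref{def:(Scalar-Multiplication-of} that both $(s)((t)\;\overrightarrow{AB})$ and $(st)\;\overrightarrow{AB}$ collapse to $\overrightarrow{AA}$. Hence it remains only to treat the case $A\neq_{P}B$ with $s\neq0$ and $t\neq0$, so that $st\neq0$ as well.

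In this case I would introduce names for the relevant points. Write $(t)\;\overrightarrow{AB}=_{A}\overrightarrow{AD}$, so that $(s)((t)\;\overrightarrow{AB})=_{A}(s)\;\overrightarrow{AD}=_{A}\overrightarrow{AE}$, and write $(st)\;\overrightarrow{AB}=_{A}\overrightarrow{AF}$, where $D,E,F\in\mathcal{P}$ are the points furnished by Definition \ref{def:(Scalar-Multiplication-of}. The goal then becomes to prove $\overrightarrow{AE}=_{A}\overrightarrow{AF}$, equivalently $E=_{P}F$. Since both arrows share the tail $A$, this is precisely the conclusion of Proposition \ref{lem:ABAD0}, so it suffices to verify its two hypotheses: that $||\overrightarrow{AE}||_{A}=||\overrightarrow{AF}||_{A}$ and that $\langle\overrightarrow{AE},\overrightarrow{AF}\rangle_{A}=||\overrightarrow{AE}||_{A}\,||\overrightarrow{AF}||_{A}$.

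The length equality is a routine application of Lemma \ref{lem:tAB}(1): on the one hand $||\overrightarrow{AF}||_{A}=|st|\,||\overrightarrow{AB}||_{A}$, and on the other hand $||\overrightarrow{AE}||_{A}=|s|\,||\overrightarrow{AD}||_{A}=|s|\,|t|\,||\overrightarrow{AB}||_{A}=|st|\,||\overrightarrow{AB}||_{A}$. For the inner-product condition I would expand using the bilinearity Equation (\ref{eq:A01}) together with Axiom 2 and Definition \ref{def:MeasureOf-any-arrow}: using $\overrightarrow{AE}=_{A}(s)\;\overrightarrow{AD}$, $\overrightarrow{AF}=_{A}(st)\;\overrightarrow{AB}$, and $\overrightarrow{AD}=_{A}(t)\;\overrightarrow{AB}$, one obtains $\langle\overrightarrow{AE},\overrightarrow{AF}\rangle_{A}=s(st)\langle\overrightarrow{AD},\overrightarrow{AB}\rangle_{A}=s(st)t\,||\overrightarrow{AB}||_{A}^{2}=(st)^{2}\,||\overrightarrow{AB}||_{A}^{2}$, which is exactly $||\overrightarrow{AE}||_{A}\,||\overrightarrow{AF}||_{A}$. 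I would stress that this computation is insensitive to the signs of $s$ and $t$, so no sign case-split is required: both scaled arrows end up pointing the same way. With both hypotheses of Proposition \ref{lem:ABAD0} confirmed, it follows that $E=_{P}F$, and hence $(st)\;\overrightarrow{AB}=_{A}(s)((t)\;\overrightarrow{AB})$.

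The one point I would be most careful about is the applicability of Proposition \ref{lem:ABAD0}, which implicitly requires $\overrightarrow{AE}$ and $\overrightarrow{AF}$ to be nontrivial so that the normalized pre-inner product $\tfrac{\langle\overrightarrow{AE},\overrightarrow{AF}\rangle_{A}}{||\overrightarrow{AE}||_{A}\,||\overrightarrow{AF}||_{A}}$ is defined; this is guaranteed by $st\neq0$ and $A\neq_{P}B$, which is precisely why the degenerate cases had to be separated out at the start. Everything else is bookkeeping with Axiom 2 and Lemma \ref{lem:tAB}.
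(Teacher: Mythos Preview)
Your proof is correct and follows essentially the same approach as the paper: dispose of the degenerate cases, then in the nondegenerate case show that the two candidate arrows $\overrightarrow{AE}$ and $\overrightarrow{AF}$ have equal measure and normalized pre-inner product equal to $1$, and invoke Proposition \ref{lem:ABAD0} to conclude $E=_{P}F$. The only cosmetic differences are the choice of point names and that the paper reads the length equalities directly from Definition \ref{def:(Scalar-Multiplication-of} rather than citing Lemma \ref{lem:tAB}, and verifies the normalized inner product equals $1$ whereas you equivalently verify the unnormalized version equals the product of the norms.
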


\begin{proof}
If $A=_{P}B$ or at least one of the real numbers $s$ or $t$ is zero,
then it follows directly from Definition \ref{def:(Scalar-Multiplication-of}
that $(st)\;\overrightarrow{AB}=_{A}(s)\;((t)\;\overrightarrow{AB})=_{A}\overrightarrow{AA}.$
Now let us assume $A\neq_{P}B$ and $s,t\neq0$. Let 
\begin{equation}
(t)\;\overrightarrow{AB}  =_{A}\overrightarrow{AC},\label{eq:st(AB)a}
\end{equation}
\begin{equation}
(s)\;((t)\;\overrightarrow{AB})  =_{A}(s)\;\overrightarrow{AC}=_{A}\overrightarrow{AD},\label{eq:st(AB)b}
\end{equation}
and
\begin{equation}
(s\;t)\;\overrightarrow{AB}  =_{A}\overrightarrow{AF},\label{eq:st(AB)c}
\end{equation}
for some points $C,\;D$, and $F$ in $\mathcal{P}$. We want
to show that $\overrightarrow{AD}=_{A}\overrightarrow{AF}$. By
Definition \ref{def:(Scalar-Multiplication-of} and Equations
(\ref{eq:st(AB)a}) and (\ref{eq:st(AB)b}) we have
\[
||\overrightarrow{AC}||_{A}=|t|\;||\overrightarrow{AB}||_{A},\qquad
||\overrightarrow{AD}||_{A}=|s|\;||\overrightarrow{AC}||_{A},
\]
which implies that
\begin{equation}
||\overrightarrow{AD}||_{A}=|s\;t|\;||\overrightarrow{AB}||_{A}.\label{eq:st(AB)}
\end{equation}
Also, Definition \ref{def:(Scalar-Multiplication-of} and Equation
(\ref{eq:st(AB)c}) imply that
\begin{equation}
||\overrightarrow{AF}||_{A}=|s\;t|\;||\;\overrightarrow{AB}||_{A}.\label{ANAN}
\end{equation}
We get from Equations (\ref{eq:st(AB)}) and (\ref{ANAN}) that
\begin{equation}
||\overrightarrow{AF}||_{A}=||\overrightarrow{AD}||_{A}.\label{eq:st(AB)=00003D5}
\end{equation}
Next we show that $\left<\frac{\overrightarrow{AD}}{||\overrightarrow{AD}||_{A}},\frac{\overrightarrow{AF}}{||\overrightarrow{AF}||_{A}}\right>_{A}=1$.
By Equations (\ref{eq:st(AB)b}) and (\ref{eq:st(AB)c}), and since $||\overrightarrow{AF}||_{A}=||\overrightarrow{AD}||_{A}=|s\;t|\;||\overrightarrow{AB}||_{A}$, we
have 
\[
\left<\frac{\overrightarrow{AD}}{||\overrightarrow{AD}||_{A}},\frac{\overrightarrow{AF}}{||\overrightarrow{AF}||_{A}}\right>_{A}=\left<\frac{s\;((t)\;\overrightarrow{AB})}{|s\;t|\;||\overrightarrow{AB}||_{A}},\frac{(s\;t)\;\overrightarrow{AB}}{|s\;t|\;||\overrightarrow{AB}||_{A}}\right>_{A}.
\]
Using Equation (\ref{eq:A01}) and Definition \ref{def:MeasureOf-any-arrow}, the preceding equation simplifies to 
\begin{equation}
\left<\frac{\overrightarrow{AD}}{||\overrightarrow{AD}||_{A}},\frac{\overrightarrow{AF}}{||\overrightarrow{AF}||_{A}}\right>_{A}=\frac{(s\;t)^{2}\;\left<\overrightarrow{AB},\overrightarrow{AB}\right>_{A}}{|s\;t|^{2}\;||\overrightarrow{AB}||_{A}^{2}}=1,\label{eq:st(AB)=00003D6}
\end{equation}
Then Proposition \ref{lem:ABAD0}
implies that $D=_{P}F$ as desired.
\end{proof}


\medskip
Recall that vector space $V$ has a unique vector $0$ which is the identity element with respect to addition, i.e. 
$v + 0 = v = 0 + v = v$ for all $v\in V$.
If we think of $\overrightarrow{AA}$ as an analog to the
zero vector, then the following theorem implies that
$\overrightarrow{AA}$ is a left additive identity for arrows that
have the point $A$ as a tail.
\begin{thm}
Given an arrow $\overrightarrow{AB}$ of $\mathcal{P}_{A}$, there is a 
a unique arrow $\overrightarrow{AA}$ (similarly, a unique arrow $\overrightarrow{BB}$)
such that 
\begin{equation}
\overrightarrow{AA}+_{A}\overrightarrow{AB}=_{A}\overrightarrow{AB},\label{eq:IDE}
\end{equation}
 (and similarly $\overrightarrow{AB}+_{A}\overrightarrow{BB}=_{A}\overrightarrow{AB}$).\label{thm:fg}
\end{thm}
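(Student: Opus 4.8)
The plan is to split the statement into existence and uniqueness, carry out the left-identity case $\overrightarrow{AA}$ in full, and observe that the right-identity case $\overrightarrow{BB}$ is entirely symmetric. Existence is immediate: since the head of $\overrightarrow{AA}$ is $A$, which is exactly the tail of $\overrightarrow{AB}$, Definition~\ref{def:ARROWSADDITIONHEADTAIL} applies directly and gives $\overrightarrow{AA}+_{A}\overrightarrow{AB}=_{A}\overrightarrow{AB}$. Thus the real content of the theorem lies in the uniqueness claim.

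For uniqueness I would exploit the fact that the restricted domain of $+_{A}$ does almost all of the work. Suppose $\overrightarrow{XY}$ is any arrow satisfying $\overrightarrow{XY}+_{A}\overrightarrow{AB}=_{A}\overrightarrow{AB}$. For the left-hand side even to be defined, Definition~\ref{def:ARROWSADDITIONHEADTAIL} forces the head of $\overrightarrow{XY}$ to coincide with the tail of $\overrightarrow{AB}$, that is $Y=_{P}A$. Hence $\overrightarrow{XY}=_{A}\overrightarrow{XA}$ for some point $X$, and the sum evaluates to $\overrightarrow{XA}+_{A}\overrightarrow{AB}=_{A}\overrightarrow{XB}$.

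It then remains to impose the requirement $\overrightarrow{XB}=_{A}\overrightarrow{AB}$. By Definition~\ref{def:equal arrows}, this equality holds if and only if $X=_{P}A$ (the condition $B=_{P}B$ being automatic). Therefore $\overrightarrow{XY}=_{A}\overrightarrow{AA}$, which establishes that $\overrightarrow{AA}$ is the unique left identity for $\overrightarrow{AB}$. The right-identity assertion is proved \emph{mutatis mutandis}: any $\overrightarrow{XY}$ with $\overrightarrow{AB}+_{A}\overrightarrow{XY}=_{A}\overrightarrow{AB}$ must satisfy $X=_{P}B$ for the sum to be defined, whence $\overrightarrow{AB}+_{A}\overrightarrow{BY}=_{A}\overrightarrow{AY}$, and demanding that this equal $\overrightarrow{AB}$ forces $Y=_{P}B$, so $\overrightarrow{XY}=_{A}\overrightarrow{BB}$.

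I do not anticipate a genuine obstacle here; the argument is purely a matter of tracking the well-definedness constraint of $+_{A}$ together with the equality criterion of Definition~\ref{def:equal arrows}. The one point that demands care is resisting the temptation to read ``unique additive identity'' in the usual vector-space sense. Here uniqueness is relative to the fixed arrow $\overrightarrow{AB}$, and it is precisely the partial nature of $+_{A}$ (already emphasized in Proposition~\ref{arrowadditionnotwelldefined}) that first pins down the head and then the tail of any candidate identity.
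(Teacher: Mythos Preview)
Your proof is correct and follows essentially the same approach as the paper: both establish existence directly from Definition~\ref{def:ARROWSADDITIONHEADTAIL}, then prove uniqueness by first using the well-definedness constraint of $+_{A}$ to force the head of any candidate identity to be $A$, and finally invoking Definition~\ref{def:equal arrows} on the resulting equation $\overrightarrow{XB}=_{A}\overrightarrow{AB}$ to pin down the tail. Your version is slightly more complete in that you also spell out the right-identity case, which the paper leaves implicit.
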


\begin{proof}
Let $\overrightarrow{AB}$ be an arrow in $\mathcal{P}_{A}$.
Since $\overrightarrow{AA}$ is well defined by Definition
\ref{def:ArrowDef.}, Definition \ref{def:ARROWSADDITIONHEADTAIL}
implies that $\overrightarrow{AA}+_{A}\overrightarrow{AB}=_{A}\overrightarrow{AB}.$
Now if there exists another arrow $\overrightarrow{CD}$ such that 
\begin{equation}
\overrightarrow{CD}+_{A}\overrightarrow{AB}=_{A}\overrightarrow{AB},\label{eq:TEL}
\end{equation}
then since $\overrightarrow{AB}$ is fixed,
Definition \ref{def:ARROWSADDITIONHEADTAIL} implies that
$D=_{P}A$.
Thus Equation (\ref{eq:TEL}) becomes
\begin{equation}
\overrightarrow{CA}+_{A}\overrightarrow{AB}=_{A}\overrightarrow{CB}.\label{eq:IDEN}
\end{equation}
Since Equations (\ref{eq:TEL}) and (\ref{eq:IDEN}) are equivalent,
comparing them yields $\overrightarrow{CB}=_{A}\overrightarrow{AB}$,
which means (by Definition \ref{def:equal arrows}) that we must have $C=_{P}A$.
Therefore, $\overrightarrow{CD}=_{A}\overrightarrow{AA}$.
\end{proof}

\medskip
For any $v\in V$, there is a unique element $-v$ (the additive inverse of $v$) such that
$v + (-v) =  -v + v = 0$. The following theorem is the arrow space analog of the additive inverse.
\begin{thm}
Let $\overrightarrow{AA}$ be fixed in $\mathcal{P}_{A}$.
For any arrow $\overrightarrow{AB}$ in $\mathcal{P}_{A}$ there
exists a unique arrow $\overrightarrow{BA}$ such that 
\begin{equation}
\overrightarrow{AB}+_{A}\overrightarrow{BA}=_{A}\overrightarrow{AA}.\label{eq:INV} 
\end{equation}.\label{thm:fgh}
\end{thm}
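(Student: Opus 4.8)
The plan is to prove both existence and uniqueness. Existence is immediate from the definitions established earlier: given $\overrightarrow{AB}$, the arrow $\overrightarrow{BA}$ has head-tail compatibility with $\overrightarrow{AB}$ (the head $B$ of the first matches the tail $B$ of the second), so Definition \ref{def:ARROWSADDITIONHEADTAIL} directly gives $\overrightarrow{AB}+_{A}\overrightarrow{BA}=_{A}\overrightarrow{AA}$. Thus $\overrightarrow{BA}$ is a witness to Equation (\ref{eq:INV}), and no computation beyond applying the addition rule is needed here.

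For uniqueness, I would mimic the structure of the preceding theorem (Theorem \ref{thm:fg}) on the additive identity. Suppose $\overrightarrow{CD}$ is any arrow satisfying $\overrightarrow{AB}+_{A}\overrightarrow{CD}=_{A}\overrightarrow{AA}$. First I would use Definition \ref{def:ARROWSADDITIONHEADTAIL}: the sum $\overrightarrow{AB}+_{A}\overrightarrow{CD}$ is only defined when the head of the first arrow equals the tail of the second, which forces $C=_{P}B$. Under this constraint the addition rule gives $\overrightarrow{AB}+_{A}\overrightarrow{BD}=_{A}\overrightarrow{AD}$. Comparing this with the hypothesis $\overrightarrow{AB}+_{A}\overrightarrow{CD}=_{A}\overrightarrow{AA}$ yields $\overrightarrow{AD}=_{A}\overrightarrow{AA}$, and then Definition \ref{def:equal arrows} forces $D=_{P}A$. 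Hence $\overrightarrow{CD}=_{A}\overrightarrow{BA}$, establishing uniqueness.

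The main subtlety, which I expect to be the only real obstacle, is being careful about the noncommutativity of $+_{A}$ (Proposition \ref{prop:arr.add not comutv}) and about which compatibility condition is being imposed. The theorem statement fixes $\overrightarrow{AA}$ and asks for the unique $\overrightarrow{BA}$ with $\overrightarrow{AB}+_{A}\overrightarrow{BA}=_{A}\overrightarrow{AA}$, so I must drive the uniqueness argument purely from the head-tail matching requirement rather than assuming any cancellation law that would be valid for vectors but not arrows. The key observation enforcing uniqueness is that the tail of the second summand is pinned down by the head of $\overrightarrow{AB}$ (forcing $C=_{P}B$), after which the requirement that the resulting head equal $A$ pins down $D$. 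I would present the argument entirely in terms of Definitions \ref{def:equal arrows} and \ref{def:ARROWSADDITIONHEADTAIL}, exactly as in the identity theorem, so that no reliance on the commutative or cancellative behavior of ordinary vector addition creeps in.
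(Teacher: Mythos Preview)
Your proposal is correct and follows essentially the same argument as the paper: existence is immediate from Definition~\ref{def:ARROWSADDITIONHEADTAIL}, and uniqueness is obtained by noting that the head--tail compatibility forces $C=_{P}B$, after which $\overrightarrow{AD}=_{A}\overrightarrow{AA}$ forces $D=_{P}A$ via Definition~\ref{def:equal arrows}. Your explicit remark about avoiding any reliance on commutativity or cancellation is well taken and matches the spirit of the paper's treatment.
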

\begin{proof}
Let $\overrightarrow{AA}$ be fixed. For any point $B\in\mathcal{P}$,
 $\overrightarrow{AB}$ is in $\mathcal{P}_{A}$. By
Definition \ref{def:ArrowDef.}, $\overrightarrow{BA}$
is also in $\mathcal{P}_{A}$, and Definition \ref{def:ARROWSADDITIONHEADTAIL}
implies that
\[
\overrightarrow{AB}+_{A}\overrightarrow{BA}=_{A}\overrightarrow{AA}.
\]
Now if exists another arrow $\overrightarrow{CD}$ such that 
\begin{equation}
\overrightarrow{AB}+_{A}\overrightarrow{CD}=_{A}\overrightarrow{AA},\label{eq:KA}
\end{equation}
then since $\overrightarrow{AB}$ is fixed,
the only way for $\overrightarrow{AB}+_{A}\overrightarrow{CD}$
to be defined is to have $C=_{P}B$.
Thus if we change $C$ into $B$ in the Equation (\ref{eq:KA}) and
use Definition \ref{def:ARROWSADDITIONHEADTAIL}, we would have
\begin{equation}
\overrightarrow{AB}+_{A}\overrightarrow{BD}=_{A}\overrightarrow{AD}.\label{eq:IDEN-1}
\end{equation}
Since Equations (\ref{eq:KA}) and (\ref{eq:IDEN-1}) are equivalent,
comparing them yields $\overrightarrow{AD}=_{A}\overrightarrow{AA}$
which means ( by Definition \ref{def:equal arrows}), that 
$D=_{P}A$.
Thus, $\overrightarrow{CD}=_{A}\overrightarrow{BA}$.
\end{proof}

\medskip
The following proposition, the arrows space analog of the vector space property $s0 = 0$, is a direct consequence of Definition
\ref{def:(Scalar-Multiplication-of}. 
\begin{prop}
\label{prop:sAA}For any arrow $\overrightarrow{AA}$ in $\mathcal{P}_{A}$ and any $s\in\mathbb{R}$, $(s)\overrightarrow{AA}=_{A}\overrightarrow{AA}$.
\end{prop}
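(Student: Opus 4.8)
The plan is to read off the result directly from the first clause of Definition \ref{def:(Scalar-Multiplication-of}. First I would note that the arrow $\overrightarrow{AA}$ is, by Definition \ref{def:ArrowDef.}, the ordered pair $(A,A)$, so its tail and its head coincide; that is, if we write $\overrightarrow{AA}$ in the form $\overrightarrow{AB}$ then $B =_P A$. This places $\overrightarrow{AA}$ squarely in the first case of Definition \ref{def:(Scalar-Multiplication-of}, which stipulates that whenever the tail equals the head (i.e.\ $A =_P B$), the scalar multiple is declared to be $\overrightarrow{AA}$, independent of the scalar.

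Concretely, I would fix an arbitrary $s \in \mathbb{R}$ and simply apply Definition \ref{def:(Scalar-Multiplication-of} with the understanding that the tail and head of $\overrightarrow{AA}$ are the single point $A$. Since the hypothesis $A =_P B$ of the first case holds automatically, the definition yields $(s)\overrightarrow{AA} =_A \overrightarrow{AA}$ at once, with no need to separate the argument into the subcases $s = 0$ and $s \neq 0$. The two triggering conditions of the definition are ``$A =_P B$ or $t = 0$,'' and here the first of these is satisfied regardless of $s$, so the value of $s$ is irrelevant.

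I do not anticipate any genuine obstacle: the proposition is essentially a restatement of the first clause of the scalar multiplication definition specialized to a zero arrow, and the text already flags it as ``a direct consequence of Definition \ref{def:(Scalar-Multiplication-of}.'' The only point worth stating explicitly for rigor is the observation that a zero arrow $\overrightarrow{AA}$ always meets the condition $A =_P B$, so that the construction of an auxiliary point $D$ (required in the second case of the definition) is never invoked.
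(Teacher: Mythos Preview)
Your proposal is correct and matches the paper's approach exactly: the paper states the proposition as ``a direct consequence of Definition \ref{def:(Scalar-Multiplication-of}'' and provides no further argument, so your explicit unpacking of the first clause of that definition (noting that $\overrightarrow{AA}$ has coinciding tail and head, hence the case $A =_P B$ applies regardless of $s$) is precisely the intended justification.
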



\medskip
In a vector space $V$, given $v\in V$, there is a unique scalar, denoted $1$, such that $1v = v$.
The scalar multiplication identity in an arrow space
$\mathcal{P}_{A}$ can also be identified as follows.
\begin{thm}
\label{thm:1 AB=00003DAB}For any arrow $\overrightarrow{AB}$ in
an arrow space $\mathcal{P}_{A}$, $(1)\overrightarrow{AB}=_{A}\overrightarrow{AB}$.
\end{thm}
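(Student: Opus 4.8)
The plan is to apply Definition \ref{def:(Scalar-Multiplication-of} with $t=1$ and split into the two cases dictated by that definition. First I would dispose of the degenerate case $A=_{P}B$: here the definition gives $(1)\overrightarrow{AB}=\overrightarrow{AA}$ directly, and since $A=_{P}B$ we have $\overrightarrow{AA}=_{A}\overrightarrow{AB}$, so the claim holds at once.

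The substantive case is $A\neq_{P}B$, where $t=1\neq0$. Here the definition produces a point $D$ with $(1)\overrightarrow{AB}=_{A}\overrightarrow{AD}$, so the goal reduces to proving $D=_{P}B$. Reading off the two conditions of the definition: part (1) gives $||\overrightarrow{AD}||_{A}=|1|\,||\overrightarrow{AB}||_{A}=||\overrightarrow{AB}||_{A}$, and part (2), since $1>0$, gives $\left<\overrightarrow{AB},\overrightarrow{AD}\right>_{A}=||\overrightarrow{AB}||_{A}\,||\overrightarrow{AD}||_{A}$. Before invoking the ratio form I would note that $A\neq_{P}B$ forces $||\overrightarrow{AB}||_{A}\neq0$ by Lemma \ref{lem:||AB||=00003D0 IFF}, and the measure equality then makes $||\overrightarrow{AD}||_{A}\neq0$ as well, so the quotient below is well defined.

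These two facts are exactly the hypotheses of Proposition \ref{lem:ABAD0}: the measures coincide and
\[
\frac{\left<\overrightarrow{AB},\overrightarrow{AD}\right>_{A}}{||\overrightarrow{AB}||_{A}\,||\overrightarrow{AD}||_{A}}=1.
\]
Applying that proposition yields $B=_{P}D$, hence $\overrightarrow{AD}=_{A}\overrightarrow{AB}$ by Definition \ref{def:equal arrows}, and therefore $(1)\overrightarrow{AB}=_{A}\overrightarrow{AB}$, finishing the proof. There is no real obstacle here; the statement is essentially a bookkeeping consequence of the scalar-multiplication definition once Proposition \ref{lem:ABAD0} is available. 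The only point demanding any care is confirming the nondegeneracy $||\overrightarrow{AB}||_{A}\neq0$ so that Proposition \ref{lem:ABAD0} legitimately applies, together with remembering to treat the $A=_{P}B$ case separately since there the uniqueness machinery of Proposition \ref{lem:ABAD0} is neither needed nor available.
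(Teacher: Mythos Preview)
Your proof is correct and follows essentially the same approach as the paper: handle $A=_{P}B$ trivially, then in the nondegenerate case read off the two conditions of Definition~\ref{def:(Scalar-Multiplication-of} with $t=1$ and conclude $D=_{P}B$. The one small difference is that you invoke Proposition~\ref{lem:ABAD0} directly to force $D=_{P}B$, whereas the paper appeals to the forward-referenced uniqueness remark following Definition~\ref{def:(Scalar-Multiplication-of}; your route is the cleaner one, since Proposition~\ref{lem:ABAD0} is already available at this point and gives exactly the needed conclusion without any forward dependence.
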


\begin{proof}
If $A=_{P}B$, then the result follows from Proposition \ref{prop:sAA}.
So assume $A\neq_{P}B$. Lemma \ref{lem:||AB||=00003D0 IFF}
implies that $||\overrightarrow{AB}||_{A}\neq0$. 
Definition \ref{def:(Scalar-Multiplication-of} implies that $(1)\overrightarrow{AB} = \overrightarrow{AD}$ for some point $D$ such that
\begin{equation}
||\overrightarrow{AD}|| = ||(1)\overrightarrow{AB}||_{A}=||\overrightarrow{AB}||_{A},\label{eq:D01}
\end{equation}
and
\begin{equation}
\left<\frac{\overrightarrow{AD}}{||\overrightarrow{AD}||},\;\frac{\overrightarrow{AB}}{||\overrightarrow{AB}||_{A}}\right>_{A} = 1.\label{eq:D02}
\end{equation}
From Equations (\ref{eq:D01}), and (\ref{eq:D02}) we would like to conclude that $D=_{P}B$.
But this will follow from the uniqueness of the point $D$ as referenced by the remark that followed 
Definition \ref{def:(Scalar-Multiplication-of}.
\end{proof}

\medskip
In a vector space $V$, multiplication by the scalar $0$ always returns the zero vector, i.e. $0v = 0$ for any $v\in V$.
We end this section with the following proposition which provides the arrow space analog, where arrows of the form $\overrightarrow{AA}$
correspond to the zero vector.

\begin{prop}
\label{prop:zeroAB}For any arrow $\overrightarrow{AB}$ in $\mathcal{P}_{A}$, $(0)\overrightarrow{AB}=_{A}\overrightarrow{AA}$.
\end{prop}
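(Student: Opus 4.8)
The plan is to observe that this proposition is an immediate instance of Definition \ref{def:(Scalar-Multiplication-of}, so essentially no work beyond a correct invocation of the definition is required. Recall that Definition \ref{def:(Scalar-Multiplication-of} specifies the value of $(t)\overrightarrow{AB}$ by a case split on the scalar $t$ and on whether $A=_{P}B$. Its very first clause reads: ``If $A=_{P}B$, \emph{or} $t=0$, we put $(t)\overrightarrow{AB}=\overrightarrow{AA}$.'' The hypothesis of the present proposition fixes $t=0$, which lands us squarely in this first clause regardless of the relationship between $A$ and $B$.

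Accordingly, I would first note that the arrow $\overrightarrow{AB}$ is arbitrary, so in particular we make no assumption about whether $A=_{P}B$ or $A\neq_{P}B$. I would then set $t=0$ and point out that the disjunctive condition ``$A=_{P}B$ or $t=0$'' is satisfied purely on account of $t=0$. Applying the corresponding assignment in Definition \ref{def:(Scalar-Multiplication-of} then yields $(0)\overrightarrow{AB}=_{A}\overrightarrow{AA}$ directly, and the proof is complete.

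There is genuinely no obstacle here: the statement is definitional, and no appeal to Axiom 1, Axiom 2, the measure $||-||_{A}$, or any earlier theorem is needed. The only point deserving the slightest care is to emphasize that the conclusion holds \emph{uniformly} in $\overrightarrow{AB}$ — one should not be tempted to split into the subcases $A=_{P}B$ and $A\neq_{P}B$ and handle them separately, since the $t=0$ branch of the definition already subsumes both. I would therefore present the argument as a single sentence invoking Definition \ref{def:(Scalar-Multiplication-of} with $t=0$, which mirrors exactly how the companion fact Proposition \ref{prop:sAA} was obtained from the same definition.
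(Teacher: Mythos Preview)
Your proposal is correct and matches the paper's own proof exactly: both simply invoke Definition \ref{def:(Scalar-Multiplication-of} with $t=0$ to conclude $(0)\overrightarrow{AB}=_{A}\overrightarrow{AA}$ in a single line. There is nothing to add.
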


\begin{proof}
Let $\overrightarrow{AB}$ be an arrow in $\mathcal{P}_{A}$. 
By Definition \ref{def:(Scalar-Multiplication-of}, $(0)\;\overrightarrow{AB}=_{A}\overrightarrow{AA}$.
\end{proof}

\section{\label{sec:4.Eq.Clss}Lines in an Arrow Space}

The line is a crucial concept of Euclidean geometry since it appears within three of Euclid's original five axioms.  
Because an arrow space is to be the precursor of the Euclidean plane reinterpreted 
as a two-dimensional vector space, we will need to define what is meant by a line $l$ within 
an abstract arrow space $\mathcal{P}_A$.  For any fixed 
$\overrightarrow{AB}\in \mathcal{P}_A$, we can use scalar multiplication to define a set of points with $\mathcal{P}_A$ 
determined by the ``direction" of $\overrightarrow{AB}$. This aforementioned set of points in $\mathcal{P}_A$ is the line $l$. 
This set of points on $l$, namely $\mathcal{P}_{l}$, gives rise to a one-dimensional arrow space $\mathcal{P}_{l_A}$ contained within $\mathcal{P}_{A}$.
When considered as an independent arrow space apart from $\mathcal{P}_{A}$, the arrow space $\mathcal{P}_{l_A}$ has the property that parallel postulate of the Euclidean plane is no longer an axiom but a theorem; see Subsection 5.3.  This surprising result justifies an analysis of $\mathcal{P}_{l_A}$ in its own right and is focus of the two subsections contained within this section. But all of this analysis hinges on the definition and existence of a line within an arrow space. Thus we begin with this pivotal definition.

\begin{defn}
\label{def:A LINE} Let $\mathcal{P}_A$ be a given arrow space.
Given a fixed point $A\in \mathcal{P}$ and an arrow $\overrightarrow{AB}\in \mathcal{P}_A$
with $A\neq_{P}B$, a line containing $A$ and $B$, denoted by $l_{AB}$ or $l$, 
is the set of all the heads of arrows $\overrightarrow{AC}$
such that there exists $t\in\mathbb{R}$ with $\overrightarrow{AC}=_{A}(t)\;\overrightarrow{AB}$.
The set of points on Line $l$ is denoted by $\mathcal{P}_{l_{AB}}$ or $\mathcal{P}_{l}$ when
$\overrightarrow{AB}$ is understood. 
The set of arrows determined by $\mathcal{P}_l$ is the arrow 
space $\mathcal{P}_{l_A}$.
\end{defn}


\medskip
To justify the existence of a line between any two distinct points $A$ and $B$ of $\mathcal{P}$,
we consider the next axiom which matches points on a line with
the real numbers.

\medskip 
\noindent
{\bf Axiom 3}. Let $l$ be a line containing the points $A$ and $B$. 
There exists a one-to-one correspondence between the set of real numbers and the set of points
that lie on $l$.

\medskip
Axiom 3 shows that Line $l_{AB}$ is a nonempty set of $\mathcal{P}$; see the proof of Theorem \ref{thm:WAS.Axiom (3)}.
Furthermore, since the real line is a totally ordered set, Axiom 3 provides a geometric intuition of order (or betweenness) of points in $\mathcal{P}_l$ 
associated with Line $l$. The official algebraic definition of this geometric intuition is provided below.

\medskip
\begin{defn}
\label{def:orderof2points}Let $A,\;B$, and $C$ be any three distinct
points on Line $l$. We say that the point $B$ is between $A$ and $C$
if $\left<\frac{\overrightarrow{BA}}{||\overrightarrow{BA}||_{A}},\frac{\overrightarrow{BC}}{||\overrightarrow{BC}||_{A}}\right>_{A}=-1$.
If $B$ is between $A$ and $C$, we write $A\prec B\prec C$.
\end{defn}
\begin{figure}[ht]
\centering
\includegraphics[width=0.7\linewidth]{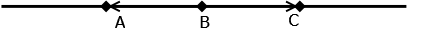}
\caption{A depiction of $A\prec B\prec C$, where $A$, $B$, and $C$ are three points on Line $l$.}
\label{fig:Definition26}
\end{figure}

\medskip
The notion of betweenness provided by Definition \ref{def:orderof2points}, along with the correspondence between the 
real line and the line $l$ determined by $\overrightarrow{AB}$, allows us to illustrate an abstract arrow $\overrightarrow{AB}$ 
as a directed finite line segment which starts at the Tail $A$ and ends at Head $B$, where the direction is recorded by the 
placement of an arrowhead on the head point $B$.  We have been using this illustration convention throughout the previous 
sections even though we had no concept of betweenness and an arrow $\overrightarrow{AB}$, in all rigor, should have 
been depicted as two discrete points $A$ and $B$.

\subsection{Existence of Line}

We are interested in proving an arrow space analog of Euclid's first axiom, namely that given any two distinct points, there exists a {\it unique} line which
contains the two given points; see Theorem \ref{thm:WAS.Axiom (3)}. Before we do this, we introduce
a lemma which is used in the proof of Theorem \ref{thm:WAS.Axiom (3)}.
This lemma provides an arrow pre-inner product constraint for two unit arrows which lie on the same line, 
namely that the unit arrows have the same direction (pre-inner product is $1$) or that the unit arrows have opposite directions (pre-inner product is $-1$).
\begin{lem}
\label{lem:LAAALM}If $A$, $B$, $M$, and $L$ are four distinct points of Line $l_{AB}$, then
\begin{equation}
\frac{\left<\overrightarrow{ML},\overrightarrow{AB}\right>_{A}}{||\overrightarrow{ML}||_{A}\;||\overrightarrow{AB}||_{A}}=\pm1.\label{eq:LAAAM}
\end{equation}
\end{lem}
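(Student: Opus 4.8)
The plan is to parametrize all four points by their signed position relative to $A$ along the line and then reduce the ratio to a sign. Since $M$ and $L$ lie on $l_{AB}$, Definition \ref{def:A LINE} furnishes scalars $\mu,\lambda\in\mathbb{R}$ with $\overrightarrow{AM}=_{A}(\mu)\overrightarrow{AB}$ and $\overrightarrow{AL}=_{A}(\lambda)\overrightarrow{AB}$; these scalars are unique by Theorem \ref{thm:aABbAB}, though only their existence is needed here. Because $A\neq_{P}B$, Lemma \ref{lem:||AB||=00003D0 IFF} gives $||\overrightarrow{AB}||_{A}\neq0$, and because $M\neq_{P}L$ the same lemma will force $\mu\neq\lambda$, which is what keeps the denominator of \eqref{eq:LAAAM} from vanishing.

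First I would compute the numerator. Splitting $\overrightarrow{ML}=_{A}\overrightarrow{MA}+_{A}\overrightarrow{AL}$ (legal by Definition \ref{def:ARROWSADDITIONHEADTAIL}, since the head $A$ of the first arrow is the tail of the second), the arrow-addition linearity of Axiom 1, the negation rule together with $\overrightarrow{MA}=_{A}-\overrightarrow{AM}$ (Definition \ref{def:(-)Minuse.arrowAB}), and the scalar-multiplication linearity of Axiom 2 combine to give $\left<\overrightarrow{ML},\overrightarrow{AB}\right>_{A}=(\lambda-\mu)\,||\overrightarrow{AB}||_{A}^{2}$.

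Next I would compute the denominator by evaluating $||\overrightarrow{ML}||_{A}^{2}=\left<\overrightarrow{ML},\overrightarrow{ML}\right>_{A}$. Expanding $\overrightarrow{ML}=_{A}\overrightarrow{MA}+_{A}\overrightarrow{AL}$ in \emph{both} slots via the bilinear identity \eqref{eq:AXIOMc} and applying the negation and scalar rules produces the four terms $\mu^{2}$, $-\mu\lambda$, $-\mu\lambda$, $\lambda^{2}$, each multiplied by $||\overrightarrow{AB}||_{A}^{2}$, so that $||\overrightarrow{ML}||_{A}^{2}=(\lambda-\mu)^{2}||\overrightarrow{AB}||_{A}^{2}$ and hence $||\overrightarrow{ML}||_{A}=|\lambda-\mu|\,||\overrightarrow{AB}||_{A}$.

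Finally, substituting numerator and denominator into \eqref{eq:LAAAM}, the common factor $||\overrightarrow{AB}||_{A}^{2}$ cancels and the ratio collapses to $(\lambda-\mu)/|\lambda-\mu|$, which is $\pm1$ precisely because $\mu\neq\lambda$ (the value being $+1$ when $\mu<\lambda$ and $-1$ when $\mu>\lambda$). The only step requiring genuine care is the sign bookkeeping in expanding $\left<\overrightarrow{ML},\overrightarrow{ML}\right>_{A}$: one must apply the negation rule correctly to each occurrence of $\overrightarrow{MA}$ so that the two cross terms both come out as $-\mu\lambda\,||\overrightarrow{AB}||_{A}^{2}$, yielding the clean perfect square $(\lambda-\mu)^{2}$. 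Everything else is a routine application of Axioms 1 and 2 together with Lemma \ref{lem:||AB||=00003D0 IFF}.
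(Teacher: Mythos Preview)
Your proposal is correct and follows essentially the same approach as the paper: parametrize $M$ and $L$ on $l_{AB}$ via scalar multiplication, decompose $\overrightarrow{ML}=_{A}\overrightarrow{MA}+_{A}\overrightarrow{AL}$, expand numerator and denominator using Axioms 1 and 2, and reduce the ratio to $(\lambda-\mu)/|\lambda-\mu|$. The paper's proof is organized slightly differently but uses exactly the same computation with $t_M,t_L$ in place of your $\mu,\lambda$.
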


\begin{figure}[ht]
\centering
\includegraphics[width=0.7\linewidth]{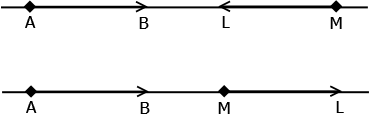}
\caption{Points on Line $l_{AB}$ illustrating Equation (\ref{eq:LAAAM}).}
\label{fig:Lemma33}
\end{figure}

\begin{proof}
Let $A$, $B$, $M$, and $L$ be four distinct points of line $l_{AB}$. By Definition \ref{def:A LINE} there exists
$t_{M},\;t_{L}\in\mathbb{R}$ such that 
\begin{equation}
\overrightarrow{AM}  =_{A}(t_{M})\overrightarrow{AB},\qquad \overrightarrow{AL}  =_{A}(t_{L})\overrightarrow{AB},\label{eq:Epua}
\end{equation}
which when combined with Definition \ref{def:(-)Minuse.arrowAB} implies that
\begin{equation}
\overrightarrow{MA}=_{A}-((t_{M})\overrightarrow{AB}).\label{eq:MATMA}
\end{equation}
Since $\overrightarrow{MA}+_{A}\overrightarrow{AL}=_{A}\overrightarrow{ML}$, we find through various applications of Axioms 1 and 2 that

\begin{equation}\label{eq:INNABML}
\frac{\left<\overrightarrow{ML},\overrightarrow{AB}\right>_{A}}{||\overrightarrow{ML}||_{A}\;||\overrightarrow{AB}||_{A}}
=\frac{(-t_M + t_L)\left<\overrightarrow{AB},\overrightarrow{AB}\right>_{A}}{||\overrightarrow{MA}+_{A}\overrightarrow{AL}||_{A}\;||\overrightarrow{AB}||_{A}}.
\end{equation}
We can also simplify the denominator of 
Equation (\ref{eq:INNABML}) in a similar manner, namely
\begin{equation}
||\overrightarrow{MA} +_A \overrightarrow{AL}||_A =  \sqrt{(t_{L}-t_{M})^{2}}\;||\overrightarrow{AB}||_{A}.\label{eq:TSQU}
\end{equation}
Equation (\ref{eq:TSQU}) implies that 
\begin{equation}
||\overrightarrow{MA}+_{A}\overrightarrow{AL}||_{A}\;||\overrightarrow{AB}||_{A}=|t_{L}-t_{M}|\;||\overrightarrow{AB}||_{A}^{2}.\label{eq:DENOM}
\end{equation}
Plugging the result of Equation (\ref{eq:DENOM}),
(notice that $t_{L}-t_{M}\neq0$ since otherwise we would have $L=_{P}M$
which is a contradiction to the hypotheses of this lemma), into the
right hand side of the Equation (\ref{eq:INNABML}) yields
\[
\frac{\left<\overrightarrow{ML},\overrightarrow{AB}\right>_{A}}{||\overrightarrow{ML}||_{A}\;||\overrightarrow{AB}||_{A}}=\frac{(t_{L}-t_{M})\;||\overrightarrow{AB}||_{A}^{2}}{|t_{L}-t_{M}|\;||\overrightarrow{AB}||_{A}^{2}}=\pm1.
\]
\end{proof}

\medskip
We next provide a pre-inner product condition for determining when a given point lies on a fixed line; see Theorems \ref{thm:ONETWO} and \ref{thm:ONETWO1}. Since the set of points $\mathcal{P}_l$ of $l_{AB}$ is
contained within underlying the set of points $\mathcal{P}$ of $\mathcal{P}_A$, we can examine the following situation.
Suppose for some point $D\in\mathcal{P}$ distinct from $A$ and $B$, we have $\frac{\left<\overrightarrow{AB},\overrightarrow{AD}\right>_{A}}{||\overrightarrow{AB}||_{A}\;||\overrightarrow{AD}||_{A}}=1$,
then $D\in\mathcal{P}_l$, see Figure 5.2.
\begin{figure}[ht]\label{fig:Points-on-a}
\includegraphics[scale=0.45]{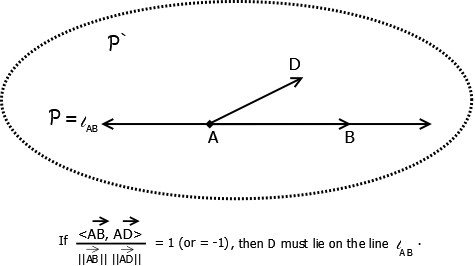}
\caption{An illustration for when a given point lies on a fixed line.}
\end{figure}

\begin{thm}
\label{thm:ONETWO}Let $A,\;B,\;D$ be three distinct points of $\mathcal{P}$
such that
$\frac{\left<\overrightarrow{AB},\overrightarrow{AD}\right>_{A}}{||\overrightarrow{AB}||_{A}\;||\overrightarrow{AD}||_{A}}=1$.
Then the following is true:
\begin{itemize}
\item[1.]
 \[
 \frac{\left<\overrightarrow{AB},\overrightarrow{BD}\right>}{||\overrightarrow{AB}||_{A}\;||\overrightarrow{BD}||_{A}}=\pm1;\]

\item[2.] either 
\[
||\overrightarrow{AD}||_{A} = ||\overrightarrow{AB}||_{A}+||\overrightarrow{BD}||_{A},\]
\begin{figure}[ht]
\centering
\includegraphics[width=0.5\linewidth]{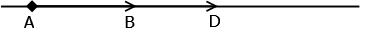}
\caption{A depiction of three points $A,\;B,\;D$ where $||\protect\overrightarrow{AD}||_{A} = ||\protect\overrightarrow{AB}||_{A}+||\protect\overrightarrow{BD}||_{A}$.}
\label{fig:THEOREM361}
\end{figure}

or
\[
||\overrightarrow{AB}||_{A} = ||\overrightarrow{AD}||_{A}+||\overrightarrow{DB}||_{A};
\]
\begin{figure}[ht]
\centering
\includegraphics[width=0.5\linewidth]{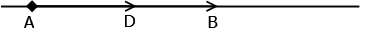}
\caption{A depiction of three points $A,\;B,\;D$ when $||\protect\overrightarrow{AB}||_{A} = ||\protect\overrightarrow{AD}||_{A}+||\protect\overrightarrow{DB}||_{A}$.}
\label{fig:THEOREM362}
\end{figure}

\item[3.]
there exists $t_{D}\in\mathbb{R}$ with $t_D > 0$ such that $\overrightarrow{AD}=_{A}(t_{D})\overrightarrow{AB}$;
thus the point $D$ lies on the line $l_{AB}$; see Figure 5.4.
\end{itemize}
\end{thm}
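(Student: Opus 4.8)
The plan is to reduce all three parts to a single computation, namely expressing $||\overrightarrow{BD}||_A$ in terms of $a:=||\overrightarrow{AB}||_A$ and $d:=||\overrightarrow{AD}||_A$, both positive since $A$, $B$, $D$ are distinct. Rewriting the hypothesis as $\left<\overrightarrow{AB},\overrightarrow{AD}\right>_A=a\,d$, I would first decompose $\overrightarrow{BD}=_A\overrightarrow{BA}+_A\overrightarrow{AD}$ and expand $\left<\overrightarrow{BD},\overrightarrow{BD}\right>_A$ using the bilinearity of Equation (\ref{eq:AXIOMc}), the negation rule of Equation (\ref{eq:A01-1}), Lemma \ref{lem:tAB}, and the hypothesis, in the style of Proposition \ref{lem:ABAD0}. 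This gives $||\overrightarrow{BD}||_A^2=a^2-2ad+d^2=(d-a)^2$, so $||\overrightarrow{BD}||_A=|d-a|$. The same decomposition yields $\left<\overrightarrow{AB},\overrightarrow{BD}\right>_A=-a^2+ad=a(d-a)$, and Part 1 follows immediately:
\[
\frac{\left<\overrightarrow{AB},\overrightarrow{BD}\right>_A}{||\overrightarrow{AB}||_A\,||\overrightarrow{BD}||_A}=\frac{a(d-a)}{a\,|d-a|}=\frac{d-a}{|d-a|}=\pm 1,
\]
the quotient being legitimate because $B\neq_P D$ forces $d\neq a$.

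For Part 2 I would split on the sign of $d-a$. If $d>a$ then $||\overrightarrow{BD}||_A=d-a$, so $||\overrightarrow{AD}||_A=a+(d-a)=||\overrightarrow{AB}||_A+||\overrightarrow{BD}||_A$; if $d<a$ then $||\overrightarrow{DB}||_A=||\overrightarrow{BD}||_A=a-d$ by Lemma \ref{lem:tAB}, so $||\overrightarrow{AB}||_A=d+(a-d)=||\overrightarrow{AD}||_A+||\overrightarrow{DB}||_A$. These are exactly the two stated alternatives, and they line up with the sign produced in Part 1.

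For Part 3 I would set $t_D:=d/a>0$. By Definition \ref{def:(Scalar-Multiplication-of} there is a point $D'$ with $(t_D)\overrightarrow{AB}=_A\overrightarrow{AD'}$ and $||\overrightarrow{AD'}||_A=t_D\,||\overrightarrow{AB}||_A=d=||\overrightarrow{AD}||_A$. Using Equation (\ref{eq:A01s}), symmetry, and the hypothesis,
\[
\left<\overrightarrow{AD},\overrightarrow{AD'}\right>_A=t_D\left<\overrightarrow{AD},\overrightarrow{AB}\right>_A=\frac{d}{a}\,(a\,d)=d^2=||\overrightarrow{AD}||_A\,||\overrightarrow{AD'}||_A,
\]
so the normalized pre-inner product of $\overrightarrow{AD}$ and $\overrightarrow{AD'}$ equals $1$. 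Since their measures also agree, Proposition \ref{lem:ABAD0} yields $D'=_P D$, hence $\overrightarrow{AD}=_A(t_D)\overrightarrow{AB}$ with $t_D>0$; by Definition \ref{def:A LINE} the point $D$ lies on $l_{AB}$.

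The only genuinely delicate step is the norm identity $||\overrightarrow{BD}||_A=|d-a|$; once it is established, Part 1 is immediate, Part 2 is a two-case sign analysis, and Part 3 reduces to exhibiting the scalar $t_D=d/a$ and invoking the uniqueness Proposition \ref{lem:ABAD0}. A secondary point to watch is the bookkeeping with the negation rule, in particular $\left<\overrightarrow{BA},\overrightarrow{AD}\right>_A=-a\,d$, so that the cross terms in the expansion of $||\overrightarrow{BD}||_A^2$ combine into $-2ad$ correctly.
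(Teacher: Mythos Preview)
Your proof is correct and takes a cleaner route than the paper's. The paper proves Part~1 by expanding the hypothesis via $\overrightarrow{AD}=_{A}\overrightarrow{AB}+_{A}\overrightarrow{BD}$, obtaining a relation it then squares to extract $\left<\overrightarrow{AB},\overrightarrow{BD}\right>_A^2=||\overrightarrow{AB}||_A^2\,||\overrightarrow{BD}||_A^2$; for Part~2 it introduces a second decomposition $\overrightarrow{AB}=_{A}\overrightarrow{AD}+_{A}\overrightarrow{DB}$ and factors the resulting identity; for Part~3 it defines $t_D$ case-by-case from the two alternatives in Part~2 (e.g.\ $t_D=1+||\overrightarrow{BD}||_A/||\overrightarrow{AB}||_A$). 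By contrast, you compute $||\overrightarrow{BD}||_A=|d-a|$ directly from a single expansion, which immediately yields Part~1 with an explicit sign, makes Part~2 a transparent two-case reading of $|d-a|$, and lets you take the uniform choice $t_D=d/a$ in Part~3 without splitting into cases. Your approach is shorter and exposes the underlying arithmetic more clearly; the paper's approach, while correct, obscures the simple identity $||\overrightarrow{BD}||_A=|d-a|$ behind a squaring-and-factoring manoeuvre.
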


\begin{proof}
1) First notice that it is not given that $A,\;B$, and $D$ lie
on the same line; in fact, we need to prove this. So let $A,\;B$, and $D$ be three distinct
points of $\mathcal{P}$ such that 
\begin{equation}
\frac{\left<\overrightarrow{AB},\overrightarrow{AD}\right>_{A}}{||\overrightarrow{AB}||_{A}\;||\overrightarrow{AD}||_{A}}=1,\label{eq:A}
\end{equation}
Since $\overrightarrow{AD}=_{A}\overrightarrow{AB}+_{A}\overrightarrow{BD}$, applications of Axiom 1 and Definition \ref{def:MeasureOf-any-arrow} to 
Equation (\ref{eq:A}) implies that
\[
1 = \frac{\left<\overrightarrow{AB},\overrightarrow{AB}+_{A}\overrightarrow{BD}\right>_{A}}{||\overrightarrow{AB}||_{A}\;||\overrightarrow{AD}||_{A}}\
=\frac{||\overrightarrow{AB}||_{A}^{2}+\left<\overrightarrow{AB},\overrightarrow{BD}\right>_{A}}{||\overrightarrow{AB}||_{A}\;||\overrightarrow{AD}||_{A}}.
\]
Multiplying both sides in the above by $||\overrightarrow{AB}||_{A}\;||\overrightarrow{AD}||_{A}$
yields
\begin{equation}
||\overrightarrow{AB}||_{A}^{2}+\left<\overrightarrow{AB},\overrightarrow{BD}\right>_{A}=||\overrightarrow{AB}||_{A}\;||\overrightarrow{AD}||_{A}.\label{eq:FLD}
\end{equation}
Similar calculations show that  
\begin{equation}
||\overrightarrow{AD}||_{A}  
=\sqrt{||\overrightarrow{AB}||_{A}^{2}+2\left<\overrightarrow{AB},\overrightarrow{BD}\right>_{A}+||\overrightarrow{BD}||_{A}^{2}}.\label{eq:FLO}
\end{equation}
If we combine Equations (\ref{eq:FLD}) and (\ref{eq:FLO}),
we get
\begin{equation}
||\overrightarrow{AB}||_{A}^{2}+\left<\overrightarrow{AB},\overrightarrow{BD}\right>_{A}=||\overrightarrow{AB}||_{A}\;\sqrt{||\overrightarrow{AB}||_{A}^{2}+2\left<\overrightarrow{AB},\overrightarrow{BD}\right>_{A}+||\overrightarrow{BD}||_{A}^{2}}.\label{eq:ssass}
\end{equation}
By squaring both sides of the above, we get 
\begin{align*}
||\overrightarrow{AB}||_{A}^{4}&+2||\overrightarrow{AB}||_{A}^{2}\;\left<\overrightarrow{AB},\overrightarrow{BD}\right>_{A}
+\left<\overrightarrow{AB},\overrightarrow{BD}\right>^{2}\\
&=||\overrightarrow{AB}||_{A}^{4}+2||\overrightarrow{AB}||_{A}^{2}\;\left<\overrightarrow{AB},\overrightarrow{BD}\right>_{A}
+||\overrightarrow{AB}||_{A}^{2}\;||\overrightarrow{BD}||_{A}^{2},
\end{align*}
which is equivalent to
\[
\left<\overrightarrow{AB},\overrightarrow{BD}\right>^{2}=||\overrightarrow{AB}||_{A}^{2}\;||\overrightarrow{BD}||_{A}^{2}.
\]
Taking the square root of both sides of the above gives 
\begin{equation}
\left<\overrightarrow{AB},\overrightarrow{BD}\right>=\pm||\overrightarrow{AB}||_{A}\;||\overrightarrow{BD}||_{A}.\label{eq:FLJ}
\end{equation}
Since $A\neq_{P}B,\;B\neq_{P}D$, it follows by Lemma \ref{lem:||AB||=00003D0 IFF}
that $||\overrightarrow{AB}||_{A}\neq0$ and that $||\overrightarrow{BD}||_{A}\neq0$.
Hence, if we divide both sides of the Equation (\ref{eq:FLJ}) by
$||\overrightarrow{AB}||_{A}\;||\overrightarrow{BD}||_{A}$, we get
the desired equation. 

\medskip
2) To prove the second part of the theorem we return to Equation (\ref{eq:A}) and work on expanding the numerator $\left<\overrightarrow{AB},
\overrightarrow{AD}\right>_A$ via Axiom 1 and the fact that $\overrightarrow{AD}=_{A}\overrightarrow{AB}+_{A}\overrightarrow{BD}$
and $\overrightarrow{AB}=_{A}\overrightarrow{AD}+_{A}\overrightarrow{DB}$.
Equation (\ref{eq:A}) may then be rewritten as
\begin{align}
||\overrightarrow{AB}||_{A}\;||\overrightarrow{AD}||_{A} &=
\left<\overrightarrow{AB},\overrightarrow{AD}\right>_{A}
=
\left<\overrightarrow{AD}+_{A}\overrightarrow{DB},\overrightarrow{AD}\right>_{A}\notag\\
&=
\left<\overrightarrow{AD},\overrightarrow{AD}\right>_{A}+\left<\overrightarrow{DB},\overrightarrow{AD}\right>_{A}
= \left<\overrightarrow{AD},\overrightarrow{AD}\right>_{A}+\left<\overrightarrow{DB},\overrightarrow{AB} + \overrightarrow{BD}\right>_{A}\notag\\
&= ||\overrightarrow{AD}||_{A}^{2}+\left<\overrightarrow{DB},\overrightarrow{AB}\right>_{A}-||\overrightarrow{BD}||_{A}^{2}.\label{eq:FL}
\end{align}

\medskip
The next step is to manipulate Equation (\ref{eq:FLJ}) through Definition \ref{def:(-)Minuse.arrowAB} and repeated applications of Axiom 1.
Definition \ref{def:(-)Minuse.arrowAB} implies that $\overrightarrow{BD}=_{A}-\overrightarrow{DB}$.
Thus, it follows by Equation (\ref{eq:A01-1}) and Equation  (\ref{eq:AXIOMb})  that 
\[
\left<\overrightarrow{AB},\overrightarrow{BD}\right>
= \left<\overrightarrow{BD},\overrightarrow{AB}\right>
=-\left<\overrightarrow{AB},\overrightarrow{DB}\right>.
\]
This means that we can rewrite Equation (\ref{eq:FLJ}) (noticing
that $||\overrightarrow{DB}||_{A}=||\overrightarrow{BD}||_{A}$ by
Equation (\ref{eq:LEM11})) as follows
\begin{equation}
\left<\overrightarrow{DB},\overrightarrow{AB}\right>=\mp||\overrightarrow{AB}||_{A}\;||\overrightarrow{DB}||_{A}.\label{eq:FGH}
\end{equation}

If we combine Equations (\ref{eq:FL}) and (\ref{eq:FGH}), we
get
\[
||\overrightarrow{AD}||_{A}^{2}\mp||\overrightarrow{AB}||_{A}\;||\overrightarrow{DB}||_{A}-||\overrightarrow{DB}||_{A}^{2}=||\overrightarrow{AB}||_{A}\;||\overrightarrow{AD}||_{A}.
\]
Rearranging the above equation yields 
\[
||\overrightarrow{AD}||_{A}^{2}-||\overrightarrow{DB}||_{A}^{2}=||\overrightarrow{AB}||_{A}\;||\overrightarrow{AD}||_{A}\pm||\overrightarrow{AB}||_{A}\;||\overrightarrow{DB}||_{A},
\]
that is, 
\begin{equation}
(||\overrightarrow{AD}||_{A}-||\overrightarrow{DB}||_{A})(||\overrightarrow{AD}||_{A}+||\overrightarrow{DB}||_{A})=||\overrightarrow{AB}||_{A}\;(||\overrightarrow{AD}||_{A}\pm||\overrightarrow{DB}||_{A}).\label{eq:SIGN}
\end{equation}
Now Equation (\ref{eq:SIGN}) tells us that either 
\begin{align}
||\overrightarrow{AD}||_{A}-||\overrightarrow{BD}||_{A} & =||\overrightarrow{AB}||_{A},\qquad\text{or}\label{eq:minu}\\
||\overrightarrow{AD}||_{A}+||\overrightarrow{DB}||_{A} & =||\overrightarrow{AB}||_{A}\label{eq:posit},
\end{align}
as desired.

\medskip
3) Notice that when proving Part 2 either Equation (\ref{eq:minu}) or Equation (\ref{eq:posit}), is valid.
We will show the validity of Equation (\ref{eq:minu}) being valid
and leave the validity of Equation (\ref{eq:posit}) to the reader. Since
$A\neq_{P}B$, Lemma \ref{lem:||AB||=00003D0 IFF} implies that
$||\overrightarrow{AB}||_{A}\neq0$. Hence, Equation becomes
(\ref{eq:minu}) as $||\overrightarrow{AD}||_{A}=||\overrightarrow{AB}||_{A}\left(1+\frac{||\overrightarrow{BD}||_{A}}{||\overrightarrow{AB}||_{A}}\right)$.
If we put
\[
t_{D}=1+\frac{||\overrightarrow{BD}||_{A}}{||\overrightarrow{AB}||_{A}},
\]
 then we can rewrite the preceding equation as $||\overrightarrow{AD}||_{A}=(t_{D})||\overrightarrow{AB}||_{A}$.
Notice that $t_{D}>0$ as $||\overrightarrow{BD}||_{A},\;||\overrightarrow{AB}||_{A}>0$.
Hence, by Equation (\ref{eq:LEM00}) we have 
\begin{equation}
||\overrightarrow{AD}||_{A}  =(t_{D})||\overrightarrow{AB}||_{A} = ||(t_{D})\overrightarrow{AB}||_{A}.\label{eq:one}
\end{equation}
Also, by Equations (\ref{eq:A01-1}) and (\ref{eq:LEM00}) we have
\begin{equation}\label{eq:two}
\frac{\left<\overrightarrow{AD},(t_{D})\overrightarrow{AB}\right>}{||\overrightarrow{AD}||_{A}\;||(t_{D})\overrightarrow{AB}||_{A}}  
=\frac{t_{D}\;\left<\overrightarrow{AD},\overrightarrow{AB}\right>}{t_{D}\;||\overrightarrow{AD}||_{A}\;||\overrightarrow{AB}||_{A}}
=\frac{\left<\overrightarrow{AD},\overrightarrow{AB}\right>}{||\overrightarrow{AD}||_{A}\;||\overrightarrow{AB}||_{A}} = 1,
\end{equation}
where the last equality follows by the hypothesis of this theorem.
By Definition
\ref{def:(Scalar-Multiplication-of} we have 
$(t_{D})\overrightarrow{AB}=_{A}\overrightarrow{AK}$
 for some point $K$, and by Definition \ref{def:A LINE} the point
$K$ lies on the line $l_{AB}$. 
Equation (\ref{eq:one}) implies that $||\overrightarrow{AD}|| = ||\overrightarrow{AK}||$ while
Equation (\ref{eq:two}) implies that
$\frac{\left<\overrightarrow{AD},\overrightarrow{AK}\right>}{||\overrightarrow{AD}||_{A}\;||\overrightarrow{AK}||_{A}}  = 1$.
Thus, Proposition \ref{lem:ABAD0} is applicable and shows that $D=_{P}K$, a point on $l_{AB}$. 
\end{proof}

\medskip
An analogous result occurs when the postulated inner product in the hypothesis of Theorem \ref{thm:ONETWO} is set equal to $-1$.

\begin{figure}[ht]
\includegraphics[scale=0.4]{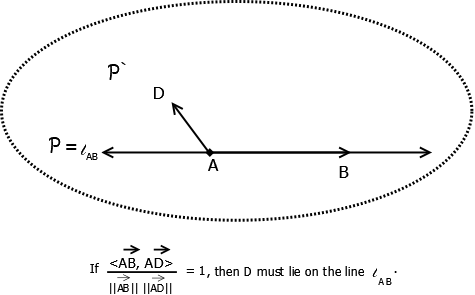}
\caption{An illustration of Theorem \ref{thm:ONETWO1}.}
\label{fig:Pnts-on-a}
\end{figure}

\begin{thm}
\label{thm:ONETWO1}Let $A,\;B$, and $D$ be three distinct points of $\mathcal{P}$
such that
$\frac{\left<\overrightarrow{AB},\overrightarrow{AD}\right>_{A}}{||\overrightarrow{AB}||_{A}\;||\overrightarrow{AD}||_{A}}=-1$.
Then the following is true:
\begin{itemize}
\item[1.]
 \[
 \frac{\left<\overrightarrow{AB},\overrightarrow{BD}\right>}{||\overrightarrow{AB}||_{A}\;||\overrightarrow{BD}||_{A}}=\pm1;\]

\item[2.] 
\[
||\overrightarrow{AD}||_{A} = -||\overrightarrow{AB}||_{A}+||\overrightarrow{BD}||_{A};\]
\begin{figure}[ht]
\centering
\includegraphics[width=0.5\linewidth]{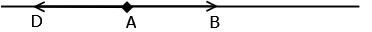}
\caption{A depiction of three points $A,\;B,\;D$ when $||\protect\overrightarrow{AD}||_{A} = -||\protect\overrightarrow{AB}||_{A}+||\protect\overrightarrow{BD}||_{A}$.}
\label{fig:THEOREM37}
\end{figure}

\item[3.]
there exists $t_{D}\in\mathbb{R}$ with $t_D < 0$ such that $\overrightarrow{AD}=_{A}(t_{D})\overrightarrow{AB}$;
thus the point $D$ lies on the line $l_{AB}$; see Figure \ref{fig:Pnts-on-a}.
\end{itemize}
\end{thm}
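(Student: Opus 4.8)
The plan is to mirror the structure of the proof of Theorem \ref{thm:ONETWO} as closely as possible, since the two statements differ only in replacing the hypothesis pre-inner product value $1$ by $-1$ and consequently flipping the sign that appears in the ``same direction'' relation into an ``opposite direction'' relation. For Part 1, I would start from the hypothesis
\[
\frac{\left<\overrightarrow{AB},\overrightarrow{AD}\right>_{A}}{||\overrightarrow{AB}||_{A}\;||\overrightarrow{AD}||_{A}}=-1,
\]
use $\overrightarrow{AD}=_{A}\overrightarrow{AB}+_{A}\overrightarrow{BD}$ together with the arrow addition linearity of Axiom 1 to expand the numerator as $||\overrightarrow{AB}||_{A}^{2}+\left<\overrightarrow{AB},\overrightarrow{BD}\right>_{A}$, and then cross-multiply to obtain $||\overrightarrow{AB}||_{A}^{2}+\left<\overrightarrow{AB},\overrightarrow{BD}\right>_{A}=-||\overrightarrow{AB}||_{A}\;||\overrightarrow{AD}||_{A}$. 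Combining this with the expansion of $||\overrightarrow{AD}||_{A}$ in terms of $\left<\overrightarrow{AB},\overrightarrow{BD}\right>_{A}$ (exactly as in Equation (\ref{eq:FLO})), squaring to clear the radical, and simplifying should again yield $\left<\overrightarrow{AB},\overrightarrow{BD}\right>^{2}=||\overrightarrow{AB}||_{A}^{2}\;||\overrightarrow{BD}||_{A}^{2}$, from which Part 1 follows after taking the square root and dividing by the (nonzero, by Lemma \ref{lem:||AB||=00003D0 IFF}) product of measures.

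For Part 2, I would reuse the algebraic identity (\ref{eq:FL}) coming from the double expansion $\overrightarrow{AD}=_{A}\overrightarrow{AB}+_{A}\overrightarrow{BD}$ and $\overrightarrow{AB}=_{A}\overrightarrow{AD}+_{A}\overrightarrow{DB}$, the only change being that the left-hand side now carries a minus sign because of the $-1$ hypothesis. Feeding in the sign information from Part 1 via Definition \ref{def:(-)Minuse.arrowAB} (so that $\left<\overrightarrow{DB},\overrightarrow{AB}\right>=\mp||\overrightarrow{AB}||_{A}\;||\overrightarrow{DB}||_{A}$ as in Equation (\ref{eq:FGH})) produces a factored identity analogous to Equation (\ref{eq:SIGN}). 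The crucial structural point is that, in this $-1$ case, the geometric configuration forces $B$ and $D$ to lie on opposite sides of $A$, so only the single relation $||\overrightarrow{AD}||_{A}=-||\overrightarrow{AB}||_{A}+||\overrightarrow{BD}||_{A}$ (equivalently $||\overrightarrow{BD}||_{A}=||\overrightarrow{AB}||_{A}+||\overrightarrow{AD}||_{A}$, i.e.\ $A$ is between $B$ and $D$) survives, rather than the two-way dichotomy of Theorem \ref{thm:ONETWO}. I would discard the spurious algebraic root by checking compatibility of signs: the root that would give $||\overrightarrow{AB}||_{A}=||\overrightarrow{AD}||_{A}+||\overrightarrow{DB}||_{A}$ is inconsistent with $\left<\overrightarrow{AB},\overrightarrow{AD}\right>_{A}<0$ and must be rejected.

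For Part 3, having established $||\overrightarrow{AD}||_{A}=-||\overrightarrow{AB}||_{A}+||\overrightarrow{BD}||_{A}$, I would define
\[
t_{D}=-1+\frac{||\overrightarrow{BD}||_{A}}{||\overrightarrow{AB}||_{A}}
\]
so that $||\overrightarrow{AD}||_{A}=|t_{D}|\;||\overrightarrow{AB}||_{A}$, and argue that $t_{D}<0$. Here the sign claim is the one place requiring genuine care rather than a verbatim transcription: I must confirm from the ordering/opposite-direction data that $||\overrightarrow{BD}||_{A}<||\overrightarrow{AB}||_{A}$ fails in general, so instead I would obtain the sign directly from the opposite-direction hypothesis, writing $||\overrightarrow{AD}||_{A}=(|t_D|)\,||\overrightarrow{AB}||_{A}=||(t_D)\overrightarrow{AB}||_{A}$ via Equation (\ref{eq:LEM00}) and verifying that
\[
\frac{\left<\overrightarrow{AD},(t_{D})\overrightarrow{AB}\right>}{||\overrightarrow{AD}||_{A}\;||(t_{D})\overrightarrow{AB}||_{A}}=1
\]
using Equations (\ref{eq:A01-1}) and (\ref{eq:LEM00}), where the factor $\operatorname{sign}(t_D)=-1$ cancels against the $-1$ from the theorem's hypothesis to leave $+1$. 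Then setting $(t_{D})\overrightarrow{AB}=_{A}\overrightarrow{AK}$ for some $K$ on $l_{AB}$ by Definition \ref{def:A LINE}, Proposition \ref{lem:ABAD0} applies to the pair $\overrightarrow{AD},\overrightarrow{AK}$ and yields $D=_{P}K$, placing $D$ on $l_{AB}$. I expect the main obstacle to be precisely this bookkeeping of signs in Parts 2 and 3: the $-1$ hypothesis collapses the two-case conclusion of Theorem \ref{thm:ONETWO} into a single case, and one must justify rejecting the extraneous root and pinning down $t_{D}<0$ rather than simply copying the positive-direction argument.
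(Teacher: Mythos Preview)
Your Parts 1 and 2 match the paper's approach. The paper likewise declares Part 1 identical to the corresponding part of Theorem \ref{thm:ONETWO}, and for Part 2 it obtains the analogue of Equation (\ref{eq:SIGN}) with the extra minus sign coming from the $-1$ hypothesis and eliminates the spurious branch by observing it would force $-||\overrightarrow{AB}||_{A}$ to equal a sum of nonnegative measures.

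In Part 3 there is a concrete sign error. Your formula
\[
t_{D}=-1+\frac{||\overrightarrow{BD}||_{A}}{||\overrightarrow{AB}||_{A}}
\]
is \emph{positive}, not negative: Part 2 gives $||\overrightarrow{BD}||_{A}=||\overrightarrow{AB}||_{A}+||\overrightarrow{AD}||_{A}>||\overrightarrow{AB}||_{A}$, so the ratio exceeds $1$. What you have written down is $|t_{D}|=||\overrightarrow{AD}||_{A}/||\overrightarrow{AB}||_{A}$, not $t_{D}$ itself. With a positive scalar, Definition \ref{def:(Scalar-Multiplication-of} forces $(t_{D})\overrightarrow{AB}$ to point in the \emph{same} direction as $\overrightarrow{AB}$, so the inner-product check you plan would yield $-1$ rather than $+1$, and Proposition \ref{lem:ABAD0} would not apply. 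Your attempted repair (``obtain the sign directly from the opposite-direction hypothesis\ldots where the factor $\operatorname{sign}(t_{D})=-1$ cancels\ldots'') is circular: you are asserting $\operatorname{sign}(t_{D})=-1$ for a quantity you have just defined to be positive.

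The fix, which is exactly what the paper does, is to negate your expression: set
\[
t_{D}=1-\frac{||\overrightarrow{BD}||_{A}}{||\overrightarrow{AB}||_{A}}<0.
\]
Then $||\overrightarrow{AD}||_{A}=|t_{D}|\,||\overrightarrow{AB}||_{A}$ still holds, and now $t_{D}/|t_{D}|=-1$ genuinely cancels the $-1$ from the hypothesis in the computation of $\dfrac{\left<\overrightarrow{AD},(t_{D})\overrightarrow{AB}\right>_{A}}{||\overrightarrow{AD}||_{A}\,||(t_{D})\overrightarrow{AB}||_{A}}$, after which your invocation of Proposition \ref{lem:ABAD0} to conclude $D=_{P}K$ goes through as written.
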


\begin{proof}
The proof of Part 1 is the same as the proof of Part 1 of Theorem \ref{thm:ONETWO} and is left to the reader.
The proof of Part 2 is slightly more subtle. Since 
$\left<\overrightarrow{AB},\overrightarrow{AD}\right>_{A} = -||\overrightarrow{AB}||_{A}||\overrightarrow{AD}||_{A}$, 
Equation (\ref{eq:SIGN}) becomes
\begin{equation}
(||\overrightarrow{AD}||_{A}-||\overrightarrow{DB}||_{A})(||\overrightarrow{AD}||_{A}+||\overrightarrow{DB}||_{A})=
-||\overrightarrow{AB}||_{A}\;(||\overrightarrow{AD}||_{A}\mp||\overrightarrow{DB}||_{A}).\label{eq:SIGN1}
\end{equation}
A priori, it seems like Equation (\ref{eq:SIGN1}) has two possibilities.
However, if we choose the minus sign option, Equation (\ref{eq:SIGN1}) becomes 
\[
(||\overrightarrow{AD}||_{A}-||\overrightarrow{DB}||_{A})
(||\overrightarrow{AD}||_{A}+||\overrightarrow{DB}||_{A})=
-||\overrightarrow{AB}||_{A}\;(||\overrightarrow{AD}||_{A}-||\overrightarrow{DB}||_{A})
\]
which leads to the contradiction of $-||\overrightarrow{AB}|| = ||\overrightarrow{AD}||_{A}+||\overrightarrow{DB}||_{A}$.
Thus Equation (\ref{eq:SIGN1}) really is
\[
(||\overrightarrow{AD}||_{A}-||\overrightarrow{DB}||_{A})(||\overrightarrow{AD}||_{A}+||\overrightarrow{DB}||_{A})=
-||\overrightarrow{AB}||_{A}\;(||\overrightarrow{AD}||_{A}+||\overrightarrow{DB}||_{A}),\]
which is equivalent to
\begin{equation}\label{adeq1}
||\overrightarrow{AD}||_{A}-||\overrightarrow{BD}||_{A} = -||\overrightarrow{AB}||_{A}.
\end{equation}
For the proof of Part 3 we need to first rewrite Equation (\ref{adeq1}) as
\begin{equation}\label{adeqrewrite}
||\overrightarrow{AD}||_{A}+||\overrightarrow{AB}||_{A} = ||\overrightarrow{BD}||_{A}.
\end{equation}
Equation (\ref{adeqrewrite}) shows that $||\overrightarrow{AB}||_{A}\leq ||\overrightarrow{BD}||_{A}$, which in turn implies 
\begin{equation}\label{ratio1}
1\leq \frac{||\overrightarrow{BD}||_{A}}{||\overrightarrow{AB}||_{A}}.
\end{equation}
Also, Equation (\ref{adeq1}) implies that
$-||\overrightarrow{AD}||_{A}=||\overrightarrow{AB}||_{A}\left(1-\frac{||\overrightarrow{BD}||_{A}}{||\overrightarrow{AB}||_{A}}\right)$.
If we put
\[
t_{D}=1-\frac{||\overrightarrow{BD}||_{A}}{||\overrightarrow{AB}||_{A}},
\]
 then we can rewrite the preceding equation as $-||\overrightarrow{AD}||_{A}=(t_{D})||\overrightarrow{AB}||_{A}$.
Equation (\ref{ratio1}) implies  $t_{D}\leq 0$. However if $t_{D} = 0$, we get a contradiction since by hypothesis $A$ is distinct from $D$.
So in fact $t_{D} < 0$. 
Then
\[
\frac{\left<\overrightarrow{AD},(t_{D})\overrightarrow{AB}\right>}{||\overrightarrow{AD}||_{A}\;||(t_{D})\overrightarrow{AB}||_{A}} 
 =\frac{t_{D}\;\left<\overrightarrow{AD},\overrightarrow{AB}\right>}{|t_{D}|\;||\overrightarrow{AD}||_{A}\;||\overrightarrow{AB}||_{A}}
=-\frac{\left<\overrightarrow{AD},\overrightarrow{AB}\right>}{||\overrightarrow{AD}||_{A}\;||\overrightarrow{AB}||_{A}} = 1,
\]
where the last equality made use of the hypothesis of the theorem.
Furthermore
\begin{equation}
||\overrightarrow{AD}||_{A}  =-(t_{D})||\overrightarrow{AB}||_{A} = |t_D|||\overrightarrow{AB}||_{A} = 
||(t_{D})\overrightarrow{AB}||_{A}\label{eq:onea},
\end{equation}
and the rest of the argument proceeds analogously to proof of Part 3 of Theorem \ref{thm:ONETWO}.
\end{proof}

\medskip
Finally we are in the position to prove the arrow space equivalent of Euler's first axiom, namely that for any two distinct points
there exists a unique line that contains the two points.
\begin{thm}(Existence of a Unique Line)
\label{thm:WAS.Axiom (3)}Given any two distinct points $A$ and $B$ of $\mathcal{P}$,
there exists a unique line $l_{AB}$ containing $A$ and $B$.
\end{thm}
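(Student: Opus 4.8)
The plan is to establish existence and uniqueness separately, using the scalar-multiplication characterization of a line together with Theorems~\ref{thm:ONETWO} and~\ref{thm:ONETWO1}.

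For existence, I would take the arrow $\overrightarrow{AB}$ (legitimate since $A\neq_P B$) and let $l_{AB}$ be the line it determines via Definition~\ref{def:A LINE}. To see that both given points lie on $l_{AB}$, note that $A$ is the head of $(0)\;\overrightarrow{AB}=_A\overrightarrow{AA}$ (Proposition~\ref{prop:zeroAB}) and that $B$ is the head of $(1)\;\overrightarrow{AB}=_A\overrightarrow{AB}$ (Theorem~\ref{thm:1 AB=00003DAB}); hence $A,B\in\mathcal{P}_{l_{AB}}$. Axiom~3 then guarantees that $\mathcal{P}_{l_{AB}}$ is a nonempty subset of $\mathcal{P}$ in bijection with $\mathbb{R}$, so $l_{AB}$ is a genuine line through $A$ and $B$.

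For uniqueness the key is a pre-inner product characterization of membership: for a point $D$ distinct from $A$, one has $D\in\mathcal{P}_{l_{AB}}$ if and only if $\left<\overrightarrow{AB},\overrightarrow{AD}\right>_A=\pm\,||\overrightarrow{AB}||_A\,||\overrightarrow{AD}||_A$. The forward direction is immediate from Definition~\ref{def:(Scalar-Multiplication-of}(2), while the reverse direction is exactly the content of Theorems~\ref{thm:ONETWO} and~\ref{thm:ONETWO1}. Now let $l'$ be any line through $A$ and $B$. By Definition~\ref{def:A LINE}, $l'$ arises from some base point $P$ and direction $\overrightarrow{PQ}$ with $P\neq_P Q$, so there are reals $a\neq b$ with $\overrightarrow{PA}=_A(a)\;\overrightarrow{PQ}$ and $\overrightarrow{PB}=_A(b)\;\overrightarrow{PQ}$, and each $D\in\mathcal{P}_{l'}$ satisfies $\overrightarrow{PD}=_A(d)\;\overrightarrow{PQ}$ for some real $d$.

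The heart of the argument is a direct computation. Writing $\overrightarrow{AD}=_A\overrightarrow{AP}+_A\overrightarrow{PD}$ and $\overrightarrow{AB}=_A\overrightarrow{AP}+_A\overrightarrow{PB}$, then expanding via the bilinearity of Axiom~1 (Equation~(\ref{eq:AXIOMc})), the negation rule, and scalar linearity (Equation~(\ref{eq:A01})), every term collapses to a multiple of $||\overrightarrow{PQ}||_A^2$. I would obtain $||\overrightarrow{AD}||_A=|d-a|\,||\overrightarrow{PQ}||_A$, $||\overrightarrow{AB}||_A=|b-a|\,||\overrightarrow{PQ}||_A$, and $\left<\overrightarrow{AB},\overrightarrow{AD}\right>_A=(b-a)(d-a)\,||\overrightarrow{PQ}||_A^2$, whence
\[
\frac{\left<\overrightarrow{AB},\overrightarrow{AD}\right>_A}{||\overrightarrow{AB}||_A\,||\overrightarrow{AD}||_A}=\frac{(b-a)(d-a)}{|b-a|\,|d-a|}=\pm1 .
\]
By the characterization above (and the trivial cases $D=A$, $D=B$) this yields $\mathcal{P}_{l'}\subseteq\mathcal{P}_{l_{AB}}$. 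For the reverse inclusion I would take $D\in\mathcal{P}_{l_{AB}}$, so $\overrightarrow{AD}=_A(t)\;\overrightarrow{AB}$ for some $t$; using Axiom~3 I pick the point $D'$ of $l'$ whose parameter is $s=a+t(b-a)$, and the same computation (with $d=s$) gives $\overrightarrow{AD'}=_A(t)\;\overrightarrow{AB}$. Since $\overrightarrow{AD}$ and $\overrightarrow{AD'}$ both equal $(t)\;\overrightarrow{AB}$, Definition~\ref{def:equal arrows} (or the uniqueness in Proposition~\ref{lem:ABAD0}) forces $D=_P D'$, so $D\in\mathcal{P}_{l'}$ and $\mathcal{P}_{l'}=\mathcal{P}_{l_{AB}}$.

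I expect the main obstacle to be that scalar multiplication does not distribute over arrow addition in $\mathcal{P}_A$ (as demonstrated in Section~\ref{sec:Comparing-Arrows-With}), so one cannot manipulate the relation $\overrightarrow{AD}=_A(t)\;\overrightarrow{AB}$ by naive algebra. The workaround is never to distribute: instead I verify every scalar-multiple relation purely through its defining data, namely the measure and the sign of the pre-inner product (Definition~\ref{def:(Scalar-Multiplication-of}), which are computed cleanly once each arrow based at $A$ is re-expressed through the common direction $\overrightarrow{PQ}$ using only the bilinearity axioms.
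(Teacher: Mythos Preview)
Your argument is correct and follows essentially the same strategy as the paper's proof: existence via Definition~\ref{def:A LINE} and Axiom~3, then uniqueness by expanding $\langle\overrightarrow{AB},\overrightarrow{AD}\rangle_A$ through a common base point, reducing to the $\pm1$ condition, and invoking Theorems~\ref{thm:ONETWO} and~\ref{thm:ONETWO1}. Your reverse inclusion via an explicitly constructed $D'$ is a minor variation; the paper instead notes that a symmetric argument gives $l_{AB}\subseteq l'$ (indeed, once $l'\subseteq l_{AB}$ is known, the defining points $P,Q$ of $l'$ lie on $l_{AB}$, so your forward computation applies with the roles of $(A,B)$ and $(P,Q)$ swapped).
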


\begin{proof}
Let $A$ and $B$ be two distinct points of $\mathcal{P}$. It follows by Axiom
3 that the set $\{F\in\mathcal{P}|\;\overrightarrow{AF}=_{A}(t)\;\overrightarrow{AB},\;\text{where}\;t\in\mathbb{R}\}$
is non-empty. By Definition \ref{def:A LINE} this set forms a line
containing $A$ and $B$. To show the uniqueness part of this
theorem, let $l_{AB}=\{F\in\mathcal{P}|\;\overrightarrow{AF}=_{A}(t)\;\overrightarrow{AB},\;\text{where}\;t\in\mathbb{R}\}$
and $l_{ML}=\{K\in\mathcal{P}|\;\overrightarrow{MK}=_{A}(s)\;\overrightarrow{ML},\;\text{where}\;s\in\mathbb{R}\}$
be two lines each of which contains $A$ and $B$, where $M$ and
$L$ are any two distinct points that lie on $l_{AB}$ other than
$A$ and $B$.  By Definition \ref{def:A LINE} there exists
$t_{M},\;t_{L}\in\mathbb{R}$ such that 
\begin{equation}
\overrightarrow{AM}=_{A}(t_{M})\;\overrightarrow{AB}\,\,\text{and}\,\,\overrightarrow{AL}=_{A}(t_{L})\;\overrightarrow{AB}.\label{eq:AMAL}
\end{equation}
To show that $l_{AB}=l_{ML}$, let $D\in l_{ML}$. It follows
by Definition \ref{def:A LINE} that there exists $s_{D}\in\mathbb{R}$
with
\begin{equation}
\overrightarrow{MD}=_{A}(s_{D})\;\overrightarrow{ML}.\label{eq:MDML}
\end{equation}
See Figure \ref{Fig5*}.
\begin{figure}[ht]\label{Fig5*}
\centering
\includegraphics[width=0.7\linewidth]{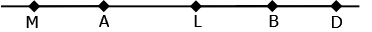}
\caption{The relationship between points $M$, $A$, $L$, $B$, and $D$ on line $l_{AB}$}
\label{fig:THEOREM38}
\end{figure}

In order to show that $D\in l_{AB}$, we need
to find some $t_{D}\in\mathbb{R}$ such that $\overrightarrow{AD}=_{A}(t_{D})\;\overrightarrow{AB}.$
Since $A,\;B,\;M$, and $L$ satisfy the hypotheses
of Lemma \ref{lem:LAAALM} we have $\frac{\left<\overrightarrow{ML},\overrightarrow{AB}\right>_{A}}{||\overrightarrow{ML}||_{A}\;||\overrightarrow{AB}||_{A}}=\pm1$
which is equivalent to 
\begin{equation}
\left<\overrightarrow{ML},\overrightarrow{AB}\right>_{A}=\pm||\overrightarrow{ML}||_{A}\;||\overrightarrow{AB}||_{A}.\label{eq:HERE}
\end{equation}
Since $\overrightarrow{AD}=_{A}\overrightarrow{AM}+_{A}\overrightarrow{MD}$, it follows by Equation (\ref{eq:AXIOMc}) and Definition \ref{def:MeasureOf-any-arrow} that
\begin{equation}
\frac{\left<\overrightarrow{AD},\overrightarrow{AB}\right>_{A}}{||\overrightarrow{AD}||_{A}\;||\overrightarrow{AB}||_{A}}=\frac{\left<\overrightarrow{AM},\overrightarrow{AB}\right>_{A}+\left<\overrightarrow{MD},\overrightarrow{AB}\right>_{A}}{\sqrt{\left<\overrightarrow{AD},\overrightarrow{AD}\right>_{A}}\;||\overrightarrow{AB}||_{A}}.\label{eq:ADMD}
\end{equation}
The numerator of the right hand side in the Equation (\ref{eq:ADMD})
can be rewritten, using Equations (\ref{eq:AMAL}), (\ref{eq:MDML}), (\ref{eq:HERE}), Definition (\ref{def:MeasureOf-any-arrow}),  
and Equation (\ref{eq:A01}) as follows
\begin{align}
\left<\overrightarrow{AM},\overrightarrow{AB}\right>_{A}+\left<\overrightarrow{MD},\overrightarrow{AB}\right>_{A} & =
t_{M}\;||\overrightarrow{AB}||_{A}^{2}+s_{D}\;\left<\overrightarrow{ML},\overrightarrow{AB}\right>_{A}\notag\\
&=
t_{M}\;||\overrightarrow{AB}||_{A}^{2}\pm s_{D}\;||\overrightarrow{ML}||_{A}\;||\overrightarrow{AB}||_{A}.\label{eq:nume}
\end{align} 
Next we simplify the denominator of the right hand side
of the Equation (\ref{eq:ADMD}) through a series of similar calculations. Equations (\ref{eq:AXIOMc}) and (\ref{eq:AXIOMb}) imply that  
\[
\left<\overrightarrow{AD},\overrightarrow{AD}\right>
=\left<\overrightarrow{AM},\overrightarrow{AM}\right>_{A}+2\left<\overrightarrow{MD},\overrightarrow{AM}\right>_{A}+\left<\overrightarrow{MD},\overrightarrow{MD}\right>_{A}.
\]
Using Equations (\ref{eq:AMAL}), (\ref{eq:MDML}), (\ref{eq:HERE}), Definition \ref{def:MeasureOf-any-arrow}, and Equation (\ref{eq:A01}), the preceding equation can be rearranged as follows
\begin{align}
\left<\overrightarrow{AD},\overrightarrow{AD}\right>&=t_{M}^{2}\;||\overrightarrow{AB}||_{A}^{2}+2t_{M}s_{D}\;\left<\overrightarrow{ML},\overrightarrow{AB}\right>_{A}+s_{D}^{2}\;||\overrightarrow{ML}||_{A}^{2}\notag\\
&=t_{M}^{2}\;||\overrightarrow{AB}||_{A}^{2}\pm2t_{M}s_{D}\;||\overrightarrow{ML}||_{A}\;||\overrightarrow{AB}||_{A}+s_{D}^{2}\;||\overrightarrow{ML}||_{A}^{2}\notag\\
&=(t_{M}\;||\overrightarrow{AB}||_{A}\pm s_{D}||\overrightarrow{ML}||_{A})^{2}.\label{eq:flsd}
\end{align}
This means that the denominator of the right hand side of the Equation
(\ref{eq:ADMD}) is
\begin{equation}
\sqrt{\left<\overrightarrow{AD},\overrightarrow{AD}\right>_{A}}\;||\overrightarrow{AB}||_{A}=|t_{M}\;||\overrightarrow{AB}||_{A}^{2}\pm s_{D}||\overrightarrow{ML}||_{A}\;||\overrightarrow{AB}||_{A}|.\label{eq:DENOMI}
\end{equation}
By Combining Equations (\ref{eq:ADMD}), (\ref{eq:nume}), and (\ref{eq:DENOMI})
we find that
\[
\frac{\left<\overrightarrow{AD},\overrightarrow{AB}\right>_{A}}{||\overrightarrow{AD}||_{A}\;||\overrightarrow{AB}||_{A}}=\frac{t_{M}\;||\overrightarrow{AB}||_{A}^{2}\pm s_{D}\;||\overrightarrow{ML}||_{A}\;||\overrightarrow{AB}||_{A}}{|t_{M}\;||\overrightarrow{AB}||_{A}^{2}\pm s_{D}||\overrightarrow{ML}||_{A}\;||\overrightarrow{AB}||_{A}|}=\pm1.
\]
Now by means of Theorem \ref{thm:ONETWO} and \ref{thm:ONETWO1}, we conclude that $D$ is a point on $l_{AB}$.
This means that $l_{AB}\subseteq l_{ML}$, and a similar argument
shows that $l_{ML}\subseteq l_{AB}$ to conclude
that $l_{AB}=l_{ML}$.
\end{proof}

\subsection{Equivalence Relation on $\mathcal{P}_{l_A}$}
Now that we have the definition of the unique $l$ determined by $\overrightarrow{AB}$, we can restrict the arrow
pre-inner product of $\mathcal{P}_A$ to $\mathcal{P}_{l_A}$ and use this restricted arrow pre-inner product to define an equivalence relation 
on the arrows of $\mathcal{P}_{l_A}$ which geometrically captures when two arrows have equal length and same direction.  
Because the arrow pre-inner product on $\mathcal{P}_{l_A}$ is the restriction of the pre-inner product of $\mathcal{P}_A$ to those arrows of $\mathcal{P}_{l_A}$, 
we will also use $\left< -, -\right> _A$ to denoted the arrow pre-inner product of $\mathcal{P}_{l_A}$. 

\medskip
The aforementioned construction is an internal construction for an arrow pre-inner product on $\mathcal{P}_{l_A}$,
 reminiscent of describing the restricted the inner product structure of direct summand in a vector space. 
 We could also describe an external construction for developing an arrow pre-inner product on $\mathcal{P}_{l_A}$ . 
Instead of using restricted arrow pre-inner product from $\mathcal{P}_A$, we focus on $\mathcal{P}_{l_A}$ 
as a separate arrow space and apply the constructions of Section 2 with a {\it new} arrow pre-inner product, which denote as $\left < -, -\right>_{A_l}$.
This new arrow pre-inner product must obey Axioms 1 and 2 and thus gives rises to a new definition 
on $P_{l_A}$ of scalar multiplication, which is denoted as $(a)_{A_l}\overrightarrow{AB}$.

\medskip 
Either construction provides the arrow space $\mathcal{P}_{l_A}$ with an arrow pre-inner product structure
 which is compatible with the propositions and theorems of this and the next subsection.  
 It is a matter of taste as to which construction is preferred by the reader.  For simplicity, we will write our results using the notation of the internal/restricted construction,
  but the reader should note that substitution of $\left< -, -\right> _A$ with 
$\left < -, -\right>_{A_l}$ and $(a)\overrightarrow{AB}$ with $(a)_{A_l}\overrightarrow{AB}$ will not change the validity of the results.

\medskip
The algebraic definition of an equivalence relation on an arrow space $\mathcal{P}_{l_A}$ is 
provided below and will be essential to proving the existence of a unique parallel arrow;
see Theorem \ref{thm:WAS.AXIOM (6)}.

\medskip
\begin{defn}
\label{def:RelatioOfArrowOnline-1}
Given two distinct points $O$ and $P$, let $\mathcal{P}_{l_A}$ denote the arrow space determined by $l_{OP}$.
Let $\overrightarrow{AB}$ and $\overrightarrow{CD}$
be two arrows in $\mathcal{P}_{l_A}$. We say that $\overrightarrow{AB}\;\Re_l\;\overrightarrow{CD}$,
if and only if either $A=_{P}B$ and $C=_{P}D$, or
\[||\overrightarrow{AB}||_{A}=||\overrightarrow{CD}||_{A}\,\,\text{and}\,\,
 \left<\frac{\overrightarrow{AB}}{||\overrightarrow{AB}||_{A}},\;\frac{\overrightarrow{CD}}{||\overrightarrow{CD}||_{A}}\right>_{A}=1.\]
\end{defn}

The following proposition and corollary are immediate consequences of Definition \ref{def:RelatioOfArrowOnline-1}. They will play a role in the proof of Theorem \ref{thm:WAS.AXIOM (5)}.
\begin{prop}
\label{prop:A=00003DB,C=00003DD} If $\overrightarrow{AB}\;\Re_l\;\overrightarrow{CD}$
with $A=_{P}B$, then  $C=_{P}D$.
\end{prop}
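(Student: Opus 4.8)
The plan is to unpack Definition \ref{def:RelatioOfArrowOnline-1} directly, exploiting the fact that $\Re_l$ is defined by a disjunction of two cases that cannot both be operative. Granting $\overrightarrow{AB}\;\Re_l\;\overrightarrow{CD}$ together with the hypothesis $A=_{P}B$, I would show that the second clause of the definition is unavailable, so the relation must be witnessed by the first clause, which immediately yields $C=_{P}D$.

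The first step is to observe that $A=_{P}B$ forces $||\overrightarrow{AB}||_{A}=0$ by Lemma \ref{lem:||AB||=00003D0 IFF}. This is the crux of the argument: the second clause of Definition \ref{def:RelatioOfArrowOnline-1} requires forming the normalized arrow $\frac{\overrightarrow{AB}}{||\overrightarrow{AB}||_{A}}$, and dividing by the zero measure $||\overrightarrow{AB}||_{A}$ is undefined. Hence the second clause cannot be the case realizing $\overrightarrow{AB}\;\Re_l\;\overrightarrow{CD}$.

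By elimination, the relation must hold through the first clause, namely that both $A=_{P}B$ and $C=_{P}D$; in particular $C=_{P}D$, as desired. Alternatively, reading the second clause through its measure-equality requirement $||\overrightarrow{AB}||_{A}=||\overrightarrow{CD}||_{A}$, the equality $||\overrightarrow{AB}||_{A}=0$ would force $||\overrightarrow{CD}||_{A}=0$, and Lemma \ref{lem:||AB||=00003D0 IFF} would again give $C=_{P}D$. Either reading produces the same conclusion.

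I anticipate no genuine obstacle, since the claim is a definitional consequence; this is precisely why it is flagged as an immediate consequence of the definition. The only point demanding care is the well-definedness of the second clause in the presence of a zero arrow, and making explicit that $||\overrightarrow{AB}||_{A}=0$ collapses the disjunction to its single usable case is what drives the argument.
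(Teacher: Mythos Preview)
Your proposal is correct and matches the paper's approach: the paper states this proposition as an immediate consequence of Definition~\ref{def:RelatioOfArrowOnline-1} without supplying further detail, and your argument is precisely the unpacking of that immediacy---noting that $A=_{P}B$ forces $||\overrightarrow{AB}||_{A}=0$ so that the second clause is inapplicable and the first clause must witness the relation.
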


\begin{cor}
\label{cor:AnotequalB}If $\overrightarrow{AB}\;\Re_l\;\overrightarrow{CD}$
with $A\neq_{P}B$, then $C\neq_{P}D$.
\end{cor}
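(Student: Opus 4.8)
The plan is to argue directly from the disjunctive structure of Definition \ref{def:RelatioOfArrowOnline-1}, ruling out its first alternative by means of the hypothesis $A\neq_{P}B$, and then extracting $C\neq_{P}D$ from the equal-measure clause of the surviving alternative via Lemma \ref{lem:||AB||=00003D0 IFF}.

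First I would observe that the statement $\overrightarrow{AB}\;\Re_l\;\overrightarrow{CD}$ holds by exactly one of the two clauses of Definition \ref{def:RelatioOfArrowOnline-1}. The first clause requires $A=_{P}B$ (together with $C=_{P}D$). Since we are given $A\neq_{P}B$, this first clause cannot be the reason the relation holds. Hence the second clause must be in force, and in particular $||\overrightarrow{AB}||_{A}=||\overrightarrow{CD}||_{A}$.

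Second I would invoke Lemma \ref{lem:||AB||=00003D0 IFF}. Because $A\neq_{P}B$, that lemma gives $||\overrightarrow{AB}||_{A}\neq 0$, so the equality of measures forces $||\overrightarrow{CD}||_{A}\neq 0$. Applying Lemma \ref{lem:||AB||=00003D0 IFF} once more in its contrapositive form, $||\overrightarrow{CD}||_{A}\neq 0$ yields $C\neq_{P}D$, which is the desired conclusion.

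I expect no genuine obstacle here; the result is essentially a reading of the definition. The one point worth flagging is that one should \emph{not} attempt to derive the corollary from Proposition \ref{prop:A=00003DB,C=00003DD} by a symmetry argument, since $\Re_l$ is only shown to be an equivalence relation later, in Theorem \ref{thm:WAS.AXIOM (5)}. The clean and self-contained route is the direct appeal to the two clauses of Definition \ref{def:RelatioOfArrowOnline-1} combined with the measure-zero characterization of Lemma \ref{lem:||AB||=00003D0 IFF}.
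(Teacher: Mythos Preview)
Your proof is correct and follows essentially the same approach as the paper, which simply declares the corollary (together with the preceding proposition) to be an ``immediate consequence of Definition \ref{def:RelatioOfArrowOnline-1}'' without spelling out the details; you have supplied exactly those details. One minor correction: the result establishing that $\Re_l$ is an equivalence relation is Theorem \ref{thm:Equvs.Relan.of.ArrowONline-1}, not Theorem \ref{thm:WAS.AXIOM (5)}, though your cautionary remark about avoiding a symmetry argument remains valid since symmetry is indeed proved only later.
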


Our next goal is to show that $\Re_l$
is an equivalence
relation. To do so we need to introduce the following two theorems.
\begin{thm}
\label{thm:WAS.AXIOM (5)}Let $l_{OP}$ be a line and $\mathcal{P}_l$
be the set of points on this line. Let $\overrightarrow{AB},\;\overrightarrow{CD},\;\overrightarrow{EF}$, and $\overrightarrow{GH}$
be in $\mathcal{P}_{l_A}$ such that $\overrightarrow{AB}\;\Re_l\;\overrightarrow{CD}$
and $\overrightarrow{EF}\;\Re_l\;\overrightarrow{GH}$. Then
\begin{equation}
\left<\overrightarrow{AB},\overrightarrow{EF}\right>_{A}=\left<\overrightarrow{CD},\overrightarrow{GH}\right>_{A}.\label{eq:1DA5}
\end{equation}
\end{thm}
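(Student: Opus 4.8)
The plan is to introduce a real coordinate on the line $l_{OP}$ and reduce every pre-inner product appearing in the statement to ordinary real arithmetic. Since all eight points $A,B,C,D,E,F,G,H$ lie on $l=l_{OP}$, Definition \ref{def:A LINE} guarantees that each point $X$ on $l$ is the head of an arrow $\overrightarrow{OX}=_A(t_X)\overrightarrow{OP}$ for some real number $t_X$, and Theorem \ref{thm:aABbAB} (applicable because $O\neq_P P$) shows this $t_X$ is unique. So I would first attach to each point its coordinate $t_X$, recording in passing that $t_O=0$ and $t_P=1$.

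The crux is a single master formula: for any two arrows $\overrightarrow{XY},\overrightarrow{ZW}$ whose four endpoints lie on $l$,
\[
\left<\overrightarrow{XY},\overrightarrow{ZW}\right>_{A}=(t_Y-t_X)(t_W-t_Z)\,||\overrightarrow{OP}||_A^2 .
\]
To prove it I would write $\overrightarrow{XY}=_A\overrightarrow{XO}+_A\overrightarrow{OY}$ and $\overrightarrow{ZW}=_A\overrightarrow{ZO}+_A\overrightarrow{OW}$, expand the pre-inner product by the bilinearity identity (\ref{eq:AXIOMc}), and then substitute $\overrightarrow{XO}=_A-((t_X)\overrightarrow{OP})$, $\overrightarrow{OY}=_A(t_Y)\overrightarrow{OP}$, and likewise for $\overrightarrow{ZO},\overrightarrow{OW}$, collapsing each of the four terms with the scalar-multiplication rule (\ref{eq:A01}) and the negation rule (\ref{eq:A01-1}). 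The terms combine as $(t_Xt_Z-t_Xt_W-t_Yt_Z+t_Yt_W)\,||\overrightarrow{OP}||_A^2$, which factors as claimed. Specializing to $\overrightarrow{ZW}=\overrightarrow{XY}$ and taking the positive square root gives $||\overrightarrow{XY}||_A=|t_Y-t_X|\,||\overrightarrow{OP}||_A$.

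With the master formula in hand the theorem becomes a statement about real numbers. I would show that $\overrightarrow{AB}\,\Re_l\,\overrightarrow{CD}$ forces $t_B-t_A=t_D-t_C$: in the degenerate branch $A=_PB$ the relation yields $C=_PD$ by Proposition \ref{prop:A=00003DB,C=00003DD}, so both differences vanish; in the non-degenerate branch the length condition gives $|t_B-t_A|=|t_D-t_C|$ while the normalized pre-inner product condition reads $\mathrm{sgn}(t_B-t_A)\,\mathrm{sgn}(t_D-t_C)=1$, and together these force $t_B-t_A=t_D-t_C$. The identical argument applied to $\overrightarrow{EF}\,\Re_l\,\overrightarrow{GH}$ gives $t_F-t_E=t_H-t_G$. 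Substituting into the master formula,
\[
\left<\overrightarrow{AB},\overrightarrow{EF}\right>_{A}=(t_B-t_A)(t_F-t_E)||\overrightarrow{OP}||_A^2=(t_D-t_C)(t_H-t_G)||\overrightarrow{OP}||_A^2=\left<\overrightarrow{CD},\overrightarrow{GH}\right>_{A},
\]
which is the desired Equation (\ref{eq:1DA5}). The one delicate point is the normalization step used to convert $\Re_l$ into a coordinate equality: one must know the measures dividing the normalized pre-inner product are nonzero, which holds precisely because the non-degenerate branch has $A\neq_P B$ (so $t_B\neq t_A$), with Corollary \ref{cor:AnotequalB} ensuring the matched arrow $\overrightarrow{CD}$ is non-degenerate as well. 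Everything else is bookkeeping once the master formula is established, so that factorization is where I would concentrate the effort.
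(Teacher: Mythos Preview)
Your proof is correct and follows essentially the same approach as the paper: introduce real coordinates on $l_{OP}$ via Definition~\ref{def:A LINE}, express every pre-inner product and norm in terms of those coordinates and $||\overrightarrow{OP}||_A$, and then use the $\Re_l$ conditions to compare. Your endgame is slightly cleaner than the paper's---you directly deduce the signed equality $t_B-t_A=t_D-t_C$ from the combined magnitude and sign information, whereas the paper separately establishes $(a-b)(c-d)>0$ and $(e-f)(g-h)>0$ and then runs a small sign table to match the signs of $(a-b)(e-f)$ and $(c-d)(g-h)$---but the underlying argument is the same.
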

\begin{figure}[ht]
\centering
\includegraphics[width=0.7\linewidth]{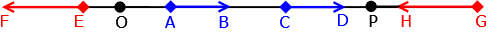}
\caption{An illustration of Theorem \ref{thm:WAS.AXIOM (5)}. Note that the equivalence between arrows is color coded where equivalent arrows share the same color.}
\label{fig:THEOREM30}
\end{figure}

\begin{proof}
 The proof of this theorem will be divided into two cases. First, if $A=_{P}B$
(similarly if $C=_{P}D$), then since $\overrightarrow{AB}\;\Re_l\;\overrightarrow{CD}$,
Proposition \ref{prop:A=00003DB,C=00003DD} implies that $C=_{P}D$.
Hence, Proposition \ref{prop:AACDZERO} implies that
\[
\left<\overrightarrow{AB},\overrightarrow{EF}\right>_{A}=\left<\overrightarrow{AA},\overrightarrow{EF}\right>_{A}=0
\,\,\text{and}\,\,\left<\overrightarrow{CD},\overrightarrow{GH}\right>_{A}=\left<\overrightarrow{CC},\overrightarrow{GH}\right>_{A}=0.
\]
Therefore, $\left<\overrightarrow{AB},\overrightarrow{EF}\right>_{A}=\left<\overrightarrow{CD},\overrightarrow{GH}\right>_{A}=0$.
 A similar argument
follows if $E=_{P}F$ (or $G=_{P}H$).

\medskip
Secondly, we consider the case where $A\neq_{P}B,\;C\neq_{P}D,\;E\neq_{P}F$,
and $G\neq_{P}H$. Then none of the quantities $||\overrightarrow{AB}||_{A},\;||\overrightarrow{CD}||_{A},\;||\overrightarrow{EF}||_{A}$,
and $||\overrightarrow{GH}||_{A}$ is zero.
Our proof technique is to write all of the quantities that appear in Definition \ref{def:RelatioOfArrowOnline-1} in terms of $\overrightarrow{OP}$.
Since $\overrightarrow{AB}=_{A}\overrightarrow{AO}+_{A}\overrightarrow{OB}$,
Definition \ref{def:MeasureOf-any-arrow} and Equation (\ref{eq:AXIOMc}) imply that
\begin{equation}
||\overrightarrow{AB}||_{A}^{2} =||\overrightarrow{AO}+_{A}\overrightarrow{OB}||_{A}^{2}
=\left<\overrightarrow{AO},\overrightarrow{AO}\right>_{A}+2\left<\overrightarrow{AO},\overrightarrow{OB}\right>_{A}
+\left<\overrightarrow{OB},\overrightarrow{OB}\right>_{A}.\label{eq:W}
\end{equation}
Since $A$
and $B$ are points in $\mathcal{P}_l$, by Definition \ref{def:A LINE}
there exist $a,\;b\in\mathbb{R}$ such that 
\begin{equation}
\overrightarrow{OA}=_{A}(a)\;\overrightarrow{OP},\qquad \overrightarrow{OB}=_{A}(b)\;\overrightarrow{OP}.\label{eq:WAAD}
\end{equation}
It follows by Definition \ref{def:(-)Minuse.arrowAB} that
\begin{equation}
\overrightarrow{AO}=_{A}-((a)\;\overrightarrow{OP}).\label{eq:WA}
\end{equation}
Now if we substitute Equation (\ref{eq:WA}) and the second equation from (\ref{eq:WAAD})
into the right hand side of the Equation (\ref{eq:W}) and use Equations (\ref{eq:A01})
and (\ref{eq:A01-1}), we get 
\[
||\overrightarrow{AB}||_{A}^{2}  =(a^{2}-2ab+b^{2})\;\left<\overrightarrow{OP},\overrightarrow{OP}\right>_{A} 
=(a-b)^{2}\;||\overrightarrow{OP}||_{A}^{2}.
\]
Taking the positive square root of both sides in the above yields
\begin{equation}
||\overrightarrow{AB}||_{A}=|a-b|\;||\overrightarrow{OP}||.\label{eq:WAS}
\end{equation}
Similarly, we can write 
\[
\overrightarrow{CD}=_{A}\overrightarrow{CO}+_{A}\overrightarrow{OD},\,\,\,
\overrightarrow{EF}=_{A}\overrightarrow{EO}+_{A}\overrightarrow{OF},\,\,\text{and}\,\,\,\overrightarrow{GH}=_{A}\overrightarrow{GO}+_{A}\overrightarrow{OH}.
\]
Since $\overrightarrow{CD}$, $\overrightarrow{EF}$, and
$\overrightarrow{GH}$ are in $\mathcal{P}_{l_A}$ we see
that the points $C,\;D,E,\;F,\;G$, and $H$ lie on the line $l_{OP}$.
Hence by Definition \ref{def:A LINE} there are real numbers $c,\;d,\;e,\;f,\;g$,
and $h$ such that 
\begin{align}
\overrightarrow{OC}=_{A}(c)\;\overrightarrow{OP},\,\,\;\overrightarrow{OD}=_{A} & (d)\;\overrightarrow{OP},\,\,\,
\overrightarrow{OE}=_{A}(e)\overrightarrow{OP},\label{eq:HOW}\\
\overrightarrow{OF}=_{A}(f)\overrightarrow{OP},\,\,\;\overrightarrow{OG}=_{A} & (g)\overrightarrow{OP},\,\,\,
\overrightarrow{OH}=_{A}(h)\overrightarrow{OH}.\label{eq:HOWA}
\end{align}
If we repeat the steps that led to Equations (\ref{eq:W}) through
(\ref{eq:WAS}) we obtain
\begin{equation}
||\overrightarrow{CD}||_{A}=|c-d|\;||\overrightarrow{OP}||,\;||\overrightarrow{EF}||_{A}=|e-f|\;||\overrightarrow{OP}||,\;||\overrightarrow{GH}||_{A}=|g-h|\;||\overrightarrow{OP}||.\label{eq:AW}
\end{equation}
Calculations similar to those used to derive Equation (\ref{eq:WAS}) show that
\begin{equation}
\left<\overrightarrow{AB},\overrightarrow{EF}\right>_{A}=  (a-b)(e-f)\left<\overrightarrow{OP},\overrightarrow{OP}\right>_{A},\,\,
\left<\overrightarrow{CD},\overrightarrow{GH}\right>_{A}=(c-d)(g-h)\left<\overrightarrow{OP},\overrightarrow{OP}\right>_{A}.
\label{eq:aww}
\end{equation}

We want to emphasize that none of the quantities $|a-b|,\;|c-d|,\;|e-f|$,
and $|g-h|$ is zero. This is because, for example, $|a-b|=0$ means
$a=b$. But by the two equations in (\ref{eq:WAAD}) $a=b$ would
imply that $\overrightarrow{OA}=_{A}\overrightarrow{OB}$. If this
is true, then we would have, by Definition \ref{def:equal arrows},
that $A=_{P}B$. This is a contradiction to our the assumption of $A\neq_{P}B$. The same argument applies for $|c-d|,\;|e-f|$,
and $|g-h|$. 

\medskip
Now by Equations (\ref{eq:WAS}), (\ref{eq:AW}), and (\ref{eq:aww}), we have 
\begin{equation}
\frac{\left<\overrightarrow{AB},\overrightarrow{EF}\right>_{A}}{||\overrightarrow{AB}||_{A}\;||\overrightarrow{EF}||_{A}}
=\frac{(a-b)(e-f)}{|a-b|\;|e-f|},\label{eq:abw}
\end{equation}
and
\begin{equation}
\frac{\left<\overrightarrow{CD},\overrightarrow{GH}\right>_{A}}{||\overrightarrow{CD}||_{A}\;||\overrightarrow{GH}||_{A}} 
 =\frac{(c-d)(g-h)}{|c-d|\;|g-h|}.\label{eq:abbw}
\end{equation}
In a similar manner we obtain
\begin{equation}
\frac{\left<\overrightarrow{AB},\overrightarrow{CD}\right>_{A}}{||\overrightarrow{AB}||_{A}\;||\overrightarrow{CD}||_{A}}=\frac{(a-b)(c-d)}{|a-b|\;|c-d|},\label{eq:bw}
\end{equation}
and
\begin{equation}
\frac{\left<\overrightarrow{EF},\overrightarrow{GH}\right>_{A}}{||\overrightarrow{EF}||_{A}\;||\overrightarrow{GH}||_{A}}=\frac{(e-f)(g-h)}{|e-f|\;|g-h|}.\label{eq:bwaa}
\end{equation}

Since $\overrightarrow{AB}\;\Re_l\;\overrightarrow{CD}$,
Definition \ref{def:RelatioOfArrowOnline-1} implies that $\frac{\left<\overrightarrow{AB},\overrightarrow{CD}\right>_{A}}{||\overrightarrow{AB}||_{A}\;||\overrightarrow{CD}||_{A}}=1$.
Hence, we conclude from Equation (\ref{eq:bw}) that $\frac{(a-b)(c-d)}{|a-b|\;|c-d|}=1$,
which means that 
\begin{equation}
(a-b)(c-d)>0.\label{eq:bwa}
\end{equation}

A similar argument shows that
\begin{equation}
(e-f)(g-h)>0.\label{eq:bwbw}
\end{equation}
The signs of the products $(a-b)(e-f)$ and $(c-d)(g-h)$
in Equations (\ref{eq:abw}) and (\ref{eq:abbw}) will decide whether
the right hand side of each of these two equations is $1$ or $-1$.
We want to prove that the products $(a-b)(e-f)$ and $(c-d)(g-h)$
have the same sign. To do so, we set up the following table.

\medskip

\begin{tabular}{|c|c|c|c|c|}
\hline 
$a-b$ & $+$ & $+$ & $-$ & $-$\tabularnewline
\hline 
$c-d$ & $+$ & $+$ & $-$ & $-$\tabularnewline
\hline 
$e-f$ & $+$ & $-$ & $+$ & $-$\tabularnewline
\hline 
$g-h$ & $+$ & $-$ & $+$ & $-$\tabularnewline
\hline 
($a-b$)($e-f$) & $+$ & $-$ & $-$ & $+$\tabularnewline
\hline 
($c-d$)($g-h$) & $+$ & $-$ & $-$ & $+$\tabularnewline
\hline 
\end{tabular}

\medskip
We conclude from the preceding table that the two products $(a-b)(e-f)$ and
$(c-d)(g-h)$ are either both positive or negative. This means that in
Equations (\ref{eq:abw}) and (\ref{eq:abbw}) we have
\begin{equation}
\frac{\left<\overrightarrow{AB},\overrightarrow{EF}\right>_{A}}{||\overrightarrow{AB}||_{A}\;||\overrightarrow{EF}||_{A}}
=\frac{\left<\overrightarrow{CD},\overrightarrow{GH}\right>_{A}}{||\overrightarrow{CD}||_{A}\;||\overrightarrow{GH}||_{A}}=\pm1.\label{eq:wsa}
\end{equation}
Now since $\overrightarrow{AB}\;\Re_l\;\overrightarrow{CD}$
and $\overrightarrow{EF}\;\Re_l\;\overrightarrow{GH}$, we have
that $||\overrightarrow{AB}||_{A}=||\overrightarrow{CD}||_{A}$
and $||\overrightarrow{EF}||_{A}=||\overrightarrow{GH}||_{A}$.
Thus Equation (\ref{eq:wsa}) simplifies to the desired conclusion.
\end{proof}

With aid of Theorem \ref{thm:WAS.AXIOM (5)} we prove the following
result which will be used directly in proving that $\Re_l$ is an equivalence
relation.
\begin{thm}
\label{thm:WAS Axiom (4)-1}Let $l_{OP}$ be a line and $\mathcal{P}_l$
be the set of points on this line. Let $\overrightarrow{AB},\;\overrightarrow{CD},\;\overrightarrow{EF}$
be three arrows in $\mathcal{P}_{l_A}$ such that $\overrightarrow{AB}\;\Re_l\;\overrightarrow{CD}$
and $\overrightarrow{EF}\;\Re_l\;\overrightarrow{CD}$. Then
$\overrightarrow{AB}\;\Re_l\;\overrightarrow{EF}$.
\end{thm}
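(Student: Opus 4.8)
The plan is to verify directly that the pair $\overrightarrow{AB},\overrightarrow{EF}$ satisfies one of the two defining alternatives of $\Re_l$ in Definition \ref{def:RelatioOfArrowOnline-1}, splitting on whether $\overrightarrow{AB}$ is degenerate. First I would record that $\Re_l$ is symmetric straight from Definition \ref{def:RelatioOfArrowOnline-1}: the clause ``$A=_{P}B$ and $C=_{P}D$'' is symmetric, and the pair $||\overrightarrow{AB}||_{A}=||\overrightarrow{CD}||_{A}$ together with $\left<\frac{\overrightarrow{AB}}{||\overrightarrow{AB}||_{A}},\frac{\overrightarrow{CD}}{||\overrightarrow{CD}||_{A}}\right>_{A}=1$ is symmetric by Equation (\ref{eq:AXIOMb}) (and Equation (\ref{eq:A01})). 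This lets me read the hypothesis $\overrightarrow{EF}\;\Re_l\;\overrightarrow{CD}$ also as $\overrightarrow{CD}\;\Re_l\;\overrightarrow{EF}$ whenever convenient.

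For the degenerate case, suppose $A=_{P}B$. Since $\overrightarrow{AB}\;\Re_l\;\overrightarrow{CD}$, Proposition \ref{prop:A=00003DB,C=00003DD} gives $C=_{P}D$, so $||\overrightarrow{CD}||_{A}=0$ by Lemma \ref{lem:||AB||=00003D0 IFF}. Then $\overrightarrow{EF}\;\Re_l\;\overrightarrow{CD}$ must hold through its first alternative (the second alternative is unavailable, as it would require dividing by $||\overrightarrow{CD}||_{A}=0$), forcing $E=_{P}F$. Thus $A=_{P}B$ and $E=_{P}F$, which is exactly the first alternative of Definition \ref{def:RelatioOfArrowOnline-1}, and $\overrightarrow{AB}\;\Re_l\;\overrightarrow{EF}$ follows.

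For the non-degenerate case, suppose $A\neq_{P}B$. Then Corollary \ref{cor:AnotequalB} gives $C\neq_{P}D$, and applying the same corollary to $\overrightarrow{CD}\;\Re_l\;\overrightarrow{EF}$ gives $E\neq_{P}F$; hence $||\overrightarrow{AB}||_{A},||\overrightarrow{CD}||_{A},||\overrightarrow{EF}||_{A}$ are all nonzero. The two hypotheses supply $||\overrightarrow{AB}||_{A}=||\overrightarrow{CD}||_{A}$ and $||\overrightarrow{EF}||_{A}=||\overrightarrow{CD}||_{A}$, so $||\overrightarrow{AB}||_{A}=||\overrightarrow{EF}||_{A}$, which is the first half of what I need. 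For the second half, the key move is to invoke Theorem \ref{thm:WAS.AXIOM (5)} on the two given relations $\overrightarrow{AB}\;\Re_l\;\overrightarrow{CD}$ and $\overrightarrow{EF}\;\Re_l\;\overrightarrow{CD}$, taking the fourth arrow $\overrightarrow{GH}$ to be $\overrightarrow{CD}$ itself, so that the conclusion collapses to
\[
\left<\overrightarrow{AB},\overrightarrow{EF}\right>_{A}=\left<\overrightarrow{CD},\overrightarrow{CD}\right>_{A}=||\overrightarrow{CD}||_{A}^{2}.
\]
Since $||\overrightarrow{CD}||_{A}^{2}=||\overrightarrow{AB}||_{A}\,||\overrightarrow{EF}||_{A}$ by the norm equalities, dividing by the nonzero product $||\overrightarrow{AB}||_{A}\,||\overrightarrow{EF}||_{A}$ yields $\left<\frac{\overrightarrow{AB}}{||\overrightarrow{AB}||_{A}},\frac{\overrightarrow{EF}}{||\overrightarrow{EF}||_{A}}\right>_{A}=1$. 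Combined with the equality of measures, this is precisely the second alternative of Definition \ref{def:RelatioOfArrowOnline-1}, so $\overrightarrow{AB}\;\Re_l\;\overrightarrow{EF}$.

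The only genuinely substantive step is the appeal to Theorem \ref{thm:WAS.AXIOM (5)}; everything else is routine case analysis plus the observation that equal-measure nonzero arrows let me pass freely between the ``inner product equals product of measures'' form and the ``normalized inner product equals $1$'' form of the relation. I expect the main difficulty to be purely organizational: choosing $\overrightarrow{GH}=\overrightarrow{CD}$ in the bookkeeping of Theorem \ref{thm:WAS.AXIOM (5)} so the right-hand side becomes $||\overrightarrow{CD}||_{A}^{2}$, and making sure each degenerate subcase is reduced to $E=_{P}F$ (invoking the first alternative of the relation) rather than mistakenly invoking the ill-defined normalized condition.
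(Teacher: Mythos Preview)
Your proposal is correct and follows essentially the same approach as the paper's proof: handle the degenerate case via Proposition \ref{prop:A=00003DB,C=00003DD}, and in the non-degenerate case invoke Theorem \ref{thm:WAS.AXIOM (5)} with the fourth arrow taken to be $\overrightarrow{CD}$ so that the right-hand side becomes $||\overrightarrow{CD}||_{A}^{2}$, then divide through using the norm equalities. Your explicit remark on the symmetry of $\Re_l$ is a nice touch that the paper leaves implicit when deducing $E=_{P}F$ from $C=_{P}D$.
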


\begin{proof}
First, if $A=_{P}B$ (similarly, if $C=_{P}D$ or $E=_{P}F$), then
since $\overrightarrow{AB}\;\Re_l\;\overrightarrow{CD}$, it follows
by Proposition \ref{prop:A=00003DB,C=00003DD} that $C=_{P}D$.
Similarly, $C=_{P}D$ and $\overrightarrow{EF}\;\Re_l\;\overrightarrow{CD}$
means that $E=_{P}F$.
We conclude that if $A=_{P}B$, then $E=_{P}F$, which
means by Definition \ref{def:RelatioOfArrowOnline-1} that $\overrightarrow{AB}\;\Re_l\;\overrightarrow{EF}$. 

\medskip
Suppose now that $A\neq_{P}B,\;C\neq_{P}D$, and $E\neq_{P}F$.
It follows by Lemma \ref{lem:||AB||=00003D0 IFF} that 
\begin{equation}
||\overrightarrow{AB}||_{A}\neq0,\,\,\;||\overrightarrow{CD}||_{A}\neq0,\,\,\;\text{and}\,\,\;||\overrightarrow{EF}||_{A}\neq0.\label{eq:KAB}
\end{equation}
Since $\overrightarrow{AB}\;\Re_l\;\overrightarrow{CD}$ and $\overrightarrow{EF}\;\Re_l\;\overrightarrow{CD}$,
Definition \ref{def:RelatioOfArrowOnline-1} implies that
\begin{equation}
\left<\frac{\overrightarrow{AB}}{||\overrightarrow{AB}||_{A}},\;\frac{\overrightarrow{CD}}{||\overrightarrow{CD}||_{A}}\right>_{A}
=\left<\frac{\overrightarrow{EF}}{||\overrightarrow{EF}||_{A}},\;\frac{\overrightarrow{CD}}{||\overrightarrow{CD}||_{A}}\right>_{A}=1,\label{eq:COR0-1}
\end{equation}
and
\begin{equation}
||\overrightarrow{AB}||_{A}=||\overrightarrow{CD}||_{A},\qquad\;||\overrightarrow{CD}||_{A}=||\overrightarrow{EF}||_{A}.\label{eq:COR1-1}
\end{equation}
Furthermore, since $\overrightarrow{AB}\;\Re_l\;\overrightarrow{CD}$ and $\overrightarrow{EF}\;\Re_l\;\overrightarrow{CD}$,
it follows by Theorem \ref{thm:WAS.AXIOM (5)} that 
\begin{equation}
\left<\overrightarrow{AB},\overrightarrow{EF}\right>_{A}=\left<\overrightarrow{CD},\overrightarrow{CD}\right>_{A}
= ||\overrightarrow{CD}||_{A}^{2}\neq0.\label{eq:qwer}
\end{equation}
Dividing both sides in Equation (\ref{eq:qwer}) by $||\overrightarrow{CD}||_{A}^{2}$
yields $\frac{\left<\overrightarrow{AB},\overrightarrow{EF}\right>_{A}}{||\overrightarrow{CD}||_{A}^{2}}=1$.
This is equivalent to $\frac{\left<\overrightarrow{AB},\overrightarrow{EF}\right>_{A}}{||\overrightarrow{AB}||_{A}\;||\overrightarrow{EF}||_{A}}=1$,
as $||\overrightarrow{AB}||_{A}=||\overrightarrow{CD}||_{A}=||\overrightarrow{EF}||_{A}$
by (\ref{eq:COR1-1}), which means by Definition \ref{def:RelatioOfArrowOnline-1}
that $\overrightarrow{AB}\;\Re_l\;\overrightarrow{EF}$.
\end{proof}

Now we show that $\Re_l$ is an equivalence relation.
\begin{thm}
\label{thm:Equvs.Relan.of.ArrowONline-1}The relation $\Re_l$ in Definition \ref{def:RelatioOfArrowOnline-1} is an
equivalence relation on $\mathcal{P}_{l_A}$.
\end{thm}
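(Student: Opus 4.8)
The plan is to verify the three defining properties of an equivalence relation---reflexivity, symmetry, and transitivity---in that order, since the transitivity argument will rely on the symmetry established beforehand together with Theorem \ref{thm:WAS Axiom (4)-1}.

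First I would establish reflexivity. Given an arbitrary arrow $\overrightarrow{AB}$ in $\mathcal{P}_{l_A}$, I split into the two cases of Definition \ref{def:RelatioOfArrowOnline-1}. If $A =_P B$, the first clause of the definition is satisfied immediately by taking both arrows to be $\overrightarrow{AB}$. If $A \neq_P B$, then $||\overrightarrow{AB}||_A \neq 0$ by Lemma \ref{lem:||AB||=00003D0 IFF}, so the measure equality $||\overrightarrow{AB}||_A = ||\overrightarrow{AB}||_A$ is trivial, and by Definition \ref{def:MeasureOf-any-arrow} together with Equation (\ref{eq:A01}) the normalized pre-inner product equals $||\overrightarrow{AB}||_A^2 / ||\overrightarrow{AB}||_A^2 = 1$; hence the second clause holds and $\overrightarrow{AB}\;\Re_l\;\overrightarrow{AB}$.

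Next I would treat symmetry. Suppose $\overrightarrow{AB}\;\Re_l\;\overrightarrow{CD}$. In the case $A =_P B$, Proposition \ref{prop:A=00003DB,C=00003DD} forces $C =_P D$, so the symmetric relation $\overrightarrow{CD}\;\Re_l\;\overrightarrow{AB}$ is obtained by simply reversing the roles of the two vanishing arrows in the first clause. In the case $A \neq_P B$ (whence $C \neq_P D$ by Corollary \ref{cor:AnotequalB}), the equality of measures is itself symmetric, while the symmetry axiom of the pre-inner product, Equation (\ref{eq:AXIOMb}), guarantees that the normalized pre-inner product is unchanged when the two arrows are interchanged; therefore it remains equal to $1$, and $\overrightarrow{CD}\;\Re_l\;\overrightarrow{AB}$.

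Finally, for transitivity I would assume $\overrightarrow{AB}\;\Re_l\;\overrightarrow{CD}$ and $\overrightarrow{CD}\;\Re_l\;\overrightarrow{EF}$ and aim to conclude $\overrightarrow{AB}\;\Re_l\;\overrightarrow{EF}$. The key observation is that Theorem \ref{thm:WAS Axiom (4)-1} is phrased with a \emph{common} third arrow appearing on the right of both hypotheses, namely $\overrightarrow{AB}\;\Re_l\;\overrightarrow{CD}$ and $\overrightarrow{EF}\;\Re_l\;\overrightarrow{CD}$. Applying the symmetry just proved to the second of my hypotheses converts $\overrightarrow{CD}\;\Re_l\;\overrightarrow{EF}$ into $\overrightarrow{EF}\;\Re_l\;\overrightarrow{CD}$, which is precisely the second input required by that theorem. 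Theorem \ref{thm:WAS Axiom (4)-1} then yields $\overrightarrow{AB}\;\Re_l\;\overrightarrow{EF}$ directly. I do not anticipate a genuine obstacle here: the substantive analytic work is already packaged inside Theorem \ref{thm:WAS Axiom (4)-1}, and the only points requiring care are the bookkeeping of the two clauses of Definition \ref{def:RelatioOfArrowOnline-1} and the correct reorientation of the transitivity hypotheses so that the cited theorem becomes applicable.
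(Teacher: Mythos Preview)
Your proposal is correct and follows essentially the same route as the paper: a case split for reflexivity, the use of Proposition \ref{prop:A=00003DB,C=00003DD} and Equation (\ref{eq:AXIOMb}) for symmetry, and an appeal to Theorem \ref{thm:WAS Axiom (4)-1} for transitivity. The only notable difference is that the paper's text invokes ``reflexivity'' together with Theorem \ref{thm:WAS Axiom (4)-1} to obtain transitivity, whereas you correctly identify that it is \emph{symmetry} which is needed to reorient $\overrightarrow{CD}\;\Re_l\;\overrightarrow{EF}$ into $\overrightarrow{EF}\;\Re_l\;\overrightarrow{CD}$ so that the hypotheses of that theorem are met; your version is the accurate one.
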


\begin{proof}
We first show that the relation $\Re_l$ is reflexive, that is $\overrightarrow{AB}\;\Re_l\;\overrightarrow{AB}$.
Let $\overrightarrow{AB}\in\mathcal{P}_{l_A}$. If $A=_{P}B$, then
it follows directly by Definition \ref{def:MeasureOf-any-arrow}
that $\overrightarrow{AB}\;\Re_l\;\overrightarrow{AB}$. Now suppose
that $A\neq_{P}B$. Then Definition \ref{def:MeasureOf-any-arrow}
and Lemma \ref{lem:tAB} imply that
\[
\left<\frac{\overrightarrow{AB}}{||\overrightarrow{AB}||_{A}},\;\frac{\overrightarrow{AB}}{||\overrightarrow{AB}||_{A}}\right>_{A}
=\left|\left|\frac{\overrightarrow{AB}}{||\overrightarrow{AB}||_{A}}\right|\right|_{A}^{2}=1.
\]
Thus, $\overrightarrow{AB}\;\Re_l\;\overrightarrow{AB}$ for any $\overrightarrow{AB}$
in $\mathcal{P}_{l_A}$.
Next we show that $\Re_l$ is symmetric. Let $\overrightarrow{AB}\;\Re_l\;\overrightarrow{CD}$.
If $A=_{P}B$ (similarly, if $C=_{P}D$), then since $\overrightarrow{AB}\;\Re_l\;\overrightarrow{CD}$,
it follows by Proposition \ref{prop:A=00003DB,C=00003DD} that $C=_{P}D$.
Now since $C=_{P}D$ and $A=_{P}B$, it follows by Definition \ref{def:RelatioOfArrowOnline-1}
that $\overrightarrow{CD}\;\Re_l\;\overrightarrow{AB}$. If $A\neq_{P}B,\;C\neq_{P}D$,
and $\overrightarrow{AB}\;\Re_l\;\overrightarrow{CD}$, then by Definition
\ref{def:RelatioOfArrowOnline-1} we have 
\begin{equation}
||\overrightarrow{AB}||_{A}=||\overrightarrow{CD}||_{A}\,\,\text{and}\,\,
\left<\frac{\overrightarrow{AB}}{||\overrightarrow{AB}||_{A}},\;\frac{\overrightarrow{CD}}{||\overrightarrow{CD}||_{A}}\right>_{A}=1\label{eq:E.R.ARROW2-1}
\end{equation}
But by Equation (\ref{eq:AXIOMb}) we have 
\begin{equation}
\left<\frac{\overrightarrow{AB}}{||\overrightarrow{AB}||_{A}},\;\frac{\overrightarrow{CD}}{||\overrightarrow{CD}||_{A}}\right>_{A}
=\left<\frac{\overrightarrow{CD}}{||\overrightarrow{CD}||_{A}},\;\frac{\overrightarrow{AB}}{||\overrightarrow{AB}||_{A}}\right>_{A} =1.\label{eq:EQV-RAL2-1}
\end{equation}
It follows by the Equations (\ref{eq:E.R.ARROW2-1}), (\ref{eq:EQV-RAL2-1}),
and Definition \ref{def:RelatioOfArrowOnline-1} that $\overrightarrow{CD}\;\Re_l\;\overrightarrow{AB}$
which means that $\Re_l$ is symmetric. Now let $\overrightarrow{AB},\;\overrightarrow{CD},\;\overrightarrow{EF}$
be three arrows in $\mathcal{P}_{l_A}$ such that $\overrightarrow{AB}\;\Re_l\;\overrightarrow{CD}$
and $\overrightarrow{CD}\;\Re_l\;\overrightarrow{EF}$. The transitivity
of $\Re_l$, that is $\overrightarrow{AB}\;\Re_l\;\overrightarrow{EF}$,
follows immediately from the reflexivity of $\Re_l$ and Theorem \ref{thm:WAS Axiom (4)-1}. Therefore,
$\Re_l$ is an equivalence relation on $\mathcal{P}_{l_A}$.
\end{proof}

\subsection{Existence of Parallel Arrow in $\mathcal{P}_{l_A}$}

Now we introduce the following important theorem which is the analog of the parallel axiom in Euclidean
geometry.  Furthermore, as we will later discover, this theorem supplants
Axiom 5 in Sections \ref{sec:6.An-Equivalence-Relation}
and \ref{sec:7.The-Equivalence-C}, which shows that fewer axioms are needed
to construct an arrow space whose underlying set of points is $\mathcal{P}_l$.
 
 \begin{thm}({\it Existence of a Unique Parallel Arrow})
\label{thm:WAS.AXIOM (6)}Given an arrow $\overrightarrow{AB}$
on a line $l_{AB}$ with $A\neq_{P}B$ and any point $P$ on $l_{AB}$,
there exists a unique point $K$ on $l_{AB}$ (and a unique
point $K'$ on $l_{AB}$ ) such that $\overrightarrow{AB}\;\Re_l\;\overrightarrow{PK}$ 
(likewise $\overrightarrow{AB}\;\Re_l\;\overrightarrow{K'P}$).
See Figure 5.10.
\end{thm}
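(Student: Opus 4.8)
The plan is to coordinatize the line and then separate existence from uniqueness. First I would assign to each point $X$ on $l_{AB}$ the unique scalar $x$ determined by $\overrightarrow{AX} =_A (x)\overrightarrow{AB}$; such an $x$ exists by Definition \ref{def:A LINE} and is unique by Theorem \ref{thm:aABbAB}, so that $A$, $B$, and $P$ receive coordinates $0$, $1$, and some $p$ respectively. To produce the parallel arrow $\overrightarrow{PK}$ I would take $K$ to be the point with coordinate $p+1$, i.e. the point furnished by $(p+1)\overrightarrow{AB} =_A \overrightarrow{AK}$; this $K$ exists by Definition \ref{def:(Scalar-Multiplication-of}, lies on $l_{AB}$ by Definition \ref{def:A LINE}, and satisfies $K \neq_P P$ because $p+1 \neq p$.

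To verify that this $K$ works, I would reuse the coordinate computations from the proof of Theorem \ref{thm:WAS.AXIOM (5)}: writing $\overrightarrow{PK} =_A \overrightarrow{PA} +_A \overrightarrow{AK}$ with $\overrightarrow{PA} =_A -((p)\overrightarrow{AB})$ and $\overrightarrow{AK} =_A (p+1)\overrightarrow{AB}$, the bilinearity supplied by Axioms 1 and 2 gives $||\overrightarrow{PK}||_A = |(p+1)-p|\,||\overrightarrow{AB}||_A = ||\overrightarrow{AB}||_A$ and $\langle \overrightarrow{AB},\overrightarrow{PK}\rangle_A = (1)(1)\,||\overrightarrow{AB}||_A^2$, so the normalized pre-inner product equals $1$. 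By Definition \ref{def:RelatioOfArrowOnline-1} this is exactly $\overrightarrow{AB}\;\Re_l\;\overrightarrow{PK}$, which settles existence.

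For uniqueness I would lean on the fact, just established, that $\Re_l$ is an equivalence relation (Theorem \ref{thm:Equvs.Relan.of.ArrowONline-1}). If $K_1$ is any point of $l_{AB}$ with $\overrightarrow{AB}\;\Re_l\;\overrightarrow{PK_1}$, then symmetry and transitivity give $\overrightarrow{PK}\;\Re_l\;\overrightarrow{PK_1}$. Since $A\neq_P B$, Corollary \ref{cor:AnotequalB} forces $P\neq_P K$ and $P\neq_P K_1$, so Definition \ref{def:RelatioOfArrowOnline-1} yields $||\overrightarrow{PK}||_A = ||\overrightarrow{PK_1}||_A$ together with $\left\langle \frac{\overrightarrow{PK}}{||\overrightarrow{PK}||_A}, \frac{\overrightarrow{PK_1}}{||\overrightarrow{PK_1}||_A}\right\rangle_A = 1$. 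These are precisely the hypotheses of Proposition \ref{lem:ABAD0} applied to the two arrows sharing the tail $P$, whence $K =_P K_1$.

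The companion statement for $\overrightarrow{K'P}$ proceeds identically, taking $K'$ to be the point of coordinate $p-1$ so that $\overrightarrow{K'P}$ again has signed coordinate $1$. The only extra wrinkle is that competing candidates $\overrightarrow{K'P}$ and $\overrightarrow{K_1'P}$ share a head rather than a tail, so before invoking Proposition \ref{lem:ABAD0} I would first pass to $\overrightarrow{PK'}$ and $\overrightarrow{PK_1'}$ via Definition \ref{def:(-)Minuse.arrowAB}, Equation (\ref{eq:LEM11}), and the negation rule (\ref{eq:A01-1}), all of which leave both the norms and the normalized pre-inner product unchanged. I expect the main obstacle to be the bookkeeping in the existence step, namely making the coordinate formulas for $||\overrightarrow{PK}||_A$ and $\langle \overrightarrow{AB},\overrightarrow{PK}\rangle_A$ fully airtight; the uniqueness is then immediate once transitivity of $\Re_l$ and Proposition \ref{lem:ABAD0} are combined.
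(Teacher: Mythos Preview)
Your existence argument is essentially the paper's: both coordinatize $P$ on $l_{AB}$ via $\overrightarrow{AP}=_A(t)\overrightarrow{AB}$ (your $p$ is the paper's $t$), take $K$ at parameter $t+1$, and verify the norm and direction conditions by expanding $\overrightarrow{PK}=_A\overrightarrow{PA}+_A\overrightarrow{AK}$ through Axioms~1 and~2.

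Your uniqueness argument is genuinely different, and arguably cleaner. The paper argues that the norm constraint $||\overrightarrow{PK}||_A=||\overrightarrow{AB}||_A$ forces the parameter of $K$ to be one of $t\pm1$, then appeals to the uniqueness of $t$ and Axiom~3; it leaves implicit the check that the parameter $t-1$ fails the direction condition for $\overrightarrow{PK}$ (that value instead yields $K'$). You bypass this case split entirely: since $\Re_l$ is already known to be an equivalence relation (Theorem~\ref{thm:Equvs.Relan.of.ArrowONline-1}), two candidates $K,K_1$ give $\overrightarrow{PK}\;\Re_l\;\overrightarrow{PK_1}$, and Proposition~\ref{lem:ABAD0} forces $K=_P K_1$. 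This costs you a dependence on the equivalence-relation result, but that was proved in Subsection~5.2 without invoking the present theorem, so there is no circularity. Your treatment of $K'$ via negation (reducing to common tail before applying Proposition~\ref{lem:ABAD0}) is also a tidy detail the paper leaves to the reader.
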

\begin{figure}[ht]\label{paraarrow}
\includegraphics[scale=0.4]{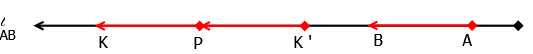}
\caption{Existence of a parallel arrow.}
\end{figure}

\begin{proof}
Let $\overrightarrow{AB}$ be an arrow of $\mathcal{P}_{l_A}$ with $A\neq_{P}B$. Let $P\in l_{AB}$
be any point. By Definition \ref{def:A LINE} and Theorem \ref{thm:aABbAB} there exists a unique
$t\in\mathbb{R}$ such that 
\begin{equation}
\overrightarrow{AP}=_{A}(t)\overrightarrow{AB}.\label{eq:LL}
\end{equation}
By using Definition \ref{def:(-)Minuse.arrowAB} this can be rewritten
as 
\begin{equation}
\overrightarrow{PA}=_{A}-((t)\overrightarrow{AB}).\label{eq:LW}
\end{equation}
We are looking for some real number $s\in\mathbb{R}$ and
a point $K\in l_{AB}$ such that 
\begin{equation}
\overrightarrow{AK}=_{A}(s)\overrightarrow{AB},\label{eq:LAAC}
\end{equation}
\begin{equation}
||\overrightarrow{AB}||_{A}=||\overrightarrow{PK}||_{A},\label{eq:LAAS}
\end{equation}
and
\begin{equation}
\left<\frac{\overrightarrow{AB}}{||\overrightarrow{AB}||_{A}},\;\frac{\overrightarrow{PK}}{||\overrightarrow{PK}||_{A}}\right>_{A}=1.\label{eq:LAAX}
\end{equation}

For the fixed points $A,\;B,\;P$, Equation (\ref{eq:LAAS}) indicates that we should
start with $||\overrightarrow{AB}||_{A}=||\overrightarrow{PK}||_{A}$
and uniquely solve for $K$ in a manner
that satisfies Equations (\ref{eq:LAAC}), (\ref{eq:LAAS}), and (\ref{eq:LAAX}).
This will involve writing all the quantities in Equation (\ref{eq:LAAS}) in terms of $\overrightarrow{AB}$.
Since $\overrightarrow{PK} = \overrightarrow{PA} + \overrightarrow{AK}$, we have 
\begin{align}
||\overrightarrow{AB}||_{A}^{2} & =||\overrightarrow{PK}||_{A}^{2} =||\overrightarrow{PA}+_{A}\overrightarrow{AK}||_{A}^{2}\notag\\
&= \left<\overrightarrow{PA},\overrightarrow{PA}\right>_{A}+2\left<\overrightarrow{PA},\overrightarrow{AK}\right>_{A}
+\left<\overrightarrow{AK},\overrightarrow{AK}\right>_{A} = (t-s)^{2}||\overrightarrow{AB}||_{A}^{2}.\label{eq:LRE}
\end{align}
Since $A\neq_{P}B$, Theorem \ref{lem:||AB||=00003D0 IFF} implies
that $||\overrightarrow{AB}||_{A}\neq0$. Hence we can divide both
sides in Equation (\ref{eq:LRE}) by $||\overrightarrow{AB}||_{A}^{2}$
to get $(t-s)^{2}=1$. Taking the square root of both sides of the
later equation yields
\begin{equation}
t-s=\pm1.\label{eq:LMBN}
\end{equation}
The above equation gives us two values of $s$, namely
\begin{equation}
s_{0}=t+1\qquad s_{1}=t-1.\label{eq:LQ}
\end{equation}
Notice that the number $t$ is fixed by the Equation (\ref{eq:LL})
which yields the uniqueness of $s_{0},\;s_{1}$ in (\ref{eq:LQ}). 
The uniqueness of $s_0$ and $s_1$, in conjunction with Axiom 3, implies the uniqueness of $K$ and $K'$, respectively. 
Therefore, we analyze the situation with $s_0$, (leaving $s_1$ to the reader, and express our claim as follows: there
exists a unique point $K$ of $l_{AB}$ given by the equation 
\begin{equation}
\overrightarrow{AK}=_{A}(s_{0})\overrightarrow{AB},\label{eq:claim}
\end{equation}
where $s_{0}=t+1$. By construction this point satisfies Equation
(\ref{eq:LAAS}).
It remains to confirm that this $K$ also satisfies Equation (\ref{eq:LAAX}). Indeed,
since $||\overrightarrow{PK}||_{A}=||\overrightarrow{AB}||_{A}$, we have 
\begin{equation}
\left<\frac{\overrightarrow{AB}}{||\overrightarrow{AB}||_{A}},\;\frac{\overrightarrow{PK}}{||\overrightarrow{PK}||_{A}}\right>_{A}=\left<\frac{\overrightarrow{AB}}{||\overrightarrow{AB}||_{A}},\;\frac{\overrightarrow{PK}}{||\overrightarrow{AB}||_{A}}\right>_{A}
= \frac{\left<\overrightarrow{AB},\overrightarrow{PK}\right>_{A}}{\left<\overrightarrow{AB},\overrightarrow{AB}\right>_{A}}.\label{eq:SECO}
\end{equation}
Hence, in order to prove the Equation (\ref{eq:LAAX}), it is clear
from the Equation (\ref{eq:SECO}) that we need only to show that
\begin{equation}
\left<\overrightarrow{AB},\overrightarrow{PK}\right>_{A}=\left<\overrightarrow{AB},\overrightarrow{AB}\right>_{A}.\label{eq:SEC}
\end{equation}
Now since $\overrightarrow{PK}=\overrightarrow{PA}+_{A}\overrightarrow{AK}$,
by Equation (\ref{eq:AXIOMc}) we have 
\begin{align*}
\left<\overrightarrow{AB},\overrightarrow{PK}\right>_{A} 
=\left<\overrightarrow{AB},\overrightarrow{PA}+_{A}\overrightarrow{AK}\right>_{A}
=\left<\overrightarrow{AB},\overrightarrow{PA}\right>_{A}+\left<\overrightarrow{AB},\overrightarrow{AK}\right>_{A}.
\end{align*}
Since $\overrightarrow{AK}=_{A}(s_{0})\overrightarrow{AB}=_{A}(t+1)\;\overrightarrow{AB}$
and $\overrightarrow{PA}=_{A}-((t)\overrightarrow{AB)}$,
the right hand side in the above becomes
\begin{align*}
\left<\overrightarrow{AB},\overrightarrow{PK}\right>_{A}
&=\left<\overrightarrow{AB},-((t)\overrightarrow{AB})\right>_{A}+\left<\overrightarrow{AB},(t+1)\;\overrightarrow{AB}\right>_{A}\\
&=-t\left<\overrightarrow{AB},\overrightarrow{AB}\right>_{A}+(t+1)\left<\overrightarrow{AB},\overrightarrow{AB}\right>_{A}.
\end{align*}
where the final equality follows from Equation (\ref{eq:A01}).
Rearranging the above yields $\left<\overrightarrow{AB},\overrightarrow{PK}\right>_{A}=\left<\overrightarrow{AB},\overrightarrow{AB}\right>_{A}$.
This means that Equation (\ref{eq:SEC}) holds and combining Equations
(\ref{eq:SECO}) and (\ref{eq:SEC}) yields Equation (\ref{eq:LAAX}) as desired.
\end{proof}

\section{\label{sec:6.An-Equivalence-Relation}An Equivalence Relation on
Arrow Space}

In this section we consider $\mathcal{P}$ to be any infinite set
of points (including a set of points of a line). All definitions,
axioms, and results from Sections 1 through 5 will be considered
here unless otherwise is stated. 
For an arbitrary arrow space $\mathcal{P}_{A}$, we need to extend the definition
of an equivalence relation on a line introduced in Section \ref{sec:4.Eq.Clss}, which means
restating Definition
\ref{def:RelatioOfArrowOnline-1} in the context of an arbitrary arrow space and defining $\Re$.
 However, we need to make some
changes to show that $\Re$ is an equivalence relation. In Section
\ref{sec:4.Eq.Clss} we had all points contained in Line $l_{OP}$ and this restriction 
in direction allowed us to write all arrows as a scalar multiple of 
a fixed arrow $\overrightarrow{OP}$ and directly prove Theorem \ref{thm:WAS.AXIOM (5)}.
This result was key to proving the transitivity of the relation
$\Re_l$; see Theorem \ref{thm:WAS Axiom (4)-1}. However, if we try
to prove the analog of Theorem \ref{thm:WAS.AXIOM (5)} for a general arrow space we would face the difficulty expressing arrows
in terms of one fixed arrow. Therefore, since this statement is crucial
to prove transitivity of the relation $\Re$, we express it as an axiom,
namely Axiom 4.  Also, Theorem \ref{thm:WAS.AXIOM (6)} from Section
\ref{sec:4.Eq.Clss}, namely the existence of a parallel arrow, must also be restated as Axiom 5 for
the same reason. 
Once $\Re$ is shown to be an equivalence relation we can supplement $\mathcal{P}_{A}$
with vector algebra and form the associated vector space $\mathcal{P}_v$. 
The construction of $\mathcal{P}_v$ from $\mathcal{P}_A$ can also be applied to 
$\mathcal{P}_{l_A}$, taking into account that the statements
of Axiom 4 and 5 from this section will be replaced by Theorems \ref{thm:WAS.AXIOM (5)}
and  \ref{thm:WAS.AXIOM (6)} respectively. Thus, less axioms are required in the case of line than the general case. 

\medskip
We start by defining a relation $\Re$ on $\mathcal{P}_A$.

\begin{defn}
\label{def:RelatioOfArrowOnline}Let $\overrightarrow{AB}$ and $\overrightarrow{CD}$
be two arrows in $\mathcal{P}_{A}$. We say that $\overrightarrow{AB}\;\Re\;\overrightarrow{CD}$,
if and only if either $A=_{P}B$ and $C=_{P}D$, or
\[
||\overrightarrow{AB}||_{A}=||\overrightarrow{CD}||_{A}\,\,\text{and}\,\,
\left<\frac{\overrightarrow{AB}}{||\overrightarrow{AB}||_{A}},\;\frac{\overrightarrow{CD}}{||\overrightarrow{CD}||_{A}}\right>_{A}=1.
\]
\end{defn}
We need the following axiom to prove transitivity of the relation
$\Re$.

Axiom 4. 
Given $\overrightarrow{AB}$, $\overrightarrow{CD}$,
$\overrightarrow{EF}$, and $\overrightarrow{GH}$ such that $\overrightarrow{AB}\;\Re\;\overrightarrow{CD}$
and $\overrightarrow{EF}\;\Re\;\overrightarrow{GH}$, then 
\begin{equation}
\left<\overrightarrow{AB},\overrightarrow{EF}\right>_{A}=\left<\overrightarrow{CD},\overrightarrow{GH}\right>_{A}.\label{eq:A5}
\end{equation}

\begin{figure}[ht]
\label{fig:K1P1L1}
\includegraphics[scale=0.48]{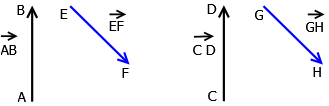}
\caption{An illustration of Axiom 4 where equivalent arrows share the same color.}
\end{figure}

The proof of the following theorem is technically the same as that
of Theorem \ref{thm:WAS Axiom (4)-1},
where Equation (\ref{eq:A5}) of Axiom 4 is to be used
instead of Theorem \ref{thm:WAS.AXIOM (5)}.
\begin{thm}
\label{cor:WAS Axiom (4)}Let $\overrightarrow{AB},\;\overrightarrow{CD},\;\overrightarrow{EF}$
be three arrows in $\mathcal{P}_{A}$ such that $\overrightarrow{AB}\;\Re\;\overrightarrow{CD}$
and $\overrightarrow{CD}\;\Re\;\overrightarrow{EF}$. Then we have
$\overrightarrow{AB}\;\Re\;\overrightarrow{EF}$.
\end{thm}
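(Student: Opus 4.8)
The plan is to follow the proof of Theorem \ref{thm:WAS Axiom (4)-1} almost verbatim, replacing every occurrence of $\Re_l$ by $\Re$ and substituting Axiom 4 for the role played there by Theorem \ref{thm:WAS.AXIOM (5)}. The one genuine point of care is bookkeeping: Theorem \ref{thm:WAS Axiom (4)-1} was stated with second hypothesis $\overrightarrow{EF}\;\Re_l\;\overrightarrow{CD}$, whereas here the second hypothesis reads $\overrightarrow{CD}\;\Re\;\overrightarrow{EF}$, with the roles of the two arrows interchanged. So the first thing I would do is record that $\Re$ is symmetric: from Definition \ref{def:RelatioOfArrowOnline} together with the symmetry of the pre-inner product, Equation (\ref{eq:AXIOMb}), the relation $\overrightarrow{CD}\;\Re\;\overrightarrow{EF}$ immediately yields $\overrightarrow{EF}\;\Re\;\overrightarrow{CD}$. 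This realignment puts the two hypotheses into the form $\overrightarrow{AB}\;\Re\;\overrightarrow{CD}$ and $\overrightarrow{EF}\;\Re\;\overrightarrow{CD}$ that is needed to feed Axiom 4, and it is the only step that is not a direct transcription of the Section \ref{sec:4.Eq.Clss} argument.

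Next I would dispose of the degenerate case. If $A=_{P}B$ (and symmetrically if either $C=_{P}D$ or $E=_{P}F$ holds), then Definition \ref{def:RelatioOfArrowOnline} applied to $\overrightarrow{AB}\;\Re\;\overrightarrow{CD}$ forces $C=_{P}D$, and in turn Definition \ref{def:RelatioOfArrowOnline} applied to $\overrightarrow{EF}\;\Re\;\overrightarrow{CD}$ forces $E=_{P}F$. Since $A=_{P}B$ and $E=_{P}F$, the first clause of Definition \ref{def:RelatioOfArrowOnline} gives $\overrightarrow{AB}\;\Re\;\overrightarrow{EF}$. These are just the $\Re$-analogs of Proposition \ref{prop:A=00003DB,C=00003DD} and Corollary \ref{cor:AnotequalB}, which hold word for word because their proofs use only the shape of the defining relation.

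In the main case I would assume $A\neq_{P}B$, $C\neq_{P}D$, and $E\neq_{P}F$, so that by Lemma \ref{lem:||AB||=00003D0 IFF} none of $||\overrightarrow{AB}||_{A}$, $||\overrightarrow{CD}||_{A}$, $||\overrightarrow{EF}||_{A}$ vanishes, and from the two hypotheses together with Definition \ref{def:RelatioOfArrowOnline} I would extract the norm equalities $||\overrightarrow{AB}||_{A}=||\overrightarrow{CD}||_{A}=||\overrightarrow{EF}||_{A}$. Applying Axiom 4 to the aligned pairs $\overrightarrow{AB}\;\Re\;\overrightarrow{CD}$ and $\overrightarrow{EF}\;\Re\;\overrightarrow{CD}$ gives
\[
\left<\overrightarrow{AB},\overrightarrow{EF}\right>_{A}=\left<\overrightarrow{CD},\overrightarrow{CD}\right>_{A}=||\overrightarrow{CD}||_{A}^{2}\neq0.
\]
Dividing by $||\overrightarrow{CD}||_{A}^{2}$ and using the norm equalities to rewrite the denominator as $||\overrightarrow{AB}||_{A}\;||\overrightarrow{EF}||_{A}$ yields $\left<\frac{\overrightarrow{AB}}{||\overrightarrow{AB}||_{A}},\frac{\overrightarrow{EF}}{||\overrightarrow{EF}||_{A}}\right>_{A}=1$, which together with $||\overrightarrow{AB}||_{A}=||\overrightarrow{EF}||_{A}$ is precisely the condition in Definition \ref{def:RelatioOfArrowOnline} for $\overrightarrow{AB}\;\Re\;\overrightarrow{EF}$. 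I do not expect a substantive obstacle here, since every step is either quoted from Section \ref{sec:4.Eq.Clss} or is the single invocation of Axiom 4; the only thing that could go wrong is skipping the symmetry preprocessing of the first paragraph, without which Axiom 4 would not place $\left<\overrightarrow{AB},\overrightarrow{EF}\right>_{A}$ on the left-hand side and the computation would fail to close.
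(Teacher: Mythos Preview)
Your proposal is correct and follows essentially the same approach the paper sketches: the paper's own ``proof'' merely instructs the reader to transcribe the argument of Theorem \ref{thm:WAS Axiom (4)-1}, substituting Axiom 4 for Theorem \ref{thm:WAS.AXIOM (5)}. Your observation that the second hypothesis must first be flipped by symmetry of $\Re$ to match the format needed for Axiom 4 is a point of care the paper leaves implicit.
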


Since proving that the relation $\Re$ is an equivalence relation
is similar to the proof of Theorem \ref{thm:Equvs.Relan.of.ArrowONline-1}
(notice that transitivity of $\Re$ follows directly from Theorem
\ref{cor:WAS Axiom (4)}) we will state the theorem and skip the
proof to avoid repetition.
\begin{thm}
\label{thm:Equvs.Relan.of.ArrowONline}The relation $\Re$ in Definition
\ref{def:RelatioOfArrowOnline} is an equivalence relation on $\mathcal{P}_{A}$.
\end{thm}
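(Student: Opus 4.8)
The plan is to verify the three defining properties of an equivalence relation for $\Re$, following verbatim the template used for $\Re_l$ in Theorem~\ref{thm:Equvs.Relan.of.ArrowONline-1}, with the sole structural change that the line-specific Theorem~\ref{thm:WAS.AXIOM (5)} is replaced throughout by Axiom 4 (equivalently, its consequence Theorem~\ref{cor:WAS Axiom (4)}). Before starting, I would record the $\Re$-analogs of Proposition~\ref{prop:A=00003DB,C=00003DD} and Corollary~\ref{cor:AnotequalB}: if $\overrightarrow{AB}\;\Re\;\overrightarrow{CD}$ and $A=_{P}B$, then $C=_{P}D$ (and dually, if $A\neq_{P}B$ then $C\neq_{P}D$). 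Both follow immediately from Definition~\ref{def:RelatioOfArrowOnline}, since $A=_{P}B$ forces $||\overrightarrow{AB}||_{A}=0$ by Lemma~\ref{lem:||AB||=00003D0 IFF}, so the second clause of the definition (which divides by $||\overrightarrow{AB}||_{A}$) cannot apply and only the first clause remains.

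For reflexivity I would fix $\overrightarrow{AB}\in\mathcal{P}_{A}$ and split on whether $A=_{P}B$. If $A=_{P}B$, the first clause of Definition~\ref{def:RelatioOfArrowOnline} holds trivially. If $A\neq_{P}B$, then $||\overrightarrow{AB}||_{A}\neq0$ by Lemma~\ref{lem:||AB||=00003D0 IFF}, the length condition is immediate, and Equation~(\ref{eq:A01}) together with Definition~\ref{def:MeasureOf-any-arrow} gives
\[
\left<\frac{\overrightarrow{AB}}{||\overrightarrow{AB}||_{A}},\frac{\overrightarrow{AB}}{||\overrightarrow{AB}||_{A}}\right>_{A}=\frac{\left<\overrightarrow{AB},\overrightarrow{AB}\right>_{A}}{||\overrightarrow{AB}||_{A}^{2}}=1,
\]
so $\overrightarrow{AB}\;\Re\;\overrightarrow{AB}$. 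For symmetry I would assume $\overrightarrow{AB}\;\Re\;\overrightarrow{CD}$. In the degenerate case $A=_{P}B$ the analog of Proposition~\ref{prop:A=00003DB,C=00003DD} yields $C=_{P}D$, whence the first clause of Definition~\ref{def:RelatioOfArrowOnline} gives $\overrightarrow{CD}\;\Re\;\overrightarrow{AB}$. In the case $A\neq_{P}B$ (hence $C\neq_{P}D$) the length equality is manifestly symmetric, and the symmetry Axiom, Equation~(\ref{eq:AXIOMb}), shows the unit pre-inner product is symmetric, so $\overrightarrow{CD}\;\Re\;\overrightarrow{AB}$ again.

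Transitivity is then immediate: it is exactly the content of Theorem~\ref{cor:WAS Axiom (4)}, which already asserts that $\overrightarrow{AB}\;\Re\;\overrightarrow{CD}$ and $\overrightarrow{CD}\;\Re\;\overrightarrow{EF}$ imply $\overrightarrow{AB}\;\Re\;\overrightarrow{EF}$. Combining reflexivity, symmetry, and transitivity completes the proof.

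I expect no real obstacle in reflexivity or symmetry, which are formal consequences of positive-definiteness and of the symmetry of the pre-inner product. The genuine content sits in transitivity, and it is precisely this content that cannot be reconstructed from Axioms 0--3 in an arbitrary arrow space. In Section~\ref{sec:4.Eq.Clss} the proof of Theorem~\ref{thm:WAS.AXIOM (5)} hinged on writing every arrow on the line as a scalar multiple of the single fixed arrow $\overrightarrow{OP}$, reducing the argument to sign bookkeeping on real coefficients; in a general $\mathcal{P}_{A}$ there is no single arrow to which all others are proportional, so that reduction is unavailable. This is why the statement must be postulated as Axiom 4, after which Theorem~\ref{cor:WAS Axiom (4)} (proved exactly as Theorem~\ref{thm:WAS Axiom (4)-1}, with Axiom 4 used in place of Theorem~\ref{thm:WAS.AXIOM (5)}) delivers transitivity and the theorem follows.
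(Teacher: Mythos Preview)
Your proposal is correct and follows exactly the approach the paper takes: the paper explicitly skips the proof, noting that it is identical to the proof of Theorem~\ref{thm:Equvs.Relan.of.ArrowONline-1} with transitivity supplied by Theorem~\ref{cor:WAS Axiom (4)} (which in turn rests on Axiom~4 in place of Theorem~\ref{thm:WAS.AXIOM (5)}). Your case splits for reflexivity and symmetry, your invocation of Equation~(\ref{eq:AXIOMb}) for symmetry, and your direct appeal to Theorem~\ref{cor:WAS Axiom (4)} for transitivity all mirror the template of Theorem~\ref{thm:Equvs.Relan.of.ArrowONline-1} precisely.
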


\medskip
Now we are ready to introduce vectors.
\begin{defn}
\label{def: (A Vector)}Consider the family of all equivalence classes
that are obtained from Theorem \ref{thm:Equvs.Relan.of.ArrowONline}
and denote it by $\mathcal{P}_{v}$. We call each equivalence class
$[\overrightarrow{AB}]\in\mathcal{P}_{v}$ a vector, denoted by $v$.
In particular, if $A=_{P}B$, then we call the equivalence class $[\overrightarrow{AA}]$
the zero vector and denote it by $\overrightarrow{0}$.
\end{defn}

The following axiom represents a statement that combines equivalence
classes (vectors), points, and arrows. Basically, it says that for
any vector if a point is given, then there exist two unique arrows
(one has the given point as a tail and the other has it as a head),
and these two arrows are representatives of the given vector.

\medskip
Axiom 5. ({\it Existence of a Unique Parallel Arrow}) Given an arrow $\overrightarrow{AB}$ with $A\neq_{P}B$ and
any point $P\in \mathcal{P}$, there exists a unique point $K$ (and
a unique point K') in $\mathcal{P}$ such that $\overrightarrow{AB}\;\Re\;\overrightarrow{PK}$,
that is

\begin{equation}
||\overrightarrow{AB}||_{A}=||\overrightarrow{PK}||_{A}\;\;\,\text{and}\;\;\,\left<\frac{\overrightarrow{AB}}{||\overrightarrow{AB}||_{A}},\;\frac{\overrightarrow{PK}}{||\overrightarrow{PK}||_{A}}\right>_{A}=1,\label{eq:AXIOM 6}
\end{equation}
where $P,\;K$ are the tail and head of $\overrightarrow{PK}$, respectively,
(likewise, $||\overrightarrow{AB}||_{A}=||\overrightarrow{K'P}||_{A}$, 
$\left<\frac{\overrightarrow{AB}}{||\overrightarrow{AB}||_{A}},\;\frac{\overrightarrow{K'P}}{||\overrightarrow{K'P}||_{A}}\right>_{A}=1$,
where $K',\;P$ are the tail and head of $\overrightarrow{K'P}$,
respectively.)
\begin{figure}[ht]
\includegraphics[scale=0.4]{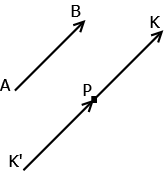}
\caption{A depiction of Axiom 5 where $\protect\overrightarrow{PK}$ and $\protect\overrightarrow{K'P}$ are equivalent to $\protect\overrightarrow{AB}$.}
\end{figure}

Next we use Axiom 5 and Definition \ref{def:ARROWSADDITIONHEADTAIL}
to define an addition of the equivalence classes (vector addition).
We use different notations for equality and addition of equivalence
classes than that of arrows, namely $=_{V}$ and $+_{V}$, respectively.
\begin{defn}
\label{def:VectorAddition}Let $[\overrightarrow{AB}]$ and $[\overrightarrow{CD}]\in\mathcal{P}_{v}$.
Let $P$ be any point in $\mathcal{P}$. Consider the two unique arrows
$\overrightarrow{KP}$ and $\overrightarrow{PL}$ that we get when
we apply Axiom 5 to $\overrightarrow{AB}$ with the point $P$, and
$\overrightarrow{CD}$ with the point $P$, respectively, such that
$\overrightarrow{KP}\;\Re\;\overrightarrow{AB}$ and $\overrightarrow{PL}\;\Re\;\overrightarrow{CD}$.
We define $[\overrightarrow{AB}]+_{V}[\overrightarrow{CD}]:=_{V}\;[\overrightarrow{KP}+_{A}\overrightarrow{PL}]=_{V}[\overrightarrow{KL}]$.
\begin{figure}[ht]
\includegraphics[scale=0.4]{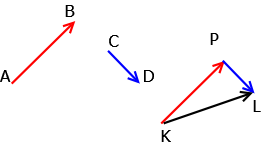}
\caption{An illustration of vector addition where equivalent arrows share the same color.}
\end{figure}
\end{defn}

Now we want to show that the addition $[\overrightarrow{AB}]+_{V}[\overrightarrow{CD}]$
given in Definition \ref{def:VectorAddition} is independent
of the choice of the point $P$. This will be an easy task after we
introduce the three following results.
\begin{lem}
\label{lem:A22}Let $\overrightarrow{K_{1}P_{1}}$, $\overrightarrow{P_{1}L_{1}}$,
$\overrightarrow{K_{2}P_{2}}$, and $\overrightarrow{P_{2}L_{2}}$
be such that (as in Figure (\ref{fig:K2P2L2})) $\overrightarrow{K_{1}P_{1}}\;\Re\;\overrightarrow{K_{2}P_{2}}$
and $\overrightarrow{P_{1}L_{1}}\;\Re\;\overrightarrow{P_{2}L_{2}}$,
where $\{P_{1},\;K_{1},\;L_{1}\}$ and $\{P_{2},\;K_{2},\;L_{2}\}$
are disjoint sets. Then we have
\begin{equation}
||\overrightarrow{K_{1}L_{1}}||_{A}=||\overrightarrow{K_{2}L_{2}}||_{A}.\label{eq:B7}
\end{equation}
\end{lem}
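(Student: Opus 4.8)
The plan is to reduce the claim to a single squared-norm identity by writing each resultant arrow as a sum of its two constituent arrows and expanding through the bilinearity of the pre-inner product. Since $P_{1}$ is the head of $\overrightarrow{K_{1}P_{1}}$ and the tail of $\overrightarrow{P_{1}L_{1}}$, Definition \ref{def:ARROWSADDITIONHEADTAIL} gives $\overrightarrow{K_{1}L_{1}}=_{A}\overrightarrow{K_{1}P_{1}}+_{A}\overrightarrow{P_{1}L_{1}}$, and likewise $\overrightarrow{K_{2}L_{2}}=_{A}\overrightarrow{K_{2}P_{2}}+_{A}\overrightarrow{P_{2}L_{2}}$. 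Applying Definition \ref{def:MeasureOf-any-arrow} together with the bilinearity Equation (\ref{eq:AXIOMc}) and the symmetry Equation (\ref{eq:AXIOMb}), I would expand
\[
||\overrightarrow{K_{1}L_{1}}||_{A}^{2}=||\overrightarrow{K_{1}P_{1}}||_{A}^{2}+2\left<\overrightarrow{K_{1}P_{1}},\overrightarrow{P_{1}L_{1}}\right>_{A}+||\overrightarrow{P_{1}L_{1}}||_{A}^{2},
\]
and the analogous identity for $||\overrightarrow{K_{2}L_{2}}||_{A}^{2}$ with the subscript $2$ in place of $1$.

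The heart of the argument is a term-by-term comparison of these two expansions. The two outer terms match because the relation $\Re$ forces equal measures: from $\overrightarrow{K_{1}P_{1}}\;\Re\;\overrightarrow{K_{2}P_{2}}$ and Definition \ref{def:RelatioOfArrowOnline} one reads off $||\overrightarrow{K_{1}P_{1}}||_{A}=||\overrightarrow{K_{2}P_{2}}||_{A}$ (this holds in both clauses of the definition, since the degenerate clause produces two arrows of zero length by Lemma \ref{lem:||AB||=00003D0 IFF}), and likewise $||\overrightarrow{P_{1}L_{1}}||_{A}=||\overrightarrow{P_{2}L_{2}}||_{A}$ from the second hypothesis. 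The crucial middle term is precisely the situation governed by Axiom 4: taking $\overrightarrow{AB}=\overrightarrow{K_{1}P_{1}}$, $\overrightarrow{CD}=\overrightarrow{K_{2}P_{2}}$, $\overrightarrow{EF}=\overrightarrow{P_{1}L_{1}}$, and $\overrightarrow{GH}=\overrightarrow{P_{2}L_{2}}$, the hypotheses $\overrightarrow{K_{1}P_{1}}\;\Re\;\overrightarrow{K_{2}P_{2}}$ and $\overrightarrow{P_{1}L_{1}}\;\Re\;\overrightarrow{P_{2}L_{2}}$ are exactly those of Axiom 4, so Equation (\ref{eq:A5}) yields $\left<\overrightarrow{K_{1}P_{1}},\overrightarrow{P_{1}L_{1}}\right>_{A}=\left<\overrightarrow{K_{2}P_{2}},\overrightarrow{P_{2}L_{2}}\right>_{A}$. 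Summing the three matching terms gives $||\overrightarrow{K_{1}L_{1}}||_{A}^{2}=||\overrightarrow{K_{2}L_{2}}||_{A}^{2}$, and taking positive square roots produces Equation (\ref{eq:B7}).

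I do not expect a genuine obstacle; the one point demanding care is the recognition that the cross term $\left<\overrightarrow{K_{1}P_{1}},\overrightarrow{P_{1}L_{1}}\right>_{A}$ cannot be computed directly, since the two arrows need not be collinear and the line-based machinery of Section \ref{sec:4.Eq.Clss} is therefore unavailable, and that its invariance is exactly what Axiom 4 was postulated to supply. This is why the lemma belongs to the general arrow space of Section \ref{sec:6.An-Equivalence-Relation} rather than following from the earlier collinear results. I would also observe that the disjointness of $\{P_{1},K_{1},L_{1}\}$ and $\{P_{2},K_{2},L_{2}\}$ plays no role in this length computation; it is a bookkeeping hypothesis inherited from the intended application to the independence of vector addition from the base point, and the expansion above remains valid verbatim even when one of the constituent arrows is a zero arrow, in which case Proposition \ref{prop:AACDZERO} simply zeroes out the corresponding contributions.
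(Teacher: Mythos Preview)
Your proof is correct and follows essentially the same approach as the paper's own argument: expand each squared norm via bilinearity, match the two outer terms using the equal-measure clause of Definition \ref{def:RelatioOfArrowOnline}, and match the cross term via Axiom 4, then take positive square roots. Your additional remarks on the degenerate case and on the disjointness hypothesis being inessential to the length computation are accurate observations not made explicit in the paper.
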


\begin{figure}[ht]
\includegraphics[scale=0.5]{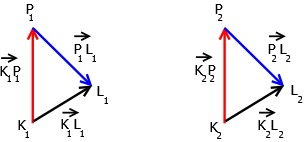}
\caption{\label{fig:K2P2L2}An illustration of Lemma \ref{lem:A22}. 
Note that the equivalence between arrows is color coded where equivalent arrows share the same color.}
\end{figure}

\begin{proof} 
Since $\overrightarrow{K_{1}P_{1}}\;\Re\;\overrightarrow{K_{2}P_{2}}$
and $\overrightarrow{P_{1}L_{1}}\;\Re\;\overrightarrow{P_{2}L_{2}}$,
it follows by Definition \ref{def:RelatioOfArrowOnline} that 
\[
||\overrightarrow{K_{1}P_{1}}||_{A}=||\overrightarrow{K_{2}P_{2}}||_{A}\,\,\text{and}\,\,
||\overrightarrow{P_{1}L_{1}}||_{A}=||\overrightarrow{P_{2}L_{2}}||_{A}.
\]
Squaring both sides of each Equation in the above and using  Definition \ref{def:MeasureOf-any-arrow} yields 
 
\begin{equation}
\left<\overrightarrow{K_{1}P_{1}},\overrightarrow{K_{1}P_{1}}\right>_{A}=\left<\overrightarrow{K_{2}P_{2}},\overrightarrow{K_{2}P_{2}}\right>_{A}\,\text{and}\, 
\left<\overrightarrow{P_{1}L_{1}},\overrightarrow{P_{1}L_{1}}\right>_{A}=\left<\overrightarrow{P_{2}L_{2}},\overrightarrow{P_{2}L_{2}}\right>_{A}
\label{eq:L4-1}
\end{equation}
Also, since $\overrightarrow{K_{1}P_{1}}\;\Re\;\overrightarrow{K_{2}P_{2}}$
and $\overrightarrow{P_{1}L_{1}}\;\Re\;\overrightarrow{P_{2}L_{2}}$
we have, by Axiom 4 and Equation (\ref{eq:AXIOMb}), that 
\begin{align}
\left<\overrightarrow{K_{1}P_{1}},\overrightarrow{P_{1}L_{1}}\right>_{A}=\left<\overrightarrow{K_{2}P_{2}},\overrightarrow{P_{2}L_{2}}\right>_{A}=\left<\overrightarrow{P_{2}L_{2}},\overrightarrow{K_{2}P_{2}}\right>_{A}.\label{eq:sqsq}
\end{align}
Now by Definitions \ref{def:ARROWSADDITIONHEADTAIL} and \ref{def:MeasureOf-any-arrow} and Equations (\ref{eq:L4-1}), (\ref{eq:sqsq}), and (\ref{eq:AXIOMc})  we have
\begin{align}
||\overrightarrow{K_{2}L_{2}}||_{A}^{2} & =
\left<\overrightarrow{K_{2}P_{2}}+_{A}\overrightarrow{P_{2}L_{2}},\overrightarrow{K_{2}P_{2}}+_{A}\overrightarrow{P_{2}L_{2}}\right>_{A}\nonumber\\
&=\left<\overrightarrow{K_{2}P_{2}},\overrightarrow{K_{2}P_{2}}\right>_{A}+2\left<\overrightarrow{K_{2}P_{2}},\overrightarrow{P_{2}L_{2}}\right>_{A}+\left<\overrightarrow{P_{2}L_{2}},\overrightarrow{P_{2}L_{2}}\right>_{A}\nonumber\\
&=\left<\overrightarrow{K_{1}P_{1}},\overrightarrow{K_{1}P_{1}}\right>_{A}
+2\left<\overrightarrow{K_{1}P_{1}},\overrightarrow{P_{1}L_{1}}\right>_{A}+\left<\overrightarrow{P_{1}L_{1}},\overrightarrow{P_{1}L_{1}}\right>_{A}\nonumber\\
&=\left<\overrightarrow{K_1P_1}+_{A}\overrightarrow{P_1L_1}, \overrightarrow{K_1P_1}+_{A}\overrightarrow{P_1L_1}\right>
= ||\overrightarrow{K_{1}L_{1}}||_{A}^{2}.\label{eq:L10-1}
\end{align}
Taking the positive square root of both sides of Equation (\ref{eq:L10-1})
gives the desired result. 
\end{proof}
\begin{thm}
\label{thm:WAS AXIOM (7)} Let $\overrightarrow{K_{1}P_{1}}$,
$\overrightarrow{P_{1}L_{1}}$, $\overrightarrow{P_{2}L_{2}}$, and
$\overrightarrow{K_{2}P_{2}}$ be such that (see Figure (\ref{fig:K2P2L2}))
$\overrightarrow{K_{1}P_{1}}\;\Re\;\overrightarrow{K_{2}P_{2}}$ and
$\overrightarrow{P_{1}L_{1}}\;\Re\;\overrightarrow{P_{2}L_{2}}$,
where $\{P_{1},\;K_{1},\;L_{1}\}$ and $\{P_{2},\;K_{2},\;L_{2}\}$
are disjoint sets. Then 
\begin{equation}
\frac{\left<\overrightarrow{K_{1}L_{1}},\overrightarrow{K_{2}L_{2}}\right>_{A}}{||\overrightarrow{K_{1}L_{1}}||_{A}||\overrightarrow{K_{2}L_{2}}||_{A}}=1.\label{eq:A7}
\end{equation}
\end{thm}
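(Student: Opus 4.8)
The plan is to compute the numerator $\left<\overrightarrow{K_{1}L_{1}},\overrightarrow{K_{2}L_{2}}\right>_{A}$ directly and show that it equals $||\overrightarrow{K_{1}L_{1}}||_{A}^{2}$; the stated ratio then drops out of Lemma \ref{lem:A22}. First I would write $\overrightarrow{K_{1}L_{1}}=_{A}\overrightarrow{K_{1}P_{1}}+_{A}\overrightarrow{P_{1}L_{1}}$ and $\overrightarrow{K_{2}L_{2}}=_{A}\overrightarrow{K_{2}P_{2}}+_{A}\overrightarrow{P_{2}L_{2}}$ (both additions are legitimate since the head of the first summand equals the tail of the second), and expand the pre-inner product by the bilinearity relation (\ref{eq:AXIOMc}) into the four cross terms
\[
\left<\overrightarrow{K_{1}P_{1}},\overrightarrow{K_{2}P_{2}}\right>_{A}+\left<\overrightarrow{K_{1}P_{1}},\overrightarrow{P_{2}L_{2}}\right>_{A}+\left<\overrightarrow{P_{1}L_{1}},\overrightarrow{K_{2}P_{2}}\right>_{A}+\left<\overrightarrow{P_{1}L_{1}},\overrightarrow{P_{2}L_{2}}\right>_{A}.
\]

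The heart of the argument is to collapse each of these four terms onto the subscript-$1$ arrows using Axiom 4 (Equation (\ref{eq:A5})). Since $\Re$ is an equivalence relation (Theorem \ref{thm:Equvs.Relan.of.ArrowONline}), it is reflexive and symmetric, so in particular $\overrightarrow{K_{2}P_{2}}\;\Re\;\overrightarrow{K_{1}P_{1}}$, $\overrightarrow{P_{2}L_{2}}\;\Re\;\overrightarrow{P_{1}L_{1}}$, $\overrightarrow{K_{1}P_{1}}\;\Re\;\overrightarrow{K_{1}P_{1}}$, and $\overrightarrow{P_{1}L_{1}}\;\Re\;\overrightarrow{P_{1}L_{1}}$. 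Applying Axiom 4 with a reflexive relation in the slot I wish to leave fixed and the given (or symmetric) relation in the slot I wish to move, each term rewrites cleanly: for instance $\left<\overrightarrow{K_{1}P_{1}},\overrightarrow{K_{2}P_{2}}\right>_{A}=\left<\overrightarrow{K_{1}P_{1}},\overrightarrow{K_{1}P_{1}}\right>_{A}$ (holding the first argument by $\overrightarrow{K_{1}P_{1}}\;\Re\;\overrightarrow{K_{1}P_{1}}$ while transporting the second by $\overrightarrow{K_{2}P_{2}}\;\Re\;\overrightarrow{K_{1}P_{1}}$), and likewise $\left<\overrightarrow{K_{1}P_{1}},\overrightarrow{P_{2}L_{2}}\right>_{A}=\left<\overrightarrow{K_{1}P_{1}},\overrightarrow{P_{1}L_{1}}\right>_{A}$, $\left<\overrightarrow{P_{1}L_{1}},\overrightarrow{K_{2}P_{2}}\right>_{A}=\left<\overrightarrow{P_{1}L_{1}},\overrightarrow{K_{1}P_{1}}\right>_{A}$, and $\left<\overrightarrow{P_{1}L_{1}},\overrightarrow{P_{2}L_{2}}\right>_{A}=\left<\overrightarrow{P_{1}L_{1}},\overrightarrow{P_{1}L_{1}}\right>_{A}$.

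Summing these and using the symmetry (\ref{eq:AXIOMb}) to merge the two equal middle terms yields
\[
\left<\overrightarrow{K_{1}L_{1}},\overrightarrow{K_{2}L_{2}}\right>_{A}=\left<\overrightarrow{K_{1}P_{1}},\overrightarrow{K_{1}P_{1}}\right>_{A}+2\left<\overrightarrow{K_{1}P_{1}},\overrightarrow{P_{1}L_{1}}\right>_{A}+\left<\overrightarrow{P_{1}L_{1}},\overrightarrow{P_{1}L_{1}}\right>_{A},
\]
which is precisely the expansion of $\left<\overrightarrow{K_{1}P_{1}}+_{A}\overrightarrow{P_{1}L_{1}},\overrightarrow{K_{1}P_{1}}+_{A}\overrightarrow{P_{1}L_{1}}\right>_{A}=\left<\overrightarrow{K_{1}L_{1}},\overrightarrow{K_{1}L_{1}}\right>_{A}=||\overrightarrow{K_{1}L_{1}}||_{A}^{2}$. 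Finally, Lemma \ref{lem:A22} gives $||\overrightarrow{K_{1}L_{1}}||_{A}=||\overrightarrow{K_{2}L_{2}}||_{A}$, so $||\overrightarrow{K_{1}L_{1}}||_{A}^{2}=||\overrightarrow{K_{1}L_{1}}||_{A}\,||\overrightarrow{K_{2}L_{2}}||_{A}$ and the ratio in (\ref{eq:A7}) equals $1$. These norms are nonzero by Lemma \ref{lem:||AB||=00003D0 IFF} once $\overrightarrow{K_{1}L_{1}}$ is nontrivial, and Lemma \ref{lem:A22} then forces $\overrightarrow{K_{2}L_{2}}$ to be nontrivial as well; the degenerate zero-arrow case is excluded by the very appearance of these norms in the denominator of (\ref{eq:A7}).

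I expect the main obstacle to be purely bookkeeping: correctly choosing, for each of the four cross terms, which $\Re$-relation (reflexive versus given or symmetric) is fed into which of the two slots of Axiom 4, so that the term collapses to the intended subscript-$1$ quantity. Once that correspondence is fixed, the remainder is the same bilinear expansion already carried out in the proof of Lemma \ref{lem:A22}.
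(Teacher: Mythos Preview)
Your proposal is correct and follows essentially the same route as the paper: expand $\left<\overrightarrow{K_{1}L_{1}},\overrightarrow{K_{2}L_{2}}\right>_{A}$ by bilinearity into four cross terms, use Axiom~4 (with reflexivity of $\Re$ in the fixed slot) to replace each subscript-$2$ arrow by its subscript-$1$ partner, recombine into $||\overrightarrow{K_{1}L_{1}}||_{A}^{2}$, and finish with Lemma~\ref{lem:A22}. Your write-up is in fact slightly more explicit than the paper's about which $\Re$-relations feed which slot of Axiom~4, and about the nondegeneracy of the denominator.
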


\begin{proof}
Since $\overrightarrow{K_{1}P_{1}}\;\Re\;\overrightarrow{K_{2}P_{2}}$, it follows 
by Definition \ref{def:RelatioOfArrowOnline} that
\begin{equation}
\frac{\left<\overrightarrow{K_{1}P_{1}},\overrightarrow{K_{2}P_{2}}\right>_{A}}{||\overrightarrow{K_{1}P_{1}}||_{A}||\overrightarrow{K_{2}P_{2}}||_{A}}=1,
\qquad
||\overrightarrow{K_{1}P_{1}}||_{A}=||\overrightarrow{K_{2}P_{2}}||_{A}.\label{eq:EXP}
\end{equation}
The two equations above together with Definition \ref{def:MeasureOf-any-arrow} imply that  
\begin{align}
\left<\overrightarrow{K_{1}P_{1}},\overrightarrow{K_{2}P_{2}}\right>_{A}=||\overrightarrow{K_{1}P_{1}}||_{A}^{2}=\left<\overrightarrow{K_{1}P_{1}},\overrightarrow{K_{1}P_{1}}\right>_{A}.\label{eq:km}
\end{align}
Similarly, since $\overrightarrow{P_{1}L_{1}}\;\Re\;\overrightarrow{P_{2}L_{2}}$, we can get
\begin{equation}
\left<\overrightarrow{P_{1}L_{1}},\overrightarrow{P_{2}L_{2}}\right>_{A}=\left<\overrightarrow{P_{1}L_{1}},\overrightarrow{P_{1}L_{1}}\right>_{A}.\label{eq:weex}
\end{equation}
Notice that Equation (\ref{eq:A7}) is equivalent to
\begin{equation}
\left<\overrightarrow{K_{1}L_{1}},\overrightarrow{K_{2}L_{2}}\right>_{A}=||\overrightarrow{K_{1}L_{1}}||_{A}||\overrightarrow{K_{2}L_{2}}||_{A}.\label{eq:K1L1-0-1}
\end{equation}
Also, we have by Lemma \ref{lem:A22} that $||\overrightarrow{K_{1}L_{1}}||_{A}=||\overrightarrow{K_{2}L_{2}}||_{A}$.
Hence, using this and Definition \ref{def:MeasureOf-any-arrow}, Equation (\ref{eq:K1L1-0-1}) becomes
\begin{equation}
\left<\overrightarrow{K_{1}L_{1}},\overrightarrow{K_{2}L_{2}}\right>_{A}=||\overrightarrow{K_{1}L_{1}}||_{A}^{2}=\left<\overrightarrow{K_{1}L_{1}},\overrightarrow{K_{1}L_{1}}\right>_{A}.\label{eq:K1L1-1-1}
\end{equation}
Thus to prove this theorem it is enough to show that Equation (\ref{eq:K1L1-1-1}) holds.
Now by Definition \ref{def:MeasureOf-any-arrow}  and Equation (\ref{eq:AXIOMc})  we have

\begin{align}
\left<\overrightarrow{K_{1}L_{1}},\overrightarrow{K_{2}L_{2}}\right>_{A} &=\left<\overrightarrow{K_{1}P_{1}}+_{A}\overrightarrow{P_{1}L_{1}},\overrightarrow{K_{2}P_{2}}+_{A}\overrightarrow{P_{2}L_{2}}\right>_{A}\nonumber\\
&=\left<\overrightarrow{K_{1}P_{1}},\overrightarrow{K_{2}P_{2}}\right>_{A}
+2\left<\overrightarrow{K_{1}P_{1}},\overrightarrow{P_{2}L_{2}}\right>_{A}
+\left<\overrightarrow{P_{1}L_{1}},\overrightarrow{P_{2}L_{2}}\right>_{A}\nonumber\\
&=\left<\overrightarrow{K_{1}P_{1}},\overrightarrow{K_{1}P_{1}}\right>_{A}
+2\left<\overrightarrow{K_{1}P_{1}},\overrightarrow{P_{1}L_{1}}\right>_{A}
+\left<\overrightarrow{P_{1}L_{1}},\overrightarrow{P_{1}L_{1}}\right>_{A}\label{eq:Kgsr},
\end{align}
where we used three applications of Axiom 4 to obtain the final equality.
Applications of (\ref{eq:AXIOMc}) and Definition \ref{def:ARROWSADDITIONHEADTAIL}, to the preceding equation yields
\[
\left<\overrightarrow{K_{1}L_{1}},\overrightarrow{K_{2}L_{2}}\right>_{A}=\left<\overrightarrow{K_{1}L_{1}},\overrightarrow{K_{1}L_{1}}\right>_{A}.
\]
This is exactly the Equation (\ref{eq:K1L1-1-1}) which ends the proof of this theorem.
\end{proof}
\begin{rem}
\label{rem:R1}In the Equations (\ref{eq:B7}) and (\ref{eq:A7})
above, it is important to mention that the ruling of writing these
equation is, for example, for the two arrows $\overrightarrow{K_{1}P_{1}}$,
$\overrightarrow{P_{1}L_{1}}$, the corresponding arrow is to be written
by taking the tail of the arrow whose head is the common point, and
the head to be chosen as the head of the arrow whose tail is the common
point. In this case the resultant arrow is $\overrightarrow{K_{1}L_{1}}$.
Similarly, for $\overrightarrow{P_{2}L_{2}}$, and $\overrightarrow{K_{2}P_{2}}$, 
the resultant arrow is $\overrightarrow{K_{2}L_{2}}$\label{rem:remaark49}.
\end{rem}

\begin{cor}
\label{cor:COR I,2}Given $\overrightarrow{K_{1}P_{1}}$,
$\overrightarrow{P_{1}L_{1}}$, $\overrightarrow{K_{2}P_{2}}$, and
$\overrightarrow{P_{2}L_{2}}$ in an arrow space $\mathcal{P}_{A}$
where $\overrightarrow{K_{1}P_{1}}\;\Re\;\overrightarrow{K_{2}P_{2}},\;and\;\overrightarrow{P_{1}L_{1}}\;\Re\;\overrightarrow{P_{2}L_{2}}$,
and where $\{P_{1},\;K_{1},\;L_{1}\}\cap \{P_{2},\;K_{2},\;L_{2}\} = \emptyset$, then $\overrightarrow{K_{1}L_{1}}\;\Re\;\overrightarrow{K_{2}L_{2}}$.
\end{cor}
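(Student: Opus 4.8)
The plan is to read off the two defining conditions of the relation $\Re$ directly from the two results just proved. By Definition \ref{def:RelatioOfArrowOnline}, establishing $\overrightarrow{K_{1}L_{1}}\;\Re\;\overrightarrow{K_{2}L_{2}}$ amounts to verifying either that both $\overrightarrow{K_{1}L_{1}}$ and $\overrightarrow{K_{2}L_{2}}$ are zero arrows, or that $||\overrightarrow{K_{1}L_{1}}||_{A}=||\overrightarrow{K_{2}L_{2}}||_{A}$ together with $\left<\frac{\overrightarrow{K_{1}L_{1}}}{||\overrightarrow{K_{1}L_{1}}||_{A}},\frac{\overrightarrow{K_{2}L_{2}}}{||\overrightarrow{K_{2}L_{2}}||_{A}}\right>_{A}=1$. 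I would therefore split the argument according to whether $K_{1}=_{P}L_{1}$ or $K_{1}\neq_{P}L_{1}$, using Lemma \ref{lem:A22} to transfer the degeneracy from one resultant arrow to the other.

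First I would dispose of the degenerate case $K_{1}=_{P}L_{1}$. Here $||\overrightarrow{K_{1}L_{1}}||_{A}=0$, and Lemma \ref{lem:A22} forces $||\overrightarrow{K_{2}L_{2}}||_{A}=0$; Lemma \ref{lem:||AB||=00003D0 IFF} then yields $K_{2}=_{P}L_{2}$. Thus both resultant arrows are zero arrows, and the first clause of Definition \ref{def:RelatioOfArrowOnline} gives $\overrightarrow{K_{1}L_{1}}\;\Re\;\overrightarrow{K_{2}L_{2}}$ at once.

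In the remaining case $K_{1}\neq_{P}L_{1}$, Lemma \ref{lem:||AB||=00003D0 IFF} gives $||\overrightarrow{K_{1}L_{1}}||_{A}\neq0$, and Lemma \ref{lem:A22} simultaneously supplies the first required equality $||\overrightarrow{K_{1}L_{1}}||_{A}=||\overrightarrow{K_{2}L_{2}}||_{A}$ and guarantees $||\overrightarrow{K_{2}L_{2}}||_{A}\neq0$ (so that $K_{2}\neq_{P}L_{2}$ and the normalized arrows are well defined). With both resultant arrows nonzero, Theorem \ref{thm:WAS AXIOM (7)} gives $\frac{\left<\overrightarrow{K_{1}L_{1}},\overrightarrow{K_{2}L_{2}}\right>_{A}}{||\overrightarrow{K_{1}L_{1}}||_{A}||\overrightarrow{K_{2}L_{2}}||_{A}}=1$, which after rewriting by the scalar-multiplication linearity of the pre-inner product (Equation (\ref{eq:A01})) is exactly the second required condition $\left<\frac{\overrightarrow{K_{1}L_{1}}}{||\overrightarrow{K_{1}L_{1}}||_{A}},\frac{\overrightarrow{K_{2}L_{2}}}{||\overrightarrow{K_{2}L_{2}}||_{A}}\right>_{A}=1$. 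Both clauses of Definition \ref{def:RelatioOfArrowOnline} now hold, finishing this case.

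Since the two cited results were proved precisely under the disjointness hypothesis $\{P_{1},K_{1},L_{1}\}\cap\{P_{2},K_{2},L_{2}\}=\emptyset$ assumed here, there is no real obstacle: the corollary is essentially a bookkeeping assembly of Lemma \ref{lem:A22} and Theorem \ref{thm:WAS AXIOM (7)}. The one point I would be careful to make explicit is the degenerate case, where Theorem \ref{thm:WAS AXIOM (7)} cannot be invoked directly because its statement divides by $||\overrightarrow{K_{1}L_{1}}||_{A}$; Lemma \ref{lem:A22} is exactly the tool that salvages this case by propagating the vanishing of the length across the equivalence.
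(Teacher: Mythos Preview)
Your proof is correct and follows essentially the same approach as the paper: invoke Lemma \ref{lem:A22} for the equality of lengths and Theorem \ref{thm:WAS AXIOM (7)} for the normalized pre-inner product condition, then appeal to Definition \ref{def:RelatioOfArrowOnline}. Your explicit treatment of the degenerate case $K_{1}=_{P}L_{1}$ is a small improvement in rigor over the paper's version, which applies Theorem \ref{thm:WAS AXIOM (7)} without separately addressing the possibility that the denominator vanishes.
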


\begin{proof}
By Lemma \ref{lem:A22} we have 
\begin{equation}
||\overrightarrow{K_{1}L_{1}}||_{A}=||\overrightarrow{K_{2}L_{2}}||_{A}.\label{eq:c1}
\end{equation}
Also, by Theorem \ref{thm:WAS AXIOM (7)} we have 
\begin{equation}
\frac{\left<\overrightarrow{K_{1}L_{1}},\overrightarrow{K_{2}L_{2}}\right>_{A}}{||\overrightarrow{K_{1}L_{1}}||_{A}||\overrightarrow{K_{2}L_{2}}||_{A}}=1.\label{eq:c2}
\end{equation}
By means of Definition \ref{def:RelatioOfArrowOnline}, Equations (\ref{eq:c1}), (\ref{eq:c2}) imply that $\overrightarrow{K_{1}L_{1}}\;\Re\;\overrightarrow{K_{2}L_{2}}$,
as desired.
\end{proof}
Next we use Corollary \ref{cor:COR I,2} to prove that the addition
of vectors in Definition \ref{def:VectorAddition} is independent
from the choice of the point $P$.
\begin{thm}
\label{thm:ForAddition.Independnc.}Given any two equivalence classes
$[\overrightarrow{AB}],\;[\overrightarrow{CD}]\in\mathcal{P}_{v}$,
the addition, $[\overrightarrow{AB}]+_{V}[\overrightarrow{CD}]$ as
given in Definition \ref{def:VectorAddition} is independent
of the choice of the point $P$.
\begin{figure}[ht]\label{fig:Independent}
\includegraphics[scale=0.4]{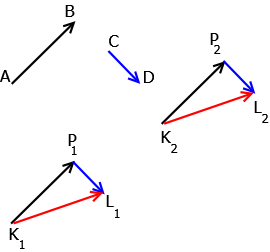}
\caption{Independent Vector Addition.  Note that the equivalence between arrows is color coded where equivalent arrows share the same color.}
\end{figure}
\end{thm}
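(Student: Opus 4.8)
The plan is to fix two arbitrary base points $P_1$ and $P_2$ and to show that the two equivalence classes produced by Definition \ref{def:VectorAddition} coincide. Applying Axiom 5 at $P_1$ yields unique points $K_1,\;L_1$ with $\overrightarrow{K_1P_1}\;\Re\;\overrightarrow{AB}$ and $\overrightarrow{P_1L_1}\;\Re\;\overrightarrow{CD}$, so that $[\overrightarrow{AB}]+_V[\overrightarrow{CD}]=_V[\overrightarrow{K_1L_1}]$; applying it at $P_2$ yields $K_2,\;L_2$ with $\overrightarrow{K_2P_2}\;\Re\;\overrightarrow{AB}$ and $\overrightarrow{P_2L_2}\;\Re\;\overrightarrow{CD}$, giving the class $[\overrightarrow{K_2L_2}]$. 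Since two arrows represent the same vector precisely when they are $\Re$-related (Definition \ref{def: (A Vector)} together with Theorem \ref{thm:Equvs.Relan.of.ArrowONline}), it suffices to prove $\overrightarrow{K_1L_1}\;\Re\;\overrightarrow{K_2L_2}$.

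First I would exploit that $\Re$ is an equivalence relation (Theorem \ref{thm:Equvs.Relan.of.ArrowONline}). Because $\overrightarrow{K_1P_1}$ and $\overrightarrow{K_2P_2}$ are both related to $\overrightarrow{AB}$, symmetry and transitivity give $\overrightarrow{K_1P_1}\;\Re\;\overrightarrow{K_2P_2}$; in the same way $\overrightarrow{P_1L_1}\;\Re\;\overrightarrow{P_2L_2}$. These are exactly the hypotheses on the \emph{component} arrows demanded by Corollary \ref{cor:COR I,2}, whose conclusion is precisely $\overrightarrow{K_1L_1}\;\Re\;\overrightarrow{K_2L_2}$.

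The main obstacle is that Corollary \ref{cor:COR I,2} carries the extra requirement that the point sets $\{P_1,\;K_1,\;L_1\}$ and $\{P_2,\;K_2,\;L_2\}$ be disjoint, a condition that an arbitrary pair $P_1,\;P_2$ need not satisfy, since the two constructions may share a vertex. When the sets happen to be disjoint the corollary applies directly. To handle the remaining case I would invoke the standing assumption of this section that $\mathcal{P}$ is infinite to select a fresh base point $P_3$ whose induced points $K_3,\;L_3$, obtained by applying Axiom 5 to $\overrightarrow{AB}$ and to $\overrightarrow{CD}$ at $P_3$, produce a set $\{P_3,\;K_3,\;L_3\}$ disjoint from both $\{P_1,\;K_1,\;L_1\}$ and $\{P_2,\;K_2,\;L_2\}$. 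As above, transitivity gives $\overrightarrow{K_1P_1}\;\Re\;\overrightarrow{K_3P_3}$, $\overrightarrow{P_1L_1}\;\Re\;\overrightarrow{P_3L_3}$, and likewise for the index pair $2,3$. Applying Corollary \ref{cor:COR I,2} twice then yields $\overrightarrow{K_1L_1}\;\Re\;\overrightarrow{K_3L_3}$ and $\overrightarrow{K_3L_3}\;\Re\;\overrightarrow{K_2L_2}$, and a final appeal to transitivity of $\Re$ delivers $\overrightarrow{K_1L_1}\;\Re\;\overrightarrow{K_2L_2}$. Hence $[\overrightarrow{K_1L_1}]=_V[\overrightarrow{K_2L_2}]$, so the sum is independent of the choice of $P$.
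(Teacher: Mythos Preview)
Your argument is essentially the same as the paper's: pick two base points, use Axiom~5 to produce the arrows $\overrightarrow{K_iP_i}$ and $\overrightarrow{P_iL_i}$, use transitivity of $\Re$ to get $\overrightarrow{K_1P_1}\;\Re\;\overrightarrow{K_2P_2}$ and $\overrightarrow{P_1L_1}\;\Re\;\overrightarrow{P_2L_2}$, and then invoke Corollary~\ref{cor:COR I,2} to conclude $\overrightarrow{K_1L_1}\;\Re\;\overrightarrow{K_2L_2}$.

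The one place you diverge is that you take the disjointness hypothesis of Corollary~\ref{cor:COR I,2} seriously and route through an auxiliary base point $P_3$ when $\{P_1,K_1,L_1\}$ and $\{P_2,K_2,L_2\}$ overlap; the paper's proof simply applies the corollary directly without addressing this side condition. Your extra step is a genuine refinement rather than a different method. To make it airtight you should say explicitly why such a $P_3$ exists: by Axiom~5 the assignments $P_3\mapsto K_3$ and $P_3\mapsto L_3$ are bijections of $\mathcal{P}$, so each of the finitely many forbidden points rules out only finitely many choices of $P_3$, and the infiniteness of $\mathcal{P}$ then guarantees an admissible $P_3$.
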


\begin{proof}
Let $[\overrightarrow{AB}],\;[\overrightarrow{CD}]\in\mathcal{P}_{v}$ and
let $P_{1},\;P_{2}$ be any two distinct points in $\mathcal{P}$.
Using Axiom 5 let $\overrightarrow{K_{1}P_{1}}$ and $\overrightarrow{P_{1}L_{1}}$
be such that $\overrightarrow{K_{1}P_{1}}\;\Re\;\overrightarrow{AB}$
and $\overrightarrow{P_{1}L_{1}}\;\Re\;\overrightarrow{CD}$; and
also let $\overrightarrow{K_{2}P_{2}}$ and $\overrightarrow{P_{2}L_{2}}$
be such that $\overrightarrow{K_{2}P_{2}}\;\Re\;\overrightarrow{AB}$
and $\overrightarrow{P_{2}L_{2}}\;\Re\;\overrightarrow{CD}$ (see
Figure 6.5). Then since the relation $\Re$
is transitive, it follows that
\begin{equation}
\overrightarrow{K_{1}P_{1}}\;\Re\;\overrightarrow{K_{2}P_{2}},\;\,\text{and}\;\,\overrightarrow{P_{1}L_{1}}\;\Re\;\overrightarrow{P_{2}L_{2}}.\label{eq:T0}
\end{equation}
 To show that Definition \ref{def:VectorAddition} is independent
of the choice of any point, it is enough to show that $\overrightarrow{K_{1}L_{1}}\;\Re\;\overrightarrow{K_{2}L_{2}}$. This
is an immediate result of Remark \ref{rem:remaark49} and Corollary \ref{cor:COR I,2}.
\end{proof}

\medskip
Now we define vector scalar multiplication
\begin{defn}
\label{def:Vector-Scalar Multiplication}Let $t\in\mathbb{R}$ and
$u=[\overrightarrow{AB}]$ be any vector, where $\overrightarrow{AB}$
is some representative of an equivalence class. We define the scalar
multiplication 
\[
t\;u=t\;[\overrightarrow{AB}]:=_{V}[(t)\;\overrightarrow{AB}].
\]
\end{defn}

In order to show that
product $t\;u$ in Definition \ref{def:Vector-Scalar Multiplication}
is independent of the choice of the arrow $\overrightarrow{AB}$, we need the following lemma.

\begin{lem}
\label{lem:AB R CD -> tAB R tCD}For any two arrows $\overrightarrow{AB},\;\overrightarrow{CD}$
and any $t\in\mathbb{R}$ if $\overrightarrow{AB}\;\Re\;\overrightarrow{CD}$,
then $(t)\;\overrightarrow{AB}\;\Re\;(t)\;\overrightarrow{CD}$.
\end{lem}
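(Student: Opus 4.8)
The plan is to prove the lemma by unwinding Definition \ref{def:RelatioOfArrowOnline} and splitting on the degenerate cases before handling the generic one. First I would dispose of the trivial situations. If $t=0$, then Definition \ref{def:(Scalar-Multiplication-of} gives $(t)\;\overrightarrow{AB}=_{A}\overrightarrow{AA}$ and $(t)\;\overrightarrow{CD}=_{A}\overrightarrow{CC}$; since these are both ``zero'' arrows, the first clause of Definition \ref{def:RelatioOfArrowOnline} yields $\overrightarrow{AA}\;\Re\;\overrightarrow{CC}$ immediately. Similarly, if $A=_{P}B$, then the hypothesis $\overrightarrow{AB}\;\Re\;\overrightarrow{CD}$ forces $C=_{P}D$ (the second clause of Definition \ref{def:RelatioOfArrowOnline} requires nonzero measures, hence $A\neq_{P}B$); then again $(t)\;\overrightarrow{AB}=_{A}\overrightarrow{AA}$ and $(t)\;\overrightarrow{CD}=_{A}\overrightarrow{CC}$ are related by the first clause. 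This reduces everything to the case $A\neq_{P}B$ and $t\neq0$, in which case the hypothesis also gives $C\neq_{P}D$, so all four measures below are nonzero.

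In the generic case I would verify the two defining conditions of $\Re$ for the pair $(t)\;\overrightarrow{AB}$, $(t)\;\overrightarrow{CD}$. For the measure condition, Lemma \ref{lem:tAB} gives
\[
||(t)\;\overrightarrow{AB}||_{A}=|t|\;||\overrightarrow{AB}||_{A}=|t|\;||\overrightarrow{CD}||_{A}=||(t)\;\overrightarrow{CD}||_{A},
\]
where the middle equality is the measure part of $\overrightarrow{AB}\;\Re\;\overrightarrow{CD}$. For the direction condition I would compute the normalized pre-inner product using Equation (\ref{eq:A01}) in the numerator and Lemma \ref{lem:tAB} in the denominator:
\[
\left<\frac{(t)\;\overrightarrow{AB}}{||(t)\;\overrightarrow{AB}||_{A}},\frac{(t)\;\overrightarrow{CD}}{||(t)\;\overrightarrow{CD}||_{A}}\right>_{A}
=\frac{t^{2}\left<\overrightarrow{AB},\overrightarrow{CD}\right>_{A}}{|t|\,||\overrightarrow{AB}||_{A}\cdot|t|\,||\overrightarrow{CD}||_{A}}
=\frac{\left<\overrightarrow{AB},\overrightarrow{CD}\right>_{A}}{||\overrightarrow{AB}||_{A}\,||\overrightarrow{CD}||_{A}}=1,
\]
the final equality again being the direction part of the hypothesis $\overrightarrow{AB}\;\Re\;\overrightarrow{CD}$. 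With both conditions of Definition \ref{def:RelatioOfArrowOnline} confirmed, I would conclude $(t)\;\overrightarrow{AB}\;\Re\;(t)\;\overrightarrow{CD}$.

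There is no real obstacle here; the lemma is essentially bookkeeping. The one point worth emphasizing, and the conceptual crux, is that the scalar enters the numerator as $t^{2}$ (via Equation (\ref{eq:A01})) and the denominator as $|t|\cdot|t|=t^{2}$, so the factor $t^{2}>0$ cancels without ever flipping the sign of the normalized pre-inner product. In particular, even when $t<0$ reverses the direction of each arrow, the two reversals offset one another and the value $1$ is preserved; this is precisely why $\Re$ (which demands direction $+1$, not $\pm1$) is respected by scalar multiplication. Care must only be taken to invoke the case split above so that the normalizations are legitimate, i.e. so that no measure in a denominator is zero.
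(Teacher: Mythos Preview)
Your proof is correct and follows essentially the same approach as the paper: dispose of the degenerate cases ($A=_{P}B$ or $t=0$) via the first clause of Definition \ref{def:RelatioOfArrowOnline}, then in the generic case use Lemma \ref{lem:tAB} for the measure equality and Equation (\ref{eq:A01}) for the direction condition, with the $t^{2}/|t|^{2}$ cancellation yielding the value $1$. The only cosmetic difference is the order in which you treat the trivial cases.
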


\begin{proof}
If $A=_{P}B$, then since $\overrightarrow{AB}\;\Re\;\overrightarrow{CD}$,
we would have by Proposition \ref{prop:A=00003DB,C=00003DD} that
$C=_{P}D$. Then for any $t\in\mathbb{R}$ we have by Definition \ref{def:(Scalar-Multiplication-of}
that 
\[
(t)\;\overrightarrow{AB}=_{A}(t)\;\overrightarrow{AA}=_{A}\overrightarrow{AA},\;\,
\text{and}\,\;(t)\;\overrightarrow{CD}=_{A}(t)\;\overrightarrow{CC}=_{A}\overrightarrow{CC}.
\]
Thus, since we have by Definition \ref{def:RelatioOfArrowOnline}
that $\overrightarrow{AA}\;\Re\;\overrightarrow{CC}$, we conclude
that $(t)\;\overrightarrow{AB}\;\Re\;(t)\;\overrightarrow{CD}$. Now let
$\overrightarrow{AB},\;\overrightarrow{CD}\in\mathcal{P}_{A}$, with
$A\neq_{P}B,\;C\neq_{P}D$, such that $\overrightarrow{AB}\;\Re\;\overrightarrow{CD}$.
If $t=0$, the if follows by Definition \ref{def:(Scalar-Multiplication-of}
that 
\[
(t)\;\overrightarrow{AB}=_{A}(0)\;\overrightarrow{AB}=_{A}\overrightarrow{AA},\;\,
\text{and}\,\;(t)\;\overrightarrow{CD}=_{A}(0)\;\overrightarrow{CD}=_{A}\overrightarrow{CC}.
\]
Thus, again we have by Definition \ref{def:RelatioOfArrowOnline}
that $\overrightarrow{AA}\;\Re\;\overrightarrow{CC}$ and we conclude
that $(t)\;\overrightarrow{AB}\;\Re\;(t)\;\overrightarrow{CD}$. We now
consider that $\overrightarrow{AB},\;\overrightarrow{CD}\in\mathcal{P}_{A}$,
with $A\neq_{P}B,\;C\neq_{P}D$, such that $\overrightarrow{AB}\;\Re\;\overrightarrow{CD}$
and $t\neq0$. By Definition \ref{def:RelatioOfArrowOnline}, we know that 
\begin{equation}
||\overrightarrow{AB}||_{A}=||\overrightarrow{CD}||_{A}\,\,\text{and}\,\,
\left<\frac{\overrightarrow{AB}}{||\overrightarrow{AB}||_{A}},\;\frac{\overrightarrow{CD}}{||\overrightarrow{CD}||_{A}}\right>_{A}=1.\label{eq:<ab,cd>=00003D1-1}
\end{equation}
By Lemma \ref{lem:tAB} and (\ref{eq:<ab,cd>=00003D1-1}) we have
\begin{equation}
||(t)\;\overrightarrow{AB}||_{A}=|t|\;||\overrightarrow{AB}||_{A}=|t|\;||\overrightarrow{CD}||_{A}=||(t)\;\overrightarrow{CD}||_{A}.\label{eq:closee}
\end{equation}
Also, by Lemma \ref{lem:tAB}, (\ref{eq:A01}), and Equation (\ref{eq:<ab,cd>=00003D1-1}) we have 
\begin{equation}
\left<\frac{(t)\;\overrightarrow{AB}}{||(t)\;\overrightarrow{AB}||_{A}},\;\frac{(t)\;\overrightarrow{CD}}{||(t)\;\overrightarrow{CD}||_{A}}\right>_{A}
=\left<\frac{(t)\;\overrightarrow{AB}}{|t|\;||\overrightarrow{AB}||_{A}},\;\frac{(t)\;\overrightarrow{CD}}{|t|\;||\overrightarrow{CD}||_{A}}\right>_{A}=1.\label{eq:laj}
\end{equation}
Combining 
(\ref{eq:closee}) and (\ref{eq:laj}) with Definition \ref{def:RelatioOfArrowOnline} shows
that
 \[(t)\;\overrightarrow{AB}\;\Re\;(t)\;\overrightarrow{CD}.\]
\end{proof}

\section{\label{sec:7.The-Equivalence-C}The Equivalence Classes Of arrows
as a Vector space}

In this section we show that the set $\mathcal{P}_{v}$ of
all equivalence classes of arrows with the two operations, addition
and scalar multiplication, from Definitions \ref{def:VectorAddition}
and \ref{def:Vector-Scalar Multiplication} satisfy all the axioms
of vector space. Definitions \ref{def:VectorAddition}
and \ref{def:Vector-Scalar Multiplication} imply that $\mathcal{P}_{v}$
is closed under these two operations, namely that adding two equivalence 
classes and multiplying a scalar in any
equivalence class is again an equivalence class. We now prove the
remaining eight axioms of a vector space. We start by showing that the
addition $+_{V}$ is commutative.
\begin{thm}
\label{thm:poi}The addition $+_{V}$ is commutative, that is for any $[\overrightarrow{AB}]$ and $[\overrightarrow{CD}]$
in $\mathcal{P}_{v}$ we have

\begin{equation}
[\overrightarrow{AB}]+_{V}[\overrightarrow{CD}]=_{V}[\overrightarrow{CD}]+_{V}[\overrightarrow{AB}].\label{eq:COMMTV}
\end{equation}
\end{thm}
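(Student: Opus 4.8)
The plan is to prove commutativity by completing a parallelogram and then invoking the base-point independence of $+_{V}$ established in Theorem \ref{thm:ForAddition.Independnc.}. I first dispose of the degenerate cases: if $A=_{P}B$ or $C=_{P}D$ then one of the classes is $\overrightarrow{0}$, and the assertion reduces to addition with the zero vector, which is immediate. So I assume $A\neq_{P}B$ and $C\neq_{P}D$, which is exactly the hypothesis under which Axiom 5 and Definition \ref{def:VectorAddition} apply.

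First I fix a point $P$ and compute $[\overrightarrow{AB}]+_{V}[\overrightarrow{CD}]$ via Definition \ref{def:VectorAddition}: let $\overrightarrow{KP}\;\Re\;\overrightarrow{AB}$ (with head $P$) and $\overrightarrow{PL}\;\Re\;\overrightarrow{CD}$ (with tail $P$), so that $[\overrightarrow{AB}]+_{V}[\overrightarrow{CD}]=_{V}[\overrightarrow{KL}]$. Next I apply Axiom 5 to $\overrightarrow{CD}$ and the point $K$ to produce the \emph{unique} point $M$ with $\overrightarrow{KM}\;\Re\;\overrightarrow{CD}$. Since $\Re$ is an equivalence relation (Theorem \ref{thm:Equvs.Relan.of.ArrowONline}), combining $\overrightarrow{KM}\;\Re\;\overrightarrow{CD}$ with $\overrightarrow{PL}\;\Re\;\overrightarrow{CD}$ yields $\overrightarrow{KM}\;\Re\;\overrightarrow{PL}$.

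The key step is to show that this parallelogram closes, i.e.\ that $\overrightarrow{ML}\;\Re\;\overrightarrow{AB}$, equivalently $\overrightarrow{ML}\;\Re\;\overrightarrow{KP}$. I would write $\overrightarrow{ML}$ as $\overrightarrow{MK}+_{A}\overrightarrow{KP}+_{A}\overrightarrow{PL}$ and expand both $\langle\overrightarrow{ML},\overrightarrow{ML}\rangle_{A}$ and $\langle\overrightarrow{ML},\overrightarrow{KP}\rangle_{A}$ using the bilinearity of Axiom 1 together with $\overrightarrow{MK}=_{A}-\overrightarrow{KM}$ (Definition \ref{def:(-)Minuse.arrowAB}) and Equation (\ref{eq:A01-1}). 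The relation $\overrightarrow{KM}\;\Re\;\overrightarrow{PL}$ furnishes $||\overrightarrow{KM}||_{A}=||\overrightarrow{PL}||_{A}$ and $\langle\overrightarrow{KM},\overrightarrow{PL}\rangle_{A}=||\overrightarrow{PL}||_{A}^{2}$; moreover, pairing $\overrightarrow{KM}\;\Re\;\overrightarrow{PL}$ with the reflexive relation $\overrightarrow{KP}\;\Re\;\overrightarrow{KP}$ and applying Axiom 4 gives $\langle\overrightarrow{KM},\overrightarrow{KP}\rangle_{A}=\langle\overrightarrow{PL},\overrightarrow{KP}\rangle_{A}$. Substituting these identities collapses every cross term and leaves $||\overrightarrow{ML}||_{A}=||\overrightarrow{KP}||_{A}$ and $\langle\overrightarrow{ML},\overrightarrow{KP}\rangle_{A}=||\overrightarrow{KP}||_{A}^{2}$, so the normalized pre-inner product equals $1$ and $\overrightarrow{ML}\;\Re\;\overrightarrow{KP}\;\Re\;\overrightarrow{AB}$ by transitivity.

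With the parallelogram closed I finish by computing $[\overrightarrow{CD}]+_{V}[\overrightarrow{AB}]$ at the point $M$: the unique arrow equivalent to $\overrightarrow{CD}$ with head $M$ is $\overrightarrow{KM}$ (by the uniqueness clause used in the second paragraph), and the unique arrow equivalent to $\overrightarrow{AB}$ with tail $M$ is $\overrightarrow{ML}$ (by the key step), so Definition \ref{def:VectorAddition} gives $[\overrightarrow{CD}]+_{V}[\overrightarrow{AB}]=_{V}[\overrightarrow{KM}+_{A}\overrightarrow{ML}]=_{V}[\overrightarrow{KL}]$. Because Theorem \ref{thm:ForAddition.Independnc.} guarantees that each sum is independent of the base point, it follows that $[\overrightarrow{AB}]+_{V}[\overrightarrow{CD}]=_{V}[\overrightarrow{KL}]=_{V}[\overrightarrow{CD}]+_{V}[\overrightarrow{AB}]$. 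The main obstacle is the key step: the parallelogram closes only because the sides $\overrightarrow{KM}$ and $\overrightarrow{PL}$ represent the same vector, and converting this into the two defining conditions for $\overrightarrow{ML}\;\Re\;\overrightarrow{KP}$ relies precisely on the inner-product transfer of Axiom 4 combined with full bilinearity; the remainder is bookkeeping with the uniqueness in Axiom 5 and the independence result.
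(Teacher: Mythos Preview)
Your argument is correct. Both your proof and the paper's rest on the same mechanism---Axiom~4 transferring pre-inner products between $\Re$-related arrows---but the organization differs. The paper picks two independent base points $P_{1},P_{2}$, obtains $[\overrightarrow{AB}]+_{V}[\overrightarrow{CD}]=_{V}[\overrightarrow{K_{1}L_{1}}]$ and $[\overrightarrow{CD}]+_{V}[\overrightarrow{AB}]=_{V}[\overrightarrow{K_{2}L_{2}}]$, and then invokes the pre-packaged Corollary~\ref{cor:COR I,2} (via Remark~\ref{rem:R1}) to conclude $\overrightarrow{K_{1}L_{1}}\;\Re\;\overrightarrow{K_{2}L_{2}}$. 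You instead build a single parallelogram $K,P,L,M$, verify closure $\overrightarrow{ML}\;\Re\;\overrightarrow{KP}$ by an explicit bilinear expansion, and then read both sums off the common diagonal $\overrightarrow{KL}$. Your inline computation is exactly the content of Lemma~\ref{lem:A22} and Theorem~\ref{thm:WAS AXIOM (7)} specialized to this configuration, so you are reproving a case of Corollary~\ref{cor:COR I,2} rather than citing it. The trade-off: the paper's route is shorter once the corollary is in hand, while yours is self-contained, handles the degenerate zero-vector case explicitly, and sidesteps the disjointness hypothesis $\{P_{1},K_{1},L_{1}\}\cap\{P_{2},K_{2},L_{2}\}=\emptyset$ that Corollary~\ref{cor:COR I,2} carries but the paper does not verify.
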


\begin{proof}
Let $P_{1}$ be any point and use Axiom 5 to find the arrows $\overrightarrow{K_{1}P_{1}}$
and $\overrightarrow{P_{1}L_{1}}$ such that (see Figure 7.1)
\begin{equation}
\overrightarrow{K_{1}P_{1}}\;\Re\;\overrightarrow{AB}\qquad\text{and}
\qquad\overrightarrow{P_{1}L_{1}}\;\Re\;\overrightarrow{CD}.\label{eq:COMM0}
\end{equation}
Then by Definition \ref{def:VectorAddition} we have 
\begin{equation}
[\overrightarrow{AB}]+_{V}[\overrightarrow{CD}]=[\overrightarrow{K_{1}P_{1}}+_{A}\overrightarrow{P_{1}L_{1}}]=[\overrightarrow{K_{1}L_{1}}].\label{eq:COMM1}
\end{equation}
On the other hand, let $P_{2}$ be any point and use Axiom 5 to
find the arrows $\overrightarrow{K_{2}P_{2}}$ and $\overrightarrow{P_{2}L_{2}}$
such that (see Figure (\ref{fig:Z})
below)
\begin{equation}
\overrightarrow{P_{2}L_{2}}\;\Re\;\overrightarrow{AB}\qquad\text{and}
\qquad\overrightarrow{K_{2}P_{2}}\;\Re\;\overrightarrow{CD}.\label{eq:COMM2}
\end{equation}
\begin{figure}[ht]
\label{fig:Z}
\includegraphics[scale=0.4]{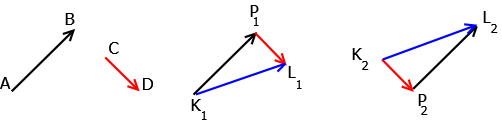}
\caption{An illustration of Theorem \ref{thm:poi}. Note that the equivalence between arrows is color coded where equivalent arrows share the same color.}
\end{figure}
Then we have again by Definition \ref{def:VectorAddition} that
\begin{equation}
[\overrightarrow{CD}]+_{V}[\overrightarrow{AB}]=[\overrightarrow{K_{2}P_{2}}+_{A}\overrightarrow{P_{2}L_{2}}]=[\overrightarrow{K_{2}L_{2}}].\label{eq:COMM3}
\end{equation}
Equations (\ref{eq:COMM1}) and (\ref{eq:COMM3}) imply 
that we need to prove
\begin{equation}
\overrightarrow{K_{1}L_{1}}\;\Re\;\overrightarrow{K_{2}L_{2}}.\label{eq:COMM4}
\end{equation}
By Equations (\ref{eq:COMM0}) and (\ref{eq:COMM2}), and the transitivity of the relation $\Re$, we have 
\begin{equation}
\overrightarrow{K_{1}P_{1}}\;\Re\;\overrightarrow{P_{2}L_{2}}\qquad\text{and}
\qquad\overrightarrow{P_{1}L_{1}}\;\Re\;\overrightarrow{K_{2}P_{2}}.\label{eq:COMM5}
\end{equation}
Applications of Remark \ref{rem:R1} and Corollary \ref{cor:COR I,2}
to Equation (\ref{eq:COMM5}) yields the desired result of Equation (\ref{eq:COMM4}).
\end{proof}

We show also that the addition $+_{V}$ is associative and identify
the additive identity, namely $[\overrightarrow{PP}]$ where $P$
is any point in $\mathcal{P}$.
\begin{thm}
For any $[\overrightarrow{AB}],\;[\overrightarrow{CD}]$, and $[\overrightarrow{EF}]$
in $\mathcal{P}_{v}$ we have
\begin{itemize}
\item[1.] addition $+_{V}$ is associative, that is 
\begin{equation}
([\overrightarrow{AB}]+_{V}[\overrightarrow{CD}])+_{V}[\overrightarrow{EF}]=_{V}[\overrightarrow{AB}]+_{V}([\overrightarrow{CD}]+_{V}[\overrightarrow{EF}]);\label{eq:ASSOTV}
\end{equation}
\item[2.]
for any point $P$ and any $[\overrightarrow{AB}]$, $[\overrightarrow{AB}]+_{V}[\overrightarrow{PP}]=_{V}[\overrightarrow{AB}].$
\end{itemize}
\end{thm}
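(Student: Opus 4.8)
The plan is to lean on the independence of vector addition from the choice of intermediate point (Theorem \ref{thm:ForAddition.Independnc.}) together with Axiom 5, so that all three vectors can be realized along one head-to-tail chain and both parenthesizations collapse to a single equivalence class.

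For the identity assertion (Part 2), I would evaluate $[\overrightarrow{AB}]+_{V}[\overrightarrow{PP}]$ straight from Definition \ref{def:VectorAddition}, taking the intermediate point to be $B$. Realizing $[\overrightarrow{AB}]$ as an arrow with head $B$ returns $\overrightarrow{AB}$ itself (by reflexivity of $\Re$ from Theorem \ref{thm:Equvs.Relan.of.ArrowONline} and the uniqueness in Axiom 5), so the first summand is $\overrightarrow{AB}$. Realizing the zero class $[\overrightarrow{PP}]$ as an arrow $\overrightarrow{BL}$ with tail $B$ forces $B=_{P}L$ directly from Definition \ref{def:RelatioOfArrowOnline} (the analog of Proposition \ref{prop:A=00003DB,C=00003DD}), since $\overrightarrow{PP}$ has zero measure by Lemma \ref{lem:||AB||=00003D0 IFF}; hence the second summand is $\overrightarrow{BB}$. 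Definition \ref{def:VectorAddition} and Definition \ref{def:ARROWSADDITIONHEADTAIL} then give $[\overrightarrow{AB}]+_{V}[\overrightarrow{PP}]=_{V}[\overrightarrow{AB}+_{A}\overrightarrow{BB}]=_{V}[\overrightarrow{AB}]$, and Theorem \ref{thm:ForAddition.Independnc.} certifies that the choice of $B$ was immaterial.

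For associativity (Part 1), I would fix an arbitrary point $P_{0}$ and apply Axiom 5 successively to obtain points $P_{1},P_{2},P_{3}$ with $\overrightarrow{P_{0}P_{1}}\;\Re\;\overrightarrow{AB}$, $\overrightarrow{P_{1}P_{2}}\;\Re\;\overrightarrow{CD}$, and $\overrightarrow{P_{2}P_{3}}\;\Re\;\overrightarrow{EF}$ (when a summand is the zero vector the corresponding point coincides with its predecessor, so the chain always exists). Appealing to Theorem \ref{thm:ForAddition.Independnc.}, I am free to compute each pairwise sum using whichever point of this chain is convenient. In the left grouping, $[\overrightarrow{AB}]+_{V}[\overrightarrow{CD}]$ evaluated at $P_{1}$ is $[\overrightarrow{P_{0}P_{2}}]$, and adding $[\overrightarrow{EF}]$ at $P_{2}$ (where $\overrightarrow{P_{0}P_{2}}$ already has head $P_{2}$ and $\overrightarrow{P_{2}P_{3}}\;\Re\;\overrightarrow{EF}$) yields $[\overrightarrow{P_{0}P_{3}}]$. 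In the right grouping, $[\overrightarrow{CD}]+_{V}[\overrightarrow{EF}]$ evaluated at $P_{2}$ is $[\overrightarrow{P_{1}P_{3}}]$, and adding $[\overrightarrow{AB}]$ at $P_{1}$ (where $\overrightarrow{P_{0}P_{1}}\;\Re\;\overrightarrow{AB}$ and $\overrightarrow{P_{1}P_{3}}$ already has tail $P_{1}$) again yields $[\overrightarrow{P_{0}P_{3}}]$. Equation (\ref{eq:ASSOTV}) follows.

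The one genuine obstacle is bookkeeping rather than calculation: I must respect the orientation convention of Definition \ref{def:VectorAddition}, in which the first summand supplies the arrow whose head is the shared point and the second supplies the arrow whose tail is the shared point, so that each pairwise sum can indeed be read straight off the chain; and I must explicitly invoke Theorem \ref{thm:ForAddition.Independnc.} to license using $P_{1}$ and $P_{2}$ as the intermediate points in the two different groupings. Once the chain is built and this convention is tracked, both sides telescope to $[\overrightarrow{P_{0}P_{3}}]$ with no further computation.
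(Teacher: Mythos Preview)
Your proposal is correct and takes essentially the same approach as the paper: both build a single head-to-tail chain of representative arrows via Axiom~5 and then let both parenthesizations collapse to the class of the composite arrow, invoking independence of the intermediate point (Theorem~\ref{thm:ForAddition.Independnc.}) and associativity of $+_{A}$ (Theorem~\ref{thm:ARR.ASSO}). The only cosmetic differences are labeling (the paper uses $K_{1},P_{1},L_{1},T_{1}$ where you use $P_{0},P_{1},P_{2},P_{3}$) and, for Part~2, the paper chooses $P$ itself as the intermediate point so that $[\overrightarrow{PP}]$ is represented by $\overrightarrow{PP}$ directly, whereas you choose $B$ and argue that the tail-$B$ representative of the zero class is $\overrightarrow{BB}$; both are valid.
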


\begin{proof}
(1) Let $P_{1}$ be any point in $\mathcal{P}$. Then by using Axiom
5, we let $\overrightarrow{K_{1}P_{1}}$ and  $\overrightarrow{P_{1}L_{1}}$
be in $\mathcal{P}_{A}$ such
that 
\[
\overrightarrow{K_{1}P_{1}}\;\Re\;\overrightarrow{AB}\qquad\text{and}
\qquad\overrightarrow{P_{1}L_{1}}\;\Re\;\overrightarrow{CD}.
\]
Also, by an application of Axiom 5 to the arrow $\overrightarrow{EF}$ and the point $L_{1}$, we get the arrow $\overrightarrow{L_{1}T_{1}}$ such that
\[\overrightarrow{L_{1}T_{1}}\;\Re\;\overrightarrow{EF}.\]
See Figure \ref{fig:Theorem55}.
\begin{figure}[ht]
\centering
\includegraphics[width=0.5\linewidth]{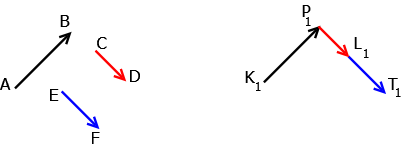}
\caption{An illustration used in the proof of the associativity of vector addition. As usual, arrow equivalence is denoted via color coding. }
\label{fig:Theorem55}
\end{figure}

Now by Definition \ref{def:VectorAddition} and Theorem \ref{thm:ARR.ASSO} we have 
\begin{align}
([\overrightarrow{AB}]+_{V}[\overrightarrow{CD}])+_{V}[\overrightarrow{EF}]&
=_{V}[\overrightarrow{K_{1}P_{1}}+_{A}\overrightarrow{P_{1}L_{1}}]+_{V}[\overrightarrow{L_{1}T_{1}}]\nonumber\\
&=_{V}[(\overrightarrow{K_{1}P_{1}}+_{A}\overrightarrow{P_{1}L_{1}})+_{A}\overrightarrow{L_{1}T_{1}}]\nonumber\\
&=_{V}[\overrightarrow{K_{1}P_{1}}+_{A}(\overrightarrow{P_{1}L_{1}}+_{A}\overrightarrow{L_{1}T_{1}})]\nonumber\\
&=_{V}[\overrightarrow{K_{1}P_{1}}]+_{V}[\overrightarrow{P_{1}L_{1}}+_{A}\overrightarrow{L_{1}T_{1}}]\nonumber\\
&=_{V}[\overrightarrow{AB}]+_{V}([\overrightarrow{CD}]+_{V}[\overrightarrow{EF}])\nonumber.
\end{align}
Thus, the addition $+_{V}$ is associative.

\medskip
(2) Let $P$ be any point and $[\overrightarrow{AB}]$ be any given
vector. Then by Axiom 5 there exists a unique arrow, say
$\overrightarrow{KP}$, such that $\overrightarrow{KP}\;\Re\;\overrightarrow{AB}$.
By Definition \ref{def:VectorAddition} we have 
\begin{align}
[\overrightarrow{AB}]+_{V}[\overrightarrow{PP}] & =_{V}[\overrightarrow{KP}+_{A}\overrightarrow{PP}]
=_{V}[\overrightarrow{KP}]=_{V}[\overrightarrow{AB}].\nonumber
\end{align}
\end{proof}

Next we prove that for any $[\overrightarrow{AB}]$ in $\mathcal{P}_{v}$
there exists the additive inverse which is $[-\overrightarrow{AB}]$.
\begin{thm}
For any $[\overrightarrow{AB}]$ in $\mathcal{P}_{v}$ we have 
\[
[\overrightarrow{AB}]+_{V}[-\overrightarrow{AB}]=_{V}[\overrightarrow{AA}].
\]
\end{thm}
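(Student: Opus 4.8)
The plan is to exploit the freedom, guaranteed by Theorem \ref{thm:ForAddition.Independnc.}, to compute $[\overrightarrow{AB}]+_{V}[-\overrightarrow{AB}]$ using whatever base point is most convenient, and to choose that point to be $B$ so that both summands are represented by arrows already in hand. Since Definition \ref{def:(-)Minuse.arrowAB} gives $-\overrightarrow{AB}=_{A}\overrightarrow{BA}$, the statement to prove is $[\overrightarrow{AB}]+_{V}[\overrightarrow{BA}]=_{V}[\overrightarrow{AA}]$. First I would dispose of the degenerate case $A=_{P}B$, where $\overrightarrow{AB}=_{A}\overrightarrow{AA}$ is the zero vector and $-\overrightarrow{AB}=_{A}\overrightarrow{AA}$ as well; here the claim reads $[\overrightarrow{AA}]+_{V}[\overrightarrow{AA}]=_{V}[\overrightarrow{AA}]$, which collapses immediately by the additive-identity clause of the preceding theorem (taking the point in that clause to be $A$).

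For the main case $A\neq_{P}B$, I would unwind Definition \ref{def:VectorAddition} with the base point set equal to $B$. To represent $[\overrightarrow{AB}]$ the definition asks for the unique arrow with head $B$ that is $\Re$-related to $\overrightarrow{AB}$; by reflexivity of $\Re$ (Theorem \ref{thm:Equvs.Relan.of.ArrowONline}) the arrow $\overrightarrow{AB}$ itself is such an arrow, so the uniqueness clause of Axiom 5 forces this representative to be $\overrightarrow{AB}$ (i.e.\ $K=_{P}A$). Symmetrically, to represent $[\overrightarrow{BA}]$ the definition asks for the unique arrow with tail $B$ that is $\Re$-related to $\overrightarrow{BA}$; reflexivity again identifies it as $\overrightarrow{BA}$ (i.e.\ $L=_{P}A$). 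Definition \ref{def:VectorAddition} then gives
\[
[\overrightarrow{AB}]+_{V}[\overrightarrow{BA}]=_{V}[\overrightarrow{AB}+_{A}\overrightarrow{BA}]=_{V}[\overrightarrow{AA}],
\]
where the arrow sum is evaluated by Definition \ref{def:ARROWSADDITIONHEADTAIL}.

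The one genuine subtlety, and the step I would be most careful about, is invoking Axiom 5 with the correct orientation: the definition of $+_{V}$ wants a representative of the first vector whose \emph{head} is the base point and a representative of the second vector whose \emph{tail} is the base point. With base point $B$, this means $\overrightarrow{AB}$ (head $B$) must be matched against the unique-head clause and $\overrightarrow{BA}$ (tail $B$) against the unique-tail clause; once this bookkeeping is pinned down the computation is immediate. I would note that the result can alternatively be recovered from commutativity (Theorem \ref{thm:poi}) together with the identity and inverse structure already established, but the direct base-point argument above is the cleanest and most self-contained.
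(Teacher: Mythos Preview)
Your proposal is correct and follows essentially the same route as the paper: rewrite $-\overrightarrow{AB}$ as $\overrightarrow{BA}$ via Definition~\ref{def:(-)Minuse.arrowAB}, then apply Definition~\ref{def:VectorAddition} with base point $B$ so that reflexivity of $\Re$ lets the arrows $\overrightarrow{AB}$ and $\overrightarrow{BA}$ serve as their own representatives, yielding $[\overrightarrow{AB}+_{A}\overrightarrow{BA}]=_{V}[\overrightarrow{AA}]$. Your version is simply more explicit than the paper's about the base-point bookkeeping and the degenerate case $A=_{P}B$.
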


\begin{proof}
By Definition \ref{def:(-)Minuse.arrowAB}
we have $-\overrightarrow{AB}=_{A}\overrightarrow{BA}$,
and since the relation $\Re$ is reflexive we have  $-\overrightarrow{AB}\;\Re\;\overrightarrow{BA}.$
It follows by Definitions \ref{def:ARROWSADDITIONHEADTAIL} and \ref{def:VectorAddition} and that
\[
[\overrightarrow{AB}]+_{V}[-\overrightarrow{AB}]  =_{V}[\overrightarrow{AB}]+_{V}[\overrightarrow{BA}]=_{V}[\overrightarrow{AB}+_{A}\overrightarrow{BA}]=_{V}[\overrightarrow{AA}]\nonumber.
\]
\end{proof}

In the following theorem we prove that the scalar multiplication is
associative and distributive.
\begin{thm}\label{thm:iaddedthislabel}
For any $[\overrightarrow{AB}]$ in $\mathcal{P}_{v}$ and $s,\;t\in\mathbb{R}$
we have
\begin{itemize}
\item[1.] $t\;s\;[\overrightarrow{AB}]=_{V}t\;[(s)\;\overrightarrow{AB}]$,

\item[2.]
for any arrow $\overrightarrow{AB}$ we have $(t+s)\;[\overrightarrow{AB}]=_{V}t\;[\overrightarrow{AB}]+_{V}s\;[\overrightarrow{AB}]$.
\end{itemize}
\end{thm}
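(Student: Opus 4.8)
The plan is to reduce both identities to results already in hand: the associativity of \emph{arrow} scalar multiplication (Theorem~\ref{scalarmultasscocthm}), the head-to-tail recipe for vector addition (Definition~\ref{def:VectorAddition}) together with its independence of the auxiliary point (Theorem~\ref{thm:ForAddition.Independnc.}), and the well-definedness of vector scalar multiplication on $\Re$-classes (Lemma~\ref{lem:AB R CD -> tAB R tCD}). Part~(1) is immediate from unwinding Definition~\ref{def:Vector-Scalar Multiplication}: I would write $t\,s\,[\overrightarrow{AB}]=_{V}(ts)\,[\overrightarrow{AB}]=_{V}[(ts)\,\overrightarrow{AB}]$, then invoke Theorem~\ref{scalarmultasscocthm} for the \emph{arrow} equality $(ts)\,\overrightarrow{AB}=_{A}(t)\,((s)\,\overrightarrow{AB})$, so the two arrows lie in one class and $[(ts)\,\overrightarrow{AB}]=_{V}[(t)\,((s)\,\overrightarrow{AB})]=_{V}t\,[(s)\,\overrightarrow{AB}]$, the last step again by Definition~\ref{def:Vector-Scalar Multiplication}. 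No obstacle arises here.

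Part~(2) is the substantive one. First I would clear the degenerate cases: if $A=_{P}B$, or $t=0$, or $s=0$, then one summand is a zero arrow $\overrightarrow{AA}$ whose class is $\overrightarrow{0}$, and the identity collapses to the already-proved facts that $\overrightarrow{0}$ is the additive identity and that $(0)\overrightarrow{AB}=_{A}\overrightarrow{AA}$. So assume $A\neq_{P}B$ and $t,s\neq 0$; then by Definition~\ref{def:(Scalar-Multiplication-of} there are points $C,D$ on $l_{AB}$, each distinct from $A$, with $(t)\,\overrightarrow{AB}=_{A}\overrightarrow{AC}$ and $(s)\,\overrightarrow{AB}=_{A}\overrightarrow{AD}$, whence $t\,[\overrightarrow{AB}]=_{V}[\overrightarrow{AC}]$ and $s\,[\overrightarrow{AB}]=_{V}[\overrightarrow{AD}]$. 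Since vector addition does not depend on the chosen point (Theorem~\ref{thm:ForAddition.Independnc.}), I would evaluate $[\overrightarrow{AC}]+_{V}[\overrightarrow{AD}]$ at the point $P=C$. Reflexivity identifies $\overrightarrow{AC}$ as an arrow into $C$ equivalent to $\overrightarrow{AC}$, and the uniqueness clause of Axiom~5 makes it the only one, so the tail $K$ equals $A$; it then remains to find the unique point $L$ with $\overrightarrow{CL}\;\Re\;\overrightarrow{AD}$, after which Definition~\ref{def:VectorAddition} gives the sum as $[\overrightarrow{KL}]=[\overrightarrow{AL}]$.

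To identify $L$, I would guess it and appeal to the uniqueness in Axiom~5: define $L$ by $\overrightarrow{AL}=_{A}(t+s)\,\overrightarrow{AB}$ (a point on $l_{AB}$, unique by Theorem~\ref{thm:aABbAB}) and verify $\overrightarrow{CL}\;\Re\;\overrightarrow{AD}$ directly. Writing $\overrightarrow{CL}=_{A}\overrightarrow{CA}+_{A}\overrightarrow{AL}$ with $\overrightarrow{CA}=_{A}-((t)\,\overrightarrow{AB})$ and expanding $\langle\overrightarrow{CL},\overrightarrow{CL}\rangle_{A}$ and $\langle\overrightarrow{CL},\overrightarrow{AD}\rangle_{A}$ by Axioms~1 and~2 — exactly the bilinear bookkeeping used in the proof of Theorem~\ref{thm:WAS.AXIOM (5)} — the coefficient of $\langle\overrightarrow{AB},\overrightarrow{AB}\rangle_{A}$ collapses to $s^{2}$ in both, giving $||\overrightarrow{CL}||_{A}=|s|\,||\overrightarrow{AB}||_{A}=||\overrightarrow{AD}||_{A}$ and $\langle\overrightarrow{CL},\overrightarrow{AD}\rangle_{A}=s^{2}||\overrightarrow{AB}||_{A}^{2}=||\overrightarrow{CL}||_{A}\,||\overrightarrow{AD}||_{A}$. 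Pleasantly, the normalized pre-inner product is then forced to be $+1$ (not $-1$), since $s^{2}\ge 0$, so no sign case-split is needed and $\overrightarrow{CL}\;\Re\;\overrightarrow{AD}$; Axiom~5 then pins $L$ down as this point. Chaining the equalities yields $t\,[\overrightarrow{AB}]+_{V}s\,[\overrightarrow{AB}]=_{V}[\overrightarrow{AL}]=_{V}[(t+s)\,\overrightarrow{AB}]=_{V}(t+s)\,[\overrightarrow{AB}]$. The main obstacle is not the computation but the orchestration of Definition~\ref{def:VectorAddition}: choosing $P=C$, correctly reading off which equivalent arrow points into versus out of $P$, and using the uniqueness half of Axiom~5 twice (once to force $K=A$, once to fix the value of $L$), all while keeping the degenerate cases — in particular $t+s=0$, which this setup handles gracefully by sending $L$ back to $A$ — under control.
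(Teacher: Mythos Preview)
Your argument is correct and follows essentially the same route as the paper. For Part~(2), the paper also fixes the head of $(t)\,\overrightarrow{AB}$ as the auxiliary point in Definition~\ref{def:VectorAddition}, takes the arrow out of that point supplied by Axiom~5, and then reduces everything to a bilinear expansion in $\langle\overrightarrow{AB},\overrightarrow{AB}\rangle_{A}$; the only difference is that the paper reasons forward (let $D$ be the Axiom~5 point, then prove $\overrightarrow{AD}\;\Re\;(t+s)\overrightarrow{AB}$), whereas you guess $L$ via $(t+s)\overrightarrow{AB}$ and verify the Axiom~5 condition. Your variant is marginally cleaner in that the verification stays entirely inside scalar multiples of $\overrightarrow{AB}$ and so needs only Axioms~1 and~2, while the paper's forward direction invokes Axiom~4 to handle the abstractly-given $\overrightarrow{KD}$.
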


\begin{proof}
(1) Let $t,\;s\in\mathbb{R}$ and $\overrightarrow{AB}$ be any arrow.
By Definition \ref{def:Vector-Scalar Multiplication} we have 
\begin{equation}
s\;[\overrightarrow{AB}]=_{V}[(s)\;\overrightarrow{AB}].\label{eq:MULTP0}
\end{equation}
An application of Definition \ref{def:Vector-Scalar Multiplication}
to Equation (\ref{eq:MULTP0}) yields
\[
t\;s\;[\overrightarrow{AB}]=_{V}t\;[(s)\;\overrightarrow{AB}].
\]

\medskip
(2) If $A =_{P} B$, the identity holds trivially. So without loss of generality assume $A\neq_{P} B$.
Let $K$ and $L$ be some points that lie on the line $l_{AB}$
such that
\begin{equation}
(t)\;\overrightarrow{AB}=_{A}\overrightarrow{AK}\qquad\text{and}
\qquad(s)\;\overrightarrow{AB}=_{A}\overrightarrow{AL},\label{eq:D0}
\end{equation}
By Definition \ref{def:Vector-Scalar Multiplication} and
Equation (\ref{eq:D0}) we have
\begin{align}
t\;[\overrightarrow{AB}]+_{V}s\;[\overrightarrow{AB}] & =_{V}[(t)\;\overrightarrow{AB}]+_{V}[(s)\;\overrightarrow{AB}]=_{V}[\overrightarrow{AK}]+_{V}[\overrightarrow{AL}].\label{eq:D1}
\end{align}
If we use Axiom 5 for the arrow $\overrightarrow{AL}$ and the point $K$, we get the arrow $\overrightarrow{KD}$ where
$\overrightarrow{AL}\;\Re\;\overrightarrow{KD}$.
Thus by Definition \ref{def:VectorAddition} we can rewrite Equation (\ref{eq:D1}) as 
\begin{equation}
t\;[\overrightarrow{AB}]+_{V}s\;[\overrightarrow{AB}]=_{V}[\overrightarrow{AK}+_{A}\overrightarrow{KD}]=_{V}[\overrightarrow{AD}].\label{eq:D2}
\end{equation}
On the other hand, by Definition \ref{def:Vector-Scalar Multiplication}
we have 
\begin{equation}
(t+s)\;[\overrightarrow{AB}]=_{V}[(t+s)\;\overrightarrow{AB}].\label{eq:D3}
\end{equation}
Equations (\ref{eq:D2}) and (\ref{eq:D3}) 
imply that proving $(t+s)\;[\overrightarrow{AB}]=_{V}t\;[\overrightarrow{AB}]+_{V}s\;[\overrightarrow{AB}]$
is equivalent to proving
\begin{equation}
\overrightarrow{AD}\;\Re\;(t+s)\;\overrightarrow{AB}.\label{eq:D4}
\end{equation}
Various applications of Definition \ref{def:MeasureOf-any-arrow} and Equation (\ref{eq:AXIOMc}) show that
\begin{align}
||\overrightarrow{AD}||_{A}^{2}&
=\left<\overrightarrow{AD},\overrightarrow{AD}\right>_{A}
=\left<\overrightarrow{AK}+_{A}\overrightarrow{KD},\overrightarrow{AK}+_{A}\overrightarrow{KD}\right>_{A}\nonumber\\
&=\left<\overrightarrow{AK},\overrightarrow{AK}\right>_{A}+ 2\left<\overrightarrow{AK},\overrightarrow{KD}\right>_{A}
+\left<\overrightarrow{KD},\overrightarrow{KD}\right>_{A}.\label{eq:RA}
\end{align}
Moreover,  Equations  (\ref{eq:A01}) and (\ref{eq:D0}) and  Definition\;\ref{def:MeasureOf-any-arrow} imply that 
\begin{equation}
\left<\overrightarrow{AK},\overrightarrow{AK}\right>_{A}=\left<(t)\;\overrightarrow{AB},(t)\;\overrightarrow{AB}\right>_{A}=t^{2}\;\left<\overrightarrow{AB},\overrightarrow{AB}\right>_{A}=t^{2}\;||\overrightarrow{AB}||_{A}^{2}.\label{eq:D9}
\end{equation}
Equation (\ref{eq:D0}) and the fact that $\overrightarrow{AL}\;\Re\;\overrightarrow{KD}$ shows that 
$\overrightarrow{KD}\;\Re\;(s)\;\overrightarrow{AB}$,
and since the relation $\Re$ is reflexive we have
$\overrightarrow{AK}\;\Re\;\overrightarrow{AK}$.
Then an application of  Axiom 4 implies that 
\begin{equation}
\left<\overrightarrow{AK},\overrightarrow{KD}\right>_{A}
=\left<\overrightarrow{AK},(s)\;\overrightarrow{AB}\right>_{A} 
=\left<(t)\;\overrightarrow{AB},(s)\;\overrightarrow{AB}\right>_{A}
=t\;s\;||\overrightarrow{AB}||_{A}^{2}.\label{eq:D10}
\end{equation}

Similarly, we find that
\begin{equation}
\left<\overrightarrow{KD},\overrightarrow{KD}\right>_{A}=s^{2}\;||\overrightarrow{AB}||_{A}^{2}.\label{eq:D11}
\end{equation}
If we plug the Equations (\ref{eq:D9}), (\ref{eq:D10}), and (\ref{eq:D11}) into Equation (\ref{eq:RA}),
we get 
\[
||\overrightarrow{AD}||_{A}^{2}
=t^{2}\;||\overrightarrow{AB}||_{A}^{2}+2t\;s\;||\overrightarrow{AB}||_{A}^{2}+s^{2}\;||\overrightarrow{AB}||_{A}^{2}=(t+s)^{2}\;||\overrightarrow{AB}||_{A}^{2}.
\]
Taking the positive square root of both sides in the above yields
\begin{align}
||\overrightarrow{AD}||_{A}&
=(t+s)\;||\overrightarrow{AB}||_{A}.\label{eq:north}
\end{align}
Furthermore  Equation (\ref{eq:A01}), Lemma \ref{lem:tAB}, and Definition \ref{def:ARROWSADDITIONHEADTAIL} imply that
\begin{align}
&\frac{\left<(t+s)\;\overrightarrow{AB},\overrightarrow{AD}\right>_{A}}{||(t+s)\;\overrightarrow{AB}||_{A}\;||\overrightarrow{AD}||_{A}}
=\frac{(t+s)\;\left<\overrightarrow{AB},\overrightarrow{AD}\right>_{A}}{|t+s|\;||\overrightarrow{AB}||_{A}\;||\overrightarrow{AD}||_{A}}\nonumber\\
&=\frac{(t+s)}{|t+s|}\frac{\left<\overrightarrow{AB},\overrightarrow{AK}+_{A}\overrightarrow{KD}\right>_{A}}{||\overrightarrow{AB}||_{A}\;||\overrightarrow{AD}||_{A}}
=\frac{(t+s)}{|t+s|}\left(\frac{\left<\overrightarrow{AB},\overrightarrow{AK}\right>_{A}}{||\overrightarrow{AB}||_{A}\;||\overrightarrow{AD}||_{A}}+\frac{\left<\overrightarrow{AB},\overrightarrow{KD}\right>_{A}}{||\overrightarrow{AB}||_{A}\;||\overrightarrow{AD}||_{A}}\right).\label{eq:D13}
\end{align}
By definition,
\begin{equation}
\left<\overrightarrow{AB},\overrightarrow{AK}\right>_{A}  =\left<\overrightarrow{AB},(t)\;\overrightarrow{AB}\right>_{A}=t\left<\overrightarrow{AB},\overrightarrow{AB}\right>_{A},\label{eq:ZOBA}
\end{equation}
and by an application of Axiom 4
\begin{equation}
\left<\overrightarrow{AB},\overrightarrow{KD}\right>_{A}  =\left<\overrightarrow{AB},(s)\;\overrightarrow{AB}\right>_{A}=s\left<\overrightarrow{AB},\overrightarrow{AB}\right>_{A}.\label{eq:GOBA}
\end{equation}
If we plug Equations (\ref{eq:north}), (\ref{eq:ZOBA}), and (\ref{eq:GOBA})
into Equation (\ref{eq:D13}), we get
\begin{align}
\frac{\left<(t+s)\;\overrightarrow{AB},\overrightarrow{AD}\right>_{A}}{||(t+s)\;\overrightarrow{AB}||_{A}\;||\overrightarrow{AD}||_{A}}&
=\frac{(t+s)}{|t+s|}\left(\frac{t\left<\overrightarrow{AB},\overrightarrow{AB}\right>_{A}}{||\overrightarrow{AB}||_{A}\;||\overrightarrow{AD}||_{A}}+\frac{s\left<\overrightarrow{AB},\overrightarrow{AB}\right>_{A}}{||\overrightarrow{AB}||_{A}\;||\overrightarrow{AD}||_{A}}\right)\nonumber\\
&=\frac{(t+s)^{2}}{|t+s|^{2}}\frac{\left<\overrightarrow{AB},\overrightarrow{AB}\right>_{A}}{||\overrightarrow{AB}||_{A}^{2}}=1,\label{eq:south}
\end{align}
where the last equality follows from Definition \ref{def:MeasureOf-any-arrow}. Now by means of Definition \ref{def:RelatioOfArrowOnline},  Equations (\ref{eq:north}) and (\ref{eq:south}) imply that  $\overrightarrow{AD}\;\Re\;(t+s)\;\overrightarrow{AB}$, that is Equation (\ref{eq:D4}) holds.\end{proof}

Finally we show another distributive property of the scalar multiplication
and identify the vector scalar multiplicative identity as the real number
$1$.
\begin{thm}\label{thm:Hussmade}
For any $[\overrightarrow{AB}],\;[\overrightarrow{CD}]$ in $\mathcal{P}_{v}$
and any $t\in\mathbb{R}$ we have

\begin{itemize}
\item[1.]
$t\;([\overrightarrow{AB}]+_{V}[\overrightarrow{CD}])=_{V}t\;[\overrightarrow{AB}]+_{V}t\;[\overrightarrow{CD}]$,

\item[2.]
$1\;[\overrightarrow{AB}]=_{V}[\overrightarrow{AB}]$.
\end{itemize}
\end{thm}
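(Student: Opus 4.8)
The plan is to dispatch Part 2 immediately and then devote the bulk of the effort to Part 1. For Part 2, Definition \ref{def:Vector-Scalar Multiplication} gives $1\,[\overrightarrow{AB}]=_V[(1)\,\overrightarrow{AB}]$, and Theorem \ref{thm:1 AB=00003DAB} states $(1)\,\overrightarrow{AB}=_A\overrightarrow{AB}$; hence $1\,[\overrightarrow{AB}]=_V[\overrightarrow{AB}]$. The degenerate cases of Part 1 (where $t=0$, or one of the arrows has equal tail and head so that its class is $\overrightarrow{0}$) reduce to the additive-identity property and to $(0)\overrightarrow{AB}=_A\overrightarrow{AA}$ already established, so I will treat them separately and then assume $t\neq0$ with both classes nonzero.

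For the generic case I would fix an arbitrary point $P$ and, via Axiom 5, choose representatives $\overrightarrow{KP}\;\Re\;\overrightarrow{AB}$ and $\overrightarrow{PL}\;\Re\;\overrightarrow{CD}$ sharing the point $P$, so that Definition \ref{def:VectorAddition} gives $[\overrightarrow{AB}]+_V[\overrightarrow{CD}]=_V[\overrightarrow{KL}]$ and therefore the left-hand side is $[(t)\overrightarrow{KL}]$. For the right-hand side, Lemma \ref{lem:AB R CD -> tAB R tCD} yields $(t)\overrightarrow{KP}\;\Re\;(t)\overrightarrow{AB}$ and $(t)\overrightarrow{PL}\;\Re\;(t)\overrightarrow{CD}$, so $t\,[\overrightarrow{AB}]+_V t\,[\overrightarrow{CD}]=_V[(t)\overrightarrow{KP}]+_V[(t)\overrightarrow{PL}]$. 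Writing $(t)\overrightarrow{KP}=_A\overrightarrow{KM}$ and $(t)\overrightarrow{PL}=_A\overrightarrow{PN}$, I would evaluate this sum through Definition \ref{def:VectorAddition} with the base point chosen to be $M$ (legitimate by the base-point independence of Theorem \ref{thm:ForAddition.Independnc.}): the uniqueness clause of Axiom 5 forces the head-$M$ representative of $[\overrightarrow{KM}]$ to be $\overrightarrow{KM}$ itself and produces a unique $N'$ with $\overrightarrow{MN'}\;\Re\;\overrightarrow{PN}$, giving $t\,[\overrightarrow{AB}]+_V t\,[\overrightarrow{CD}]=_V[\overrightarrow{KN'}]$. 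The whole theorem thus collapses to the single relation $\overrightarrow{KN'}\;\Re\;(t)\overrightarrow{KL}$.

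To verify this relation I would check the two clauses of Definition \ref{def:RelatioOfArrowOnline}. For the length clause, expanding $||\overrightarrow{KN'}||_A^2=||\overrightarrow{KM}+_A\overrightarrow{MN'}||_A^2$ by Equation (\ref{eq:AXIOMc}) gives three terms: the diagonal terms are $t^2||\overrightarrow{KP}||_A^2$ and $t^2||\overrightarrow{PL}||_A^2$ by Equation (\ref{eq:A01}) and the norm-equality built into $\overrightarrow{MN'}\;\Re\;\overrightarrow{PN}$, while the cross term $\left<\overrightarrow{KM},\overrightarrow{MN'}\right>_A$ is converted by Axiom 4 (using $\overrightarrow{KM}\;\Re\;\overrightarrow{KM}$ and $\overrightarrow{MN'}\;\Re\;\overrightarrow{PN}$) into $\left<\overrightarrow{KM},\overrightarrow{PN}\right>_A=t^2\left<\overrightarrow{KP},\overrightarrow{PL}\right>_A$. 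Refactoring via $||\overrightarrow{KL}||_A^2=||\overrightarrow{KP}+_A\overrightarrow{PL}||_A^2$ gives $||\overrightarrow{KN'}||_A^2=t^2||\overrightarrow{KL}||_A^2=||(t)\overrightarrow{KL}||_A^2$, the required equality of lengths.

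For the direction clause I would compute $\left<\overrightarrow{KN'},(t)\overrightarrow{KL}\right>_A=t\left<\overrightarrow{KM}+_A\overrightarrow{MN'},\overrightarrow{KP}+_A\overrightarrow{PL}\right>_A$, expand into four inner products, and treat each with Equation (\ref{eq:A01}) together with Axiom 4 applied to the two terms carrying the factor $\overrightarrow{MN'}$; every term reduces to a multiple of $||\overrightarrow{KP}||_A^2$, $||\overrightarrow{PL}||_A^2$, or $\left<\overrightarrow{KP},\overrightarrow{PL}\right>_A$, and the total refactors to $t^2||\overrightarrow{KL}||_A^2$. Since this equals $||\overrightarrow{KN'}||_A\,||(t)\overrightarrow{KL}||_A=t^2||\overrightarrow{KL}||_A^2$ and is strictly positive for $t\neq0$, the normalized pre-inner product equals $1$ irrespective of the sign of $t$. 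The two clauses together give $\overrightarrow{KN'}\;\Re\;(t)\overrightarrow{KL}$, completing Part 1. I expect the main obstacle to be bookkeeping rather than conceptual: correctly anchoring the right-hand vector addition at the base point $M$ so that the resulting arrow $\overrightarrow{KN'}$ is exactly the one I can compare with $(t)\overrightarrow{KL}$, and invoking Axiom 4 in precisely the right orientation on each cross term, since a misaligned application would spoil the identification of those cross terms with $t^2\left<\overrightarrow{KP},\overrightarrow{PL}\right>_A$.
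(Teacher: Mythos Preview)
Your proposal is correct and follows essentially the same strategy as the paper's proof: pick representatives sharing a common point so that each side of Part 1 becomes a single concrete arrow, then verify the two clauses of Definition \ref{def:RelatioOfArrowOnline} by expanding via Equation (\ref{eq:AXIOMc}) and converting every cross term with Axiom 4 and Equation (\ref{eq:A01}); Part 2 is handled identically via Theorem \ref{thm:1 AB=00003DAB}. The only differences are notational---the paper anchors the construction at $B$ rather than an auxiliary point $P$, and names the scaled sum $\overrightarrow{AH}=(t)\overrightarrow{AK}$ rather than leaving it as $(t)\overrightarrow{KL}$---so the two arguments are interchangeable.
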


\begin{proof}
(1) If $A = _{P}B$ or $C =_{P} D$, the identity holds trivially.
So assume $A\neq_{P}B$ and $C\neq_{P}D$.
Using Axiom 5 for the arrow $\overrightarrow{CD}$
and the point $B$ we get the arrow $\overrightarrow{BK}$ such that
$\overrightarrow{CD}\;\Re\;\overrightarrow{BK}$.
Definitions \ref{def:VectorAddition} and \ref{def:Vector-Scalar Multiplication} 
imply that 
\[
t\;([\overrightarrow{AB}]+_{V}[\overrightarrow{CD}])
=_{V}t\;[\overrightarrow{AB}+_{A}\overrightarrow{BK}]=_{V}t\;[\overrightarrow{AK}]
=[(t)\;\overrightarrow{AK}]=[\overrightarrow{AH}].\label{eq:east}
\]
for some point $H$ that lies on the line $l_{AK}$ with
$(t)\;\overrightarrow{AK}\;\Re\;\overrightarrow{AH}$.
It follows from the preceding relation and Lemma \ref{lem:tAB} that 
$|t|\;||\overrightarrow{AK}||_{A}=_{A}||\overrightarrow{AH}||_{A}$.
Now let 
\begin{equation}
(t)\;\overrightarrow{AB}=_{A}\overrightarrow{AM}\qquad\text{and}\qquad
(t)\;\overrightarrow{CD}=_{A}\overrightarrow{CN},\label{eq:DT3}
\end{equation}
for some points $M$ and $N$ that lie on the lines $l_{AB}$ and
$l_{CD}$, respectively. Then 
\begin{equation}
t\;[\overrightarrow{AB}]+_{V}t\;[\overrightarrow{CD}]
 =_{V}[(t)\;\overrightarrow{AB}]+_{V}[(t)\;\overrightarrow{CD}]=_{V}[\overrightarrow{AM}]+_{V}[\overrightarrow{CN}].\label{eq:DT4}
\end{equation}
Applying Axiom 5 to the arrow $\overrightarrow{CN}$
and the point $M$ gives the unique arrow $\overrightarrow{ML}$ such
that
$\overrightarrow{ML}\;\Re\;\overrightarrow{CN}$.
It follows by Definition \ref{def:VectorAddition},
Equation (\ref{eq:DT4}), and the preceding relation that
\begin{equation}
t\;[\overrightarrow{AB}]+_{V}t\;[\overrightarrow{CD}]=_{V}[\overrightarrow{AM}+_{A}\overrightarrow{ML}]=_{V}[\overrightarrow{AL}].\label{eq:DT6}
\end{equation}
To complete the proof we need to show that
\begin{equation}
\overrightarrow{AH}\;\Re\;\overrightarrow{AL}.\label{eq:ton}
\end{equation}
By Definitions \ref{def:ARROWSADDITIONHEADTAIL} and \ref{def:MeasureOf-any-arrow} and Equation (\ref{eq:AXIOMc}) we have 
\begin{align}
||\overrightarrow{AL}||_{A}^{2}&
=\left<\overrightarrow{AL},\overrightarrow{AL}\right>_{A}=\left<\overrightarrow{AM}+_{A}\overrightarrow{ML},\overrightarrow{AM}+_{A}\overrightarrow{ML}\right>_{A}\nonumber\\
&=\left<\overrightarrow{AM},\overrightarrow{AM}\right>_{A}+2\left<\overrightarrow{AM},\overrightarrow{ML}\right>_{A}+\left<\overrightarrow{ML},\overrightarrow{ML}\right>_{A}.\label{eq:DT9}
\end{align}
Equations (\ref{eq:A01}) and (\ref{eq:DT3}) imply that 
\begin{equation}
\left<\overrightarrow{AM},\overrightarrow{AM}\right>_{A} =\left<(t)\;\overrightarrow{AB},(t)\;\overrightarrow{AB}\right>_{A}=t^{2}\;\left<\overrightarrow{AB},\overrightarrow{AB}\right>_{A}.\label{eq:DT10}
\end{equation}
Since Equation (\ref{eq:DT3}) and $\overrightarrow{ML}\;\Re\;\overrightarrow{CN}$ imply that
$\overrightarrow{ML}\;\Re\;(t)\;\overrightarrow{CD}$, we also find that
\begin{equation}
\left<\overrightarrow{ML},\overrightarrow{ML}\right>_{A}=\left<(t)\;\overrightarrow{CD},(t)\;\overrightarrow{CD}\right>_{A}=t^{2}\left<\overrightarrow{CD},\overrightarrow{CD}\right>_{A}=t^{2}\;\left<\overrightarrow{BK},\overrightarrow{BK}\right>_{A},\label{eq:DT14}
\end{equation}
where the last equality made use of Axiom 4 and the fact that $\overrightarrow{CD}\;\Re\;\overrightarrow{BK}$.
Similar calculations show that 
\begin{equation}
\left<\overrightarrow{AM},\overrightarrow{ML}\right>_{A}
=t^{2}\;\left<\overrightarrow{AB},\overrightarrow{CD}\right>_{A}
=t^{2}\;\left<\overrightarrow{AB},\overrightarrow{BK}\right>_{A}
=t^{2}\;\left<\overrightarrow{BK},\overrightarrow{AB}\right>_{A},\label{eq:DT11}
\end{equation}
Thus, if we plug Equations (\ref{eq:DT10}), (\ref{eq:DT14}), and (\ref{eq:DT11}) into Equation (\ref{eq:DT9}) and use (\ref{eq:AXIOMc}), we get 
\begin{align}
||\overrightarrow{AL}||_{A}^{2}&
=t^{2}\;\left<\overrightarrow{AB},\overrightarrow{AB}\right>_{A}+2t^{2}\;\left<\overrightarrow{AB},\overrightarrow{BK}\right>_{A}+t^{2}\;\left<\overrightarrow{BK},\overrightarrow{BK}\right>_{A}\nonumber\\
&=t^{2}\;\left<\overrightarrow{AB}+_{A}\overrightarrow{BK},\overrightarrow{AB}+_{A}\overrightarrow{BK}\right>_{A}
=t^{2}\;\left<\overrightarrow{AK},\overrightarrow{AK}\right>_{A}
=t^{2}\;||\overrightarrow{AK}||_{A}^{2},\nonumber
\end{align}
where the last equality comes from Definition \ref{def:MeasureOf-any-arrow}.
Taking the positive square root of both sides in the preceding Equation yields 
$||\overrightarrow{AL}||_{A}=|t|\;||\overrightarrow{AK}||_{A}.$
We conclude from the preceding calculation that  
\begin{equation}
|t|\;||\overrightarrow{AK}||_{A}=_{A}||\overrightarrow{AH}||_{A}=||\overrightarrow{AL}||_{A}.\label{eq:DT17}
\end{equation}
By the Equations (\ref{eq:A01}) and (\ref{eq:DT17}) and the fact that $(t)\overrightarrow{AK}\;\Re\;\overrightarrow{AH}$, we have
\begin{equation}
\frac{\left<\overrightarrow{AH},\overrightarrow{AL}\right>_{A}}{||\overrightarrow{AH}||_{A}\;||\overrightarrow{AL}||_{A}} =\frac{\left<(t)\;\overrightarrow{AK},\overrightarrow{AL}\right>_{A}}{|t|^{2}\;||\overrightarrow{AK}||_{A}^{2}}=\frac{t\;\left<\overrightarrow{AK},\overrightarrow{AL}\right>_{A}}{|t|^{2}\;||\overrightarrow{AK}||_{A}^{2}}.\label{eq:DT18}
\end{equation}
To simplify the numerator of the right term in Equation (\ref{eq:DT18}) we use
Definition \ref{def:ARROWSADDITIONHEADTAIL} and Equation (\ref{eq:AXIOMc}) as follows:
\begin{align}
\left<\overrightarrow{AK},\overrightarrow{AL}\right>_{A}&
=\left<\overrightarrow{AB}+_{A}\overrightarrow{BK},\overrightarrow{AM}+_{A}\overrightarrow{ML}\right>_{A}\nonumber\\
&=\left<\overrightarrow{AB},\overrightarrow{AM}\right>_{A}+\left<\overrightarrow{AB},\overrightarrow{ML}\right>_{A}+\left<\overrightarrow{BK},\overrightarrow{AM}\right>_{A}+\left<\overrightarrow{BK},\overrightarrow{ML}\right>_{A}.\label{eq:sun}
\end{align}
Equations (\ref{eq:A01}) and (\ref{eq:DT3}) imply that
\begin{align}
\left<\overrightarrow{AB},\overrightarrow{AM}\right>_{A}=\left<\overrightarrow{AB},(t)\;\overrightarrow{AB}\right>_{A}=t\;\left<\overrightarrow{AB},\overrightarrow{AB}\right>_{A}.\label{eq:DT21}
\end{align}
Since $\overrightarrow{ML}\;\Re\;(t)\;\overrightarrow{CD}$, an application of Axiom 4 shows that
\begin{equation}
\left<\overrightarrow{AB},\overrightarrow{ML}\right>_{A}=\left<\overrightarrow{AB},(t)\;\overrightarrow{CD}\right>_{A}.\label{eq:tem}
\end{equation}
Furthermore, we have $\overrightarrow{CD}\;\Re\;\overrightarrow{BK}$.
It follows by Lemma \ref{lem:AB R CD -> tAB R tCD} that 
$(t)\;\overrightarrow{CD}\;\Re\;(t)\;\overrightarrow{BK}$.
Thus Equation (\ref{eq:tem}) is equivalent to
\begin{equation}
\left<\overrightarrow{AB},\overrightarrow{ML}\right>_{A}=\left<\overrightarrow{AB},(t)\;\overrightarrow{BK}\right>_{A}=t\;\left<\overrightarrow{AB},\overrightarrow{BK}\right>_{A}.\label{eq:DT25}
\end{equation}
Similarly, we find that
\begin{equation}
\left<\overrightarrow{BK},\overrightarrow{ML}\right>_{A}=t\;\left<\overrightarrow{BK},\overrightarrow{BK}\right>_{A}.\label{eq:DT28}
\end{equation}
Using Equation (\ref{eq:A01}) and noticing that $(t)\;\overrightarrow{AB}=_{A}\overrightarrow{AM}$, we can write
\begin{equation}
\left<\overrightarrow{BK},\overrightarrow{AM}\right>_{A}=\left<\overrightarrow{BK},(t)\;\overrightarrow{AB}\right>_{A}=t\;\left<\overrightarrow{BK},\overrightarrow{AB}\right>_{A}.\label{eq:DT27}
\end{equation}
Now if we plug Equations (\ref{eq:DT21}), (\ref{eq:DT25}), (\ref{eq:DT28}), and (\ref{eq:DT27}) into Equation (\ref{eq:sun}), we find that
\[
\left<\overrightarrow{AK},\overrightarrow{AL}\right>_{A}=t\;\left(\left<\overrightarrow{AB},\overrightarrow{AB}\right>_{A}+\left<\overrightarrow{AB},\overrightarrow{BK}\right>_{A}+\left<\overrightarrow{BK},\overrightarrow{AB}\right>_{A}+\left<\overrightarrow{BK},\overrightarrow{BK}\right>_{A}\right),
\]
which can be simplified (via Definitions \ref{def:ARROWSADDITIONHEADTAIL} and  \ref{def:MeasureOf-any-arrow} and Equation (\ref{eq:AXIOMc})) to
\begin{equation}
\left<\overrightarrow{AK},\overrightarrow{AL}\right>_{A}=t\;||\overrightarrow{AK}||_{A}^{2}.\label{eq:DT30}
\end{equation}
Now by Equations (\ref{eq:DT18}) and (\ref{eq:DT30}) we obtain 
\begin{align}
\frac{\left<\overrightarrow{AH},\overrightarrow{AL}\right>_{A}}{||\overrightarrow{AH}||_{A}\;||\overrightarrow{AL}||_{A}}& =\frac{t\;\left<\overrightarrow{AK},\overrightarrow{AL}\right>_{A}}{|t|^{2}\;||\overrightarrow{AK}||_{A}^{2}}=\frac{t^{2}\;||\overrightarrow{AK}||_{A}^{2}}{|t|^{2}\;||\overrightarrow{AK}||_{A}^{2}}=1.\label{eq:tom}
\end{align}
 Definition \ref{def:RelatioOfArrowOnline}, along with Equations (\ref{eq:DT17}) and (\ref{eq:tom}), shows that $\overrightarrow{AH}\;\Re\;\overrightarrow{AL}$ as desired.

\medskip
(2) By Definition \ref{def:Vector-Scalar Multiplication} we have
$1\;[\overrightarrow{AB}]=_{V}[(1)\;\overrightarrow{AB}]$. Thus we need to show that
$\overrightarrow{AB}\;\Re\;(1)\;\overrightarrow{AB}$. By
Theorem \ref{thm:1 AB=00003DAB} we have $(1)\;\overrightarrow{AB}=_{A}\overrightarrow{AB}$,
and since the relation $\Re$ is reflexive it follows that $\overrightarrow{AB}\;\Re\;(1)\;\overrightarrow{AB}$.
\end{proof}

\begin{rem}
Axioms 1, 2, and 4 show that $\left< -, -\right>_{A}$ induces an inner product $\left< -, -\right>_{V}$ on $\mathcal{P}_v$, where
\[
\left<[\overrightarrow{AB}],[\overrightarrow{CD}]\right>_{V}:= \left<\overrightarrow{AB}, \overrightarrow{CD}\right>_{A}.
\]
\end{rem}

\section{Applications of Arrow Spaces to Affine Geometry}
\label{sectionaffinegeomex}
We close this article by showing how the structure of an arrow space provides a useful approach to affine geometry by solving two 
problems using the constructions that we have built so far. We start
with the following theorem which defines the existence of a projection of a given point onto a given line.
\begin{thm}
\label{thm:POINT.LINE}Let $O,G$ be two distinct points. Let $l_{OG}$
be the line containing the points $O$ and $G$. Given any
point $P\notin l_{OG}$, there exists a unique point $W\in l_{OG}$
such that 
\begin{equation}
\left<\overrightarrow{WO},\overrightarrow{WP}\right>_{A}=0.\label{eq:PERPEND}
\end{equation}
\begin{figure}[ht]
\includegraphics[scale=0.45]{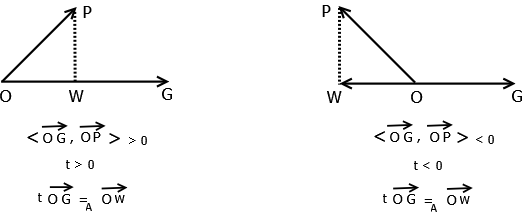}
\caption{\label{fig:WO.WP}An illustration of Theorem \ref{thm:POINT.LINE}, 
the existence of a point $W$ where $\left<\protect\overrightarrow{WO},\protect\overrightarrow{WP}\right>_{A}=0.$}
\end{figure}
\end{thm}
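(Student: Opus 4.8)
The plan is to coordinatize the line and turn the perpendicularity condition into a single scalar equation. Since $W\in l_{OG}$, Definition \ref{def:A LINE} supplies a scalar $t\in\mathbb{R}$ with $\overrightarrow{OW}=_{A}(t)\;\overrightarrow{OG}$; by Theorem \ref{thm:aABbAB} this $t$ is unique to $W$, and Axiom 3 makes the assignment $W\leftrightarrow t$ a bijection, so exhibiting a unique admissible value of $t$ is the same as exhibiting a unique $W$. Using Definition \ref{def:(-)Minuse.arrowAB} I would then write $\overrightarrow{WO}=_{A}-((t)\;\overrightarrow{OG})$ and, since the head of $\overrightarrow{WO}$ is $O$, $\overrightarrow{WP}=_{A}\overrightarrow{WO}+_{A}\overrightarrow{OP}$.

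Next I would expand the pre-inner product into a quadratic in $t$. By the bilinearity relation (\ref{eq:AXIOMc}),
\[
\left<\overrightarrow{WO},\overrightarrow{WP}\right>_{A}=\left<\overrightarrow{WO},\overrightarrow{WO}\right>_{A}+\left<\overrightarrow{WO},\overrightarrow{OP}\right>_{A},
\]
and applying (\ref{eq:A01}), (\ref{eq:A01s}), (\ref{eq:A01-1}) together with Definition \ref{def:MeasureOf-any-arrow} turns the right side into
\[
\left<\overrightarrow{WO},\overrightarrow{WP}\right>_{A}=t^{2}\;||\overrightarrow{OG}||_{A}^{2}-t\;\left<\overrightarrow{OG},\overrightarrow{OP}\right>_{A}.
\]
Because $O\neq_{P}G$, Lemma \ref{lem:||AB||=00003D0 IFF} gives $||\overrightarrow{OG}||_{A}\neq0$, so the equation $\left<\overrightarrow{WO},\overrightarrow{WP}\right>_{A}=0$ factors as $t\bigl(t\;||\overrightarrow{OG}||_{A}^{2}-\left<\overrightarrow{OG},\overrightarrow{OP}\right>_{A}\bigr)=0$, with the two roots $t=0$ and $t_{0}=\left<\overrightarrow{OG},\overrightarrow{OP}\right>_{A}/||\overrightarrow{OG}||_{A}^{2}$.

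The main obstacle is the root $t=0$. It corresponds to $W=_{P}O$, where $\overrightarrow{WO}=_{A}\overrightarrow{OO}$ collapses to a zero-length arrow and $\left<\overrightarrow{OO},\overrightarrow{OP}\right>_{A}=0$ holds automatically by Proposition \ref{prop:AACDZERO}; thus the bare equation always admits $W=O$ as a trivial solution whose perpendicularity is vacuous. To recover the asserted \emph{unique} $W$ I would interpret the statement as locating the point at which $\overrightarrow{WO}$ is a genuine nonzero arrow meeting $\overrightarrow{WP}$ at a right angle, i.e.\ $W\neq_{P}O$. Since $P\notin l_{OG}$ forces $W\neq_{P}P$ for every $W$ on the line, the arrow $\overrightarrow{WP}$ is never degenerate, so the only nondegeneracy requirement is $t\neq0$; this isolates $t_{0}$ as the unique admissible root, and pushing $t_{0}$ back through the bijection $W\leftrightarrow t$ produces the unique point $W\in l_{OG}$ with $\left<\overrightarrow{WO},\overrightarrow{WP}\right>_{A}=0$. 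I would flag that this $W$ degenerates to $O$ precisely when $\left<\overrightarrow{OG},\overrightarrow{OP}\right>_{A}=0$ (the tangency configuration, where the line meets the Thales circle on $OP$ only at $O$), the one case in which the $t=0$ root must be confronted directly rather than dismissed.
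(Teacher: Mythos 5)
Your proposal is correct and follows essentially the same route as the paper's proof: parametrize $W$ via $\overrightarrow{OW}=_{A}(t)\;\overrightarrow{OG}$, expand $\left<\overrightarrow{WO},\overrightarrow{WP}\right>_{A}$ bilinearly into $t^{2}\,||\overrightarrow{OG}||_{A}^{2}-t\,\left<\overrightarrow{OG},\overrightarrow{OP}\right>_{A}$, and identify the root $t_{0}=\left<\overrightarrow{OG},\overrightarrow{OP}\right>_{A}/||\overrightarrow{OG}||_{A}^{2}$. If anything you are more scrupulous than the paper, which also notes the $t=0$ root but dismisses it as holding trivially without reconciling it with the claimed uniqueness, whereas you make explicit that $W=_{P}O$ always satisfies Equation (\ref{eq:PERPEND}) by Proposition \ref{prop:AACDZERO}, so that uniqueness is literally true only once this degenerate solution is excluded (or in the coincidence case $\left<\overrightarrow{OG},\overrightarrow{OP}\right>_{A}=0$, where the two roots merge).
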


\begin{proof}
Let $O,G$ be two given points with $O\neq_{P}G$. Let $P$ be any point such that $P\notin l_{OG}$.
We want to find $t\in\mathbb{R}$ where 
\begin{equation}
(t)\;\overrightarrow{OG}=_{A}\overrightarrow{OW},\label{eq:PERPEND0}
\end{equation}
 for some $W\in l_{OG}$ which satisfies Equation (\ref{eq:PERPEND}).
See Figure \ref{fig:WO.WP}. By using Definition \ref{def:(-)Minuse.arrowAB}
we can rewrite Equation (\ref{eq:PERPEND0})  as 
$-((t)\;\overrightarrow{OG})=_{A}\overrightarrow{WO}$.
This means that Equation (\ref{eq:PERPEND}) is equivalent to
\begin{equation}
\left<-((t)\;\overrightarrow{OG}),\overrightarrow{WP}\right>_{A}=0.\label{eq:PERPEND1}
\end{equation}
If $t=0$, then it follows by Definition \ref{def:(Scalar-Multiplication-of} and Proposition \ref{prop:AACDZERO} that $O = W$ and the preceding equation
holds trivially. For  $t\neq0$, Definition \ref{def:ARROWSADDITIONHEADTAIL}, Equations (\ref{eq:A01}) and (\ref{eq:AXIOMc}) imply that
\begin{align*}
0 &= \left<-((t)\;\overrightarrow{OG}),\overrightarrow{WP}\right>_{A}
=-t\;\left<\overrightarrow{OG},\overrightarrow{WO}+_{A}\overrightarrow{OP}\right>_{A}\\
&=-t\;\left<\overrightarrow{OG},\overrightarrow{WO}\right>_{A}-t\;\left<\overrightarrow{OG},\overrightarrow{OP}\right>_{A}\\
&=-t\;\left<\overrightarrow{OG},-((t)\;\overrightarrow{OG})\right>_{A}-t\;\left<\overrightarrow{OG},\overrightarrow{OP}\right>_{A}\\
&=t^{2}\;\left<\overrightarrow{OG},\overrightarrow{OG}\right>_{A}-t\;\left<\overrightarrow{OG},\overrightarrow{OP}\right>_{A}=0.
\end{align*}
Solving the preceding equation for $t$ yields
\begin{equation}
t=\frac{\left<\overrightarrow{OG},\overrightarrow{OP}\right>_{A}}{\left<\overrightarrow{OG},\overrightarrow{OG}\right>_{A}}.\label{eq:PERPEND5}
\end{equation}
The two quantities  $\left<\overrightarrow{OG},\overrightarrow{OP}\right>_{A}$ and $\left<\overrightarrow{OG},\overrightarrow{OG}\right>_{A}$ are uniquely determined as $O,\;G$, and $P$ are fixed. Hence $t$ in Equation
(\ref{eq:PERPEND5}) is unique. Therefore, Equation (\ref{eq:PERPEND0}) implies
that there exists a unique point $W\in l_{OG}$
such that Equation (\ref{eq:PERPEND}) holds.
\end{proof}

Theorem \ref{thm:POINT.LINE}  allows for a geometric proof of the arrow space Cauchy-Schwartz analog.
\begin{thm}
\label{csarrow}
 (Cauchy-Schwartz Inequality)
 Given $\overrightarrow{OG}$ and $\overrightarrow{AB}$ in $\mathcal{P}_A$,
 \begin{equation}
 \label{cseq1}
 \left<\overrightarrow{OG}, \overrightarrow{AB}\right>^2_A\leq \left<\overrightarrow{OG}, \overrightarrow{OG}\right>_{A}
 \left<\overrightarrow{AB}, \overrightarrow{AB}\right>_{A},
 \end{equation}
 with equality holding in (\ref{cseq1}) if and only if $\overrightarrow{AB} =_{A} (s)\overrightarrow{OG}$ for some real number $s$.
 \end{thm}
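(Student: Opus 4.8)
The plan is to reduce (\ref{cseq1}) to the case of two arrows with the common tail $O$ and then extract the inequality from the Pythagorean splitting furnished by Theorem \ref{thm:POINT.LINE}. First I would clear the degenerate cases: if $O=_{P}G$ or $A=_{P}B$, then Proposition \ref{prop:AACDZERO} makes every pre-inner product in (\ref{cseq1}) vanish, so both sides are $0$ and the inequality holds with equality. Assume henceforth $O\neq_{P}G$ and $A\neq_{P}B$, so that $||\overrightarrow{OG}||_{A}\neq 0$ by Lemma \ref{lem:||AB||=00003D0 IFF}. Next I translate $\overrightarrow{AB}$ to a representative based at $O$: by Axiom 5 there is a unique point $P$ with $\overrightarrow{AB}\;\Re\;\overrightarrow{OP}$. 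Using $\overrightarrow{OG}\;\Re\;\overrightarrow{OG}$ and Axiom 4 gives $\left<\overrightarrow{OG},\overrightarrow{AB}\right>_{A}=\left<\overrightarrow{OG},\overrightarrow{OP}\right>_{A}$, while Definition \ref{def:RelatioOfArrowOnline} gives $||\overrightarrow{AB}||_{A}=||\overrightarrow{OP}||_{A}$ and hence $\left<\overrightarrow{AB},\overrightarrow{AB}\right>_{A}=\left<\overrightarrow{OP},\overrightarrow{OP}\right>_{A}$. Thus it suffices to prove (\ref{cseq1}) for the pair $(\overrightarrow{OG},\overrightarrow{OP})$.

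For the geometric heart I split on whether $P$ lies on $l_{OG}$. If $P\in l_{OG}$, then $\overrightarrow{OP}=_{A}(s)\overrightarrow{OG}$ for some $s$ by Definition \ref{def:A LINE}, and Equation (\ref{eq:A01}) gives $\left<\overrightarrow{OG},\overrightarrow{OP}\right>_{A}^{2}=s^{2}\left<\overrightarrow{OG},\overrightarrow{OG}\right>_{A}^{2}=\left<\overrightarrow{OG},\overrightarrow{OG}\right>_{A}\left<\overrightarrow{OP},\overrightarrow{OP}\right>_{A}$, i.e.\ equality. If $P\notin l_{OG}$, I invoke Theorem \ref{thm:POINT.LINE} to get the unique $W\in l_{OG}$ with $\left<\overrightarrow{WO},\overrightarrow{WP}\right>_{A}=0$ and, from its proof, $\overrightarrow{OW}=_{A}(t)\overrightarrow{OG}$ with $t=\left<\overrightarrow{OG},\overrightarrow{OP}\right>_{A}/\left<\overrightarrow{OG},\overrightarrow{OG}\right>_{A}$. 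Writing $\overrightarrow{OP}=_{A}\overrightarrow{OW}+_{A}\overrightarrow{WP}$ and expanding via Equation (\ref{eq:AXIOMc}), the cross term vanishes because $\left<\overrightarrow{OW},\overrightarrow{WP}\right>_{A}=-\left<\overrightarrow{WO},\overrightarrow{WP}\right>_{A}=0$ by Definition \ref{def:(-)Minuse.arrowAB} and Equation (\ref{eq:A01-1a}); this yields the Pythagorean identity $\left<\overrightarrow{OP},\overrightarrow{OP}\right>_{A}=||\overrightarrow{OW}||_{A}^{2}+||\overrightarrow{WP}||_{A}^{2}$. Since $||\overrightarrow{OW}||_{A}^{2}=t^{2}||\overrightarrow{OG}||_{A}^{2}=\left<\overrightarrow{OG},\overrightarrow{OP}\right>_{A}^{2}/\left<\overrightarrow{OG},\overrightarrow{OG}\right>_{A}$ by Lemma \ref{lem:tAB} and $||\overrightarrow{WP}||_{A}^{2}\ge 0$ by Axiom 1, clearing denominators gives exactly (\ref{cseq1}).

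For the equality clause, the Pythagorean identity shows equality forces $||\overrightarrow{WP}||_{A}^{2}=0$, hence $W=_{P}P$ by Lemma \ref{lem:||AB||=00003D0 IFF}; but $W\in l_{OG}$ contradicts $P\notin l_{OG}$. So equality occurs exactly in the collinear case $\overrightarrow{OP}=_{A}(s)\overrightarrow{OG}$, which step two above translates back to a statement about $\overrightarrow{AB}$. I expect the main obstacle to be this last translation: the characterization that genuinely emerges is collinearity of the common-tail representative $\overrightarrow{OP}$ with $\overrightarrow{OG}$, whereas the theorem writes $\overrightarrow{AB}=_{A}(s)\overrightarrow{OG}$, a condition that by Definition \ref{def:(Scalar-Multiplication-of} already presupposes $A=_{P}O$. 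I would therefore state the equality condition through the representative based at $O$ (equivalently, via the direction equality of Definition \ref{arrowdirdef}), or restrict to the convention $A=_{P}O$ under which both formulations agree, so that the \emph{if and only if} is literally correct.
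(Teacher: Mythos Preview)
Your proposal is correct and follows essentially the same route as the paper: translate $\overrightarrow{AB}$ to a representative $\overrightarrow{OP}$ at $O$ via Axiom 5 and Axiom 4, split on whether $P\in l_{OG}$, and in the non-collinear case apply Theorem \ref{thm:POINT.LINE} together with the Pythagorean decomposition to obtain the inequality. Your treatment is in fact more thorough than the paper's in two respects: you justify the Pythagorean identity explicitly (the paper simply invokes ``the Pythagorean theorem''), and you actually argue the equality clause, which the paper's proof does not address; your observation that the stated condition $\overrightarrow{AB}=_{A}(s)\overrightarrow{OG}$ forces $A=_{P}O$ by Definition \ref{def:(Scalar-Multiplication-of} is a legitimate critique of the theorem's formulation rather than a defect in your argument.
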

 \begin{proof}
 If $O =_{P} G$, the result is trivially true. So assume $O \neq_{P} G$ and let $l_{OG}$ be the line through the points $O$ and $G$.
 By Axiom 5, set $\overrightarrow{OP}$ to be the unique arrow such that $\overrightarrow{AB}\,\,\Re\,\,\overrightarrow{OP}$. 
 Since Axiom 4 implies that  $\left<\overrightarrow{OG}, \overrightarrow{AB}\right>_A = \left<\overrightarrow{OG}, \overrightarrow{OP}\right>_A$, we see
(\ref{cseq1}) is equivalent to
\begin{equation}
\label{cseq2}
 \left<\overrightarrow{OG}, \overrightarrow{OP}\right>^2_A\leq \left<\overrightarrow{OG}, \overrightarrow{OG}\right>_{A}
 \left<\overrightarrow{OP}, \overrightarrow{OP}\right>_{A}.
 \end{equation}
 Thus it suffices to verify (\ref{cseq2}).
 
 \medskip
 If $P \in l_{OG}$, then $\overrightarrow{OP} = (s)\overrightarrow{OG}$ for some $s\in \mathbb{R}$. In this case
 \begin{align*}
 \left<\overrightarrow{OG}, \overrightarrow{OP}\right>^2_A&= \left<\overrightarrow{OG}, (s)\overrightarrow{OG}\right>^2_A 
 = s^2 \left<\overrightarrow{OG}, \overrightarrow{OG}\right>^2_{A}\\
 &=\left<\overrightarrow{OG}, \overrightarrow{OG}\right>_A\left<(s)\overrightarrow{OG}, (s)\overrightarrow{OG}\right>_{A}\\
 &=\left<\overrightarrow{OG}, \overrightarrow{OG}\right>_A\left<\overrightarrow{OP}, \overrightarrow{OP}\right>_{A}.
 \end{align*}
 
 \medskip
Now assume $P\notin  l_{OG}$. Then by Theorem \ref{thm:POINT.LINE} there is a unique point $W \in l_{OG}$, namely
 $\overrightarrow{OW} = (t)\overrightarrow{OG}$ with 
 $t=\frac{\left<\overrightarrow{OG},\overrightarrow{OP}\right>_{A}}{\left<\overrightarrow{OG},\overrightarrow{OG}\right>_{A}}$,
 such that $\left<\overrightarrow{WO}, \overrightarrow{WP}\right>_{A} = 0$.  In other words, 
 Triangle OPW is a right triangle with hypotenuse $\overrightarrow{OP}$; see Figure \ref{fig:WO.WP}. The Pythagorean theorem implies that
 \begin{align*}
 \left<\overrightarrow{OP},\overrightarrow{OP}\right>_{A} &= \left<\overrightarrow{OW},\overrightarrow{OW}\right>_{A} 
 + \left<\overrightarrow{WP},\overrightarrow{WP}\right>_{A}\\
 &\geq \left<\overrightarrow{OW},\overrightarrow{OW}\right>_{A} = \left<(t)\overrightarrow{OG},(t)\overrightarrow{OG}\right>_{A}\\
 &= t^2\left<\overrightarrow{OG},\overrightarrow{OG}\right>_{A} 
 = \frac{\left<\overrightarrow{OG},\overrightarrow{OP}\right>_{A}^2}{\left<\overrightarrow{OG},\overrightarrow{OG}\right>_{A}},
 \end{align*}
 which after multiplication by $\left<\overrightarrow{OG},\overrightarrow{OG}\right>_{A}$ is equivalent to Inequality (\ref{cseq2}).
 \end{proof}

Next we define the barycentric coordinates of a point $M$ in $\mathcal{P}$, a concept crucial to the definition of affine maps (see Definitions 2.2 and 2.6 of \cite{key-20}).
To do so we fix an origin $O$ in $\mathcal{P}$ and introduce the
following definition
\begin{defn}\label{defbar}
Let $\left\{ P_{i}\right\} _{i=1}^{i=n}$ be a set of points in $\mathcal{P}$
such that $[\overrightarrow{OM}]=\sum_{i=1}^{i=n}\;\lambda_{i}\;[\overrightarrow{OP_{i}}]$,
where $\sum_{i=1}^{i=n}\;\lambda_{i}=1$. We write $M:=_{P}\sum_{i=1}^{i=n}\;\lambda_{i}\;P_{i}$
and call the real numbers $\lambda_{i}\geq0,\;i=1,...,\;n$ the barycentric
coordinates of a point $M$.
\end{defn}

To show that Definition \ref{defbar} is well defined and independent of the chosen origin, we need the following theorem.
\begin{thm}
\label{thm:unq.end.poont}Let $\{P_{i}\}_{i=1}^{n}$ be a family of
distinct points in $\mathcal{P}$. Let $O$ be a coordinate free origin
and $\{\lambda_{i}\}_{i=1}^{n}$ be a family of real numbers such
that $\sum_{i=1}^{n}\lambda_{i}=1$. There exists a unique point
$M$ such that $\sum_{i=1}^{n}[(\lambda_{i})\;\overrightarrow{OP_{i}}]=_{V}[\overrightarrow{OM}]$.
Moreover, the point $M$ is independent from the choice of $O$. That is, if $\tilde{O}$ is any other point and if
\begin{equation}
\sum_{i=1}^{n}[(\lambda_{i})\;\overrightarrow{\tilde{O}P_{i}}]=_{V}[\overrightarrow{\tilde{O}L}],\label{eq:SUM}
\end{equation}
then we must have $L=_{P}M$. See Figure 8.2.
\end{thm}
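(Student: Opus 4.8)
The plan is to treat the two assertions separately: first establish existence and uniqueness of $M$ once the origin $O$ is fixed, and then transport the whole construction to a second origin $\tilde O$ by feeding the single translation identity $\overrightarrow{\tilde O P_i} =_A \overrightarrow{\tilde O O} +_A \overrightarrow{OP_i}$ through the vector-space laws proved in Section \ref{sec:7.The-Equivalence-C}, where the barycentric constraint $\sum_{i=1}^n \lambda_i = 1$ will be the decisive ingredient. For the fixed-origin part, I would observe that $v := \sum_{i=1}^n [(\lambda_i)\overrightarrow{OP_i}]$ is a well-defined element of $\mathcal{P}_v$, since $\mathcal{P}_v$ is a vector space. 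If $v$ is the zero vector, take $M := O$; otherwise apply Axiom 5 to a representative of $v$ with tail $O$ to obtain the unique head $M$ with $[\overrightarrow{OM}] =_V v$. In either case $M$ is the unique point whose tail-at-$O$ arrow $\overrightarrow{OM}$ represents $v$, which settles existence and uniqueness.

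For the independence, I would first promote the arrow identity $\overrightarrow{\tilde O P_i} =_A \overrightarrow{\tilde O O} +_A \overrightarrow{OP_i}$ (Definition \ref{def:ARROWSADDITIONHEADTAIL}) to the vector level: using $O$ as the common point in Definition \ref{def:VectorAddition} gives $[\overrightarrow{\tilde O P_i}] =_V [\overrightarrow{\tilde O O}] +_V [\overrightarrow{OP_i}]$. Scalar-multiplying by $\lambda_i$ and invoking the distributive law (Theorem \ref{thm:Hussmade}(1)) together with Definition \ref{def:Vector-Scalar Multiplication} would yield $[(\lambda_i)\overrightarrow{\tilde O P_i}] =_V [(\lambda_i)\overrightarrow{\tilde O O}] +_V [(\lambda_i)\overrightarrow{OP_i}]$. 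Summing over $i$ and regrouping by the commutativity (Theorem \ref{thm:poi}) and associativity of $+_V$ produces
\[
\sum_{i=1}^n [(\lambda_i)\overrightarrow{\tilde O P_i}] =_V \Big(\sum_{i=1}^n [(\lambda_i)\overrightarrow{\tilde O O}]\Big) +_V \Big(\sum_{i=1}^n [(\lambda_i)\overrightarrow{OP_i}]\Big).
\]

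The heart of the argument is the first group. Repeated use of the distributive law $(s+t)\,[\overrightarrow{AB}] =_V s\,[\overrightarrow{AB}] +_V t\,[\overrightarrow{AB}]$ (Theorem \ref{thm:iaddedthislabel}(2)) collapses $\sum_{i=1}^n [(\lambda_i)\overrightarrow{\tilde O O}]$ to $\big(\sum_{i=1}^n \lambda_i\big)[\overrightarrow{\tilde O O}]$, and since $\sum_{i=1}^n \lambda_i = 1$ and $1\,[\overrightarrow{AB}] =_V [\overrightarrow{AB}]$ (Theorem \ref{thm:Hussmade}(2)) this is exactly $[\overrightarrow{\tilde O O}]$. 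The second group equals $[\overrightarrow{OM}]$ by the definition of $M$. Hence the total sum is $[\overrightarrow{\tilde O O}] +_V [\overrightarrow{OM}]$, which, by Definition \ref{def:VectorAddition} with common point $O$, equals $[\overrightarrow{\tilde O M}]$. Comparing with Equation (\ref{eq:SUM}) gives $[\overrightarrow{\tilde O L}] =_V [\overrightarrow{\tilde O M}]$, i.e. $\overrightarrow{\tilde O L}\;\Re\;\overrightarrow{\tilde O M}$; as these two arrows share the tail $\tilde O$, the uniqueness clause of Axiom 5 forces $L =_P M$ (in the degenerate case $\tilde O =_P M$ one instead invokes Proposition \ref{prop:A=00003DB,C=00003DD}).

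The main obstacle I anticipate is not conceptual but bookkeeping: carrying out the regrouping of the double sum and the telescoping of the translation terms strictly within the vector-space axioms already established, and verifying that every appeal to Axiom 5 is legitimate by separating out the zero-arrow cases (where Axiom 5 does not apply directly but Proposition \ref{prop:A=00003DB,C=00003DD} does). It is worth emphasizing that the barycentric hypothesis $\sum_{i=1}^n \lambda_i = 1$ is precisely what reduces the origin-dependent contribution to a single copy of $[\overrightarrow{\tilde O O}]$, so the independence fails without it.
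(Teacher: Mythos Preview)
Your proposal is correct and follows essentially the same route as the paper: compute the barycentric sum, split each arrow through the other origin, use $\sum_i\lambda_i=1$ to collapse the translation term, and conclude that the two tail-at-origin arrows are $\Re$-related, hence have the same head. The only cosmetic differences are that the paper decomposes $\overrightarrow{OP_i}$ through $\tilde O$ (rather than $\overrightarrow{\tilde O P_i}$ through $O$) and finishes with Proposition~\ref{lem:ABAD0} instead of the uniqueness clause of Axiom~5; both choices are equivalent.
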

\begin{figure}[ht]
\label{bartot}
\includegraphics[width=3.0in,height=4.0in]{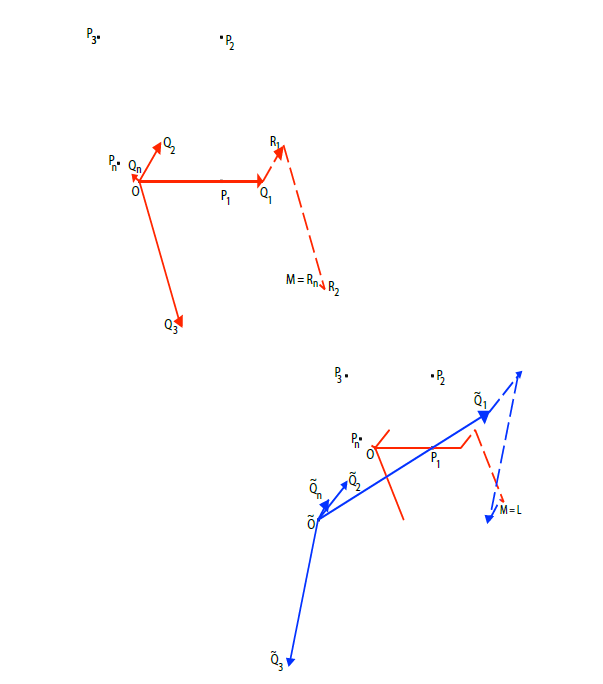}
\caption{The top figure with the red arrows illustrates the barycenter from $O$ while the bottom figure with blue arrows represents the barycenter from $\tilde{O}$.}
\end{figure}

\begin{proof}
Let $(\lambda_{i})\;\overrightarrow{OP_{i}}=_{A}\overrightarrow{OQ_{i}}$, 
$1\leq i\leq n$, for some points $\{Q_{i}\}_{i=1}^{n}$. The existence and uniqueness
of a point $M$, such that 
\begin{equation}
\sum_{i=1}^{n}[(\lambda_{i})\;\overrightarrow{OP_{i}}]=_{V}\sum_{i=1}^{n}(\lambda_{i})[\overrightarrow{OP_{i}}]=_{V}\sum_{i=1}^{n}[\overrightarrow{OQ_{i}}]=_{V}[\overrightarrow{OM}],\label{eq:SUMA}
\end{equation}
follows from $n-1$ applications Definition \ref{def:VectorAddition} as follows
\[
(n-1)\;\text{times}\;\begin{cases}
[\overrightarrow{OQ_{1}}]+_{V}[\overrightarrow{OQ_{2}}]=_{V}[\overrightarrow{OQ_{1}}+_{A}\overrightarrow{Q_{1}R_{1}}]=_{V}[\overrightarrow{OR_{1}}],\\{}
[\overrightarrow{OR_{1}}]+_{V}[\overrightarrow{OQ_{3}}]=_{V}[\overrightarrow{OR_{1}}+_{A}\overrightarrow{R_{1}R_{2}}]=_{V}[\overrightarrow{OR_{2}}],\\
:\\{}
[\overrightarrow{OR_{n-2}}]+_{V}[\overrightarrow{OQ_{n}}]=_{V}[\overrightarrow{OR_{n-2}}+_{A}\overrightarrow{R_{n-2}M}]=_{V}[\overrightarrow{OM}],
\end{cases}
\]
where $\{R_{i}\}_{i=1}^{n-1}$ (we put $M=R_{n-1}$) are the unique points that we get when we apply Axiom 5 in Definition \ref{def:VectorAddition}. See Figure 8.3.
\begin{figure}[ht]
\label{fig:bary}
\includegraphics[scale=0.5]{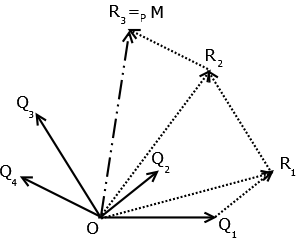}
\caption{$\sum_{i=1}^{n}[(\lambda_{i})\;\protect\overrightarrow{OP_{i}}]=_{V}\sum_{i=1}^{n}[\protect\overrightarrow{OQ_{i}}]=_{V}[\protect\overrightarrow{OM}],\;n=4$}
\end{figure}

\medskip
To show that $M$ is independent of the choice of $O$  notice that by Definitions \ref{def:ARROWSADDITIONHEADTAIL} and Theorem \ref{thm:Hussmade} (1) we have
\begin{equation}
[\overrightarrow{OM}]=_{V}
\sum_{i=1}^{n}\lambda_{i}[\overrightarrow{OP_{i}}]
=_{V}\sum_{i=1}^{n}\lambda_{i}[\overrightarrow{O\tilde{O}}+_{A}\overrightarrow{\tilde{O}P_{i}}]
=_{V}[\overrightarrow{O\tilde{O}}]\sum_{i=1}^{n}\lambda_{i} +_{V}
\sum_{i=1}^{n}\lambda_{i}[\overrightarrow{\tilde{O}P_{i}}],\label{eq:SUMB}
\end{equation}
for any point $\tilde{O}$ other than $O$. Since $\sum_{i=1}^{n}\lambda_{i}=1$, by Theorem \ref{thm:iaddedthislabel}(2) and Equations (\ref{eq:SUMA}) and (\ref{eq:SUMB}) we find that
\begin{equation}
[\overrightarrow{OM}] = _{V}
[\overrightarrow{O\tilde{O}}]+_{V}[\overrightarrow{\tilde{O}L}]=_{V}[\overrightarrow{O\tilde{O}}+_{A}\overrightarrow{\tilde{O}L}]=_{V}[\overrightarrow{OL}],
\end{equation} 
where $[\overrightarrow{\tilde{O}L}]=_{V}\sum_{i=1}^{n}(\lambda_{i})[\overrightarrow{\tilde{O}P_{i}}]$
for some point $L$. This means that $\overrightarrow{OL}\;\Re\;\overrightarrow{OM}$ which  implies by Definition \ref{def:RelatioOfArrowOnline}
and Proposition \ref{lem:ABAD0} that $L=_{P}M$, as desired.
\end{proof}

\end{document}